\definecolor{webgreen}{rgb}{0,.5,0}
\definecolor{webbrown}{rgb}{.6,0,0}
\newtheorem{theorem}{Theorem}
\newtheorem{lemma}[theorem]{Lemma}
\newtheorem{proposition}[theorem]{Proposition}
\newtheorem*{resultA}{Theorem \ref{thm:A}}
\newtheorem*{resultB}{Theorem \ref{thm:B}}
\newtheorem*{resultC}{Theorem \ref{thm:C}}
\newtheorem*{resultD}{Theorem \ref{thm:D}}
\theoremstyle{definition}
\newtheorem{definition}[theorem]{Definition}
\newtheorem{conjecture}[theorem]{Conjecture}
\theoremstyle{remark}
\newtheorem{remark}[theorem]{Remark}
\newcommand{\seqnum}[1]{\href{https://oeis.org/#1}{\underline{#1}}}
\def\modd#1 #2{#1\ \mbox{\rm (mod}\ #2\mbox{\rm )}}
\begin{document}
\title[Sum-free sets generated by the period-$k$-folding sequences]{Sum-free sets generated by the period-$k$-folding sequences and some Sturmian sequences}

\author[J.-P. Allouche]{Jean-Paul Allouche}
\address[J.-P. Allouche]{CNRS, IMJ-PRG,
Sorbonne Universit\'e,
4 Place Jussieu,
F-75252 Paris Cedex 05, France}
\email{jean-paul.allouche@imj-prg.fr}

\author[J. Shallit]{Jeffrey Shallit}
\address[J. Shallit]{School of Computer Science, University of Waterloo,
Waterloo, ON  N2L 3G1, Canada}
\email{shallit@uwaterloo.ca}

\author[Z.-X. Wen]{Zhi-Xiong Wen}
\address[Z.-X. Wen]{School of Mathematics and Statistics, Huazhong University of Science and Technology, Wuhan, 430074, P.R. China.} 
\email{zhi-xiong.wen@hust.edu.cn}

\author[W. Wu]{Wen Wu$^*$}\thanks{$^*$ Wen Wu is the corresponding author.}
\address[W. Wu]{School of Mathematics, South China University of Technology, Guangzhou 510641, P.R. China.\newline
\indent Department of Mathematics and Statistics, P.O. Box 68 (Pietari Kalmin katu 5), FI-00014 University of Helsinki, Finland} 
\email[Corresponding author]{wuwen@scut.edu.cn}

\author[J.-M. Zhang]{Jie-Meng Zhang}
\address[J.-M. Zhang]{School of Science, Wuhan Institute of Technology, Wuhan 430205, P.R. China.} 
\email{zhangjiemeng@wit.edu.cn}

\date{}                                           

\begin{abstract}
First, we show that the sum-free set generated by the 
period-doubling sequence is not $\kappa$-regular for any $\kappa\geq 2$.
Next, we introduce a generalization of the period-doubling sequence,
which we call the {\it period-$k$-folding sequences}.
We show that
the sum-free sets generated by the period-$k$-folding sequences also fail
to be $\kappa$-regular for all $\kappa\geq 2$.
Finally,
we study the sum-free sets generated by Sturmian sequences that begin
with `11', and their difference sequences. 
\end{abstract}

\keywords{sum-free sets, paper-folding sequence, $\kappa$ automatic sequences, Sturmian sequences}
\subjclass[2010]{11B85, 11B13}
\maketitle

\setcounter{tocdepth}{1}
\tableofcontents

\section{Introduction}
\label{one}

A set of integers $S$ is called \emph{sum-free} if $S\cap(S+S)=\emptyset$, where $S+S$ is the set $\{x+y\mid x,y\in S\}$. Equivalently, $S$ is sum-free if
the equation $x+y=z$ has no solutions $x,y,z\in S$.
When we speak of a subset $S\subset \mathbb{N}$,
we always arrange its elements in ascending order
and treat it as an integer sequence. We write $S=(s_n)_{n\geq 1}$.

One can construct an infinite sum-free set from an infinite zero-one sequence using a natural map between $\Sigma$ and $\mathfrak{S}$ introduced by 
Cameron \cite{Pj}, where $\Sigma$ and $\mathfrak{S}$ denote the set of all zero-one sequences and the set of all sum-free sets, respectively.   We explain
this map in Section~\ref{theta-sec}.
Calkin and Finch \cite{Nc} in 1996 showed that this map, denoted by $\theta$, is a bijection. One might expect to be able to characterize a sum-free set
in terms of its corresponding zero-one sequence $\mathbf{t}$. However, this is not 
always easy, even when $\mathbf{t}$ is periodic.

An infinite sum-free set $S$ is said to be (\emph{ultimately}) \emph{periodic} if its difference sequence $(s_{n+1}-s_{n})_{n\geq 1}$ is (ultimately) periodic. Calkin and Finch \cite{Nc} showed that if a sum-free set is (ultimately)
periodic, then the corresponding zero-one sequence is also (ultimately) periodic. Conversely, Cameron also asked whether sum-free sets
corresponding to (ultimately) periodic zero-one sequences are (ultimately)
periodic. This question is still open. With the help of a computer,
Calkin and Finch \cite{Nc} presented some sum-free sets, which correspond to
periodic zero-one sequences, and appear to be aperiodic (the
aperiodicity was checked up
to $10^7$).  Thus far, no proof has been provided
to show whether these particular
sum-free sets are periodic or aperiodic. Calkin and Erd\H{o}s \cite{aperiodic} showed that a class of aperiodic sum-free sets $S$ is incomplete, i.e., $\mathbb{N}\backslash (S+S)$ is an infinite set. Later, Calkin, Finch, and Flowers \cite{difference} introduced
the concept of difference density, which can be used to test whether specific sets are periodic. These tests produced further evidence that certain sets are not ultimately periodic. Payne \cite{GP}
studied the properties of certain sum-free sets over an additive group.

Wen, Zhang, and Wu \cite{WZW} studied sum-free sets
corresponding to certain zero-one automatic sequences, including
the Cantor-like sequences and some substitution sequences. Those sum-free sets were proved to be $2$-regular sequences, which implies that they have
a simple description.   In contrast, in this paper, we find that the sum-free sets corresponding to the period-doubling sequence are {\it not\/}
$\kappa$-regular for any $\kappa\geq 2$. 
\medskip

We now summarize our results. The first result characterizes the sum-free set generated by the period-doubling sequence.

\begin{theorem}\label{thm:A}
Let $(s_n)_{n\geq 1}$ be the sum-free set generated by the period-doubling sequence $\mathbf{p}$ and let $\rho_8$ be the morphism defined
by sending $0\to 833$ and $1\to 86$. Set $d_1=8$ and let $d_n=s_{n+1}-s_n$ for all $n\geq 2$. Then $(d_n)_{n\geq 1}=\rho_8(\mathbf{p})$.  Furthermore,
$(d_n)_{n\geq 1}$ is not $\kappa$-automatic for any $\kappa\geq 2$.
\end{theorem}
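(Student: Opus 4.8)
The plan is to work with the characterization $(d_n)_{n\ge1}=\rho_8(\mathbf{p})$ from the first part of Theorem~\ref{thm:A} and argue by contradiction: suppose $(d_n)$ is $\kappa$-automatic for some $\kappa\ge2$. Since $(d_n)$ takes only the values $3,6,8$ it is bounded, hence $\kappa$-regular, and therefore its summatory function $D(N):=\sum_{n\le N}d_n$ is $\kappa$-regular as well. The first key observation is purely combinatorial: both blocks have digit sum $8+3+3=8+6=14$, while their lengths are $3$ and $2$. Consequently, writing $L_k:=|\rho_8(p_0\cdots p_{k-1})|$ for the length of the first $k$ blocks, one gets $L_k=3k-T(k)$, where $T(k):=\#\{0\le i<k:p_i=1\}$ is the summatory function of the period-doubling sequence, and $D(L_k)=14k$ exactly. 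Moreover the letter $8$ occurs precisely at the block boundaries, so it occurs with bounded gaps, and the gap between consecutive occurrences equals $3$ or $2$ according as the corresponding letter of $\mathbf{p}$ is $0$ or $1$.

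The second step extracts $\mathbf{p}$ from $(d_n)$. Because the set of positions of the letter $8$ is $\kappa$-automatic and syndetic, its increasing enumeration is $\kappa$-synchronized, and by the standard transfer from $\kappa$-synchronized sets to their gap sequences the (bounded) sequence of gaps between consecutive $8$'s is itself $\kappa$-automatic. That gap sequence is exactly the image of $\mathbf{p}$ under $0\mapsto3$, $1\mapsto2$, so $\mathbf{p}$ would be $\kappa$-automatic. But $\mathbf{p}$ is $2$-automatic and not ultimately periodic, so Cobham's theorem forces $2$ and $\kappa$ to be multiplicatively dependent; since $\kappa$ is an integer, $\kappa$ must be a power of $2$. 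By Eilenberg's theorem ($\kappa$- and $\kappa^h$-automaticity coincide) it then suffices to rule out the case $\kappa=2$.

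This base-$2$ case is where I expect the real difficulty. Here the recovery of $\mathbf{p}$ yields no contradiction (it genuinely is $2$-automatic), so one must exploit the non-uniform stretching directly, and I would quantify it through the discrepancy $\epsilon(k):=T(k)-k/3$. Using $p_n=1\iff\nu_2(n+1)\text{ odd}$ one obtains the self-similar recursion $T(k)=\lfloor k/2\rfloor-T(\lfloor k/2\rfloor)$, whence $\epsilon(2m)=-\epsilon(m)$ and $\epsilon(2m{+}1)=-\tfrac13-\epsilon(m)$; evaluating along $k_j=(4^j-1)/3$, for which $\lfloor k_j/2\rfloor=2k_{j-1}$, gives $\epsilon(k_j)=-j/3$, so $\epsilon$ is unbounded. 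Hence $D(N)-\tfrac{21}{4}N$ is unbounded, and — crucially — its large values occur at the block-boundary positions $L_{k_j}=\tfrac{8}{3}k_j+\tfrac{j}{3}$, which sit at scale $4^j$ yet have irregular base-$2$ (equivalently base-$4$) expansions. The plan is to convert this incommensurability into a proof that the $2$-kernel of $(d_n)$ is infinite: a finite automaton reading the base-$2$ digits of a position $N$ would have to recover, uniformly across all scales $4^j$, both the index of the block containing $N$ and the offset of $N$ within it, and the unbounded, base-$2$-irregular behaviour of $\epsilon$ obstructs this. Making the infinitude of the $2$-kernel precise is the main obstacle; the preceding Cobham reduction is comparatively routine once the synchronization transfer is in hand.
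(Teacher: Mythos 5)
Your proposal leaves the two substantive parts of Theorem~\ref{thm:A} unproven. First, you take the structural formula $(d_n)_{n\geq 1}=\rho_8(\mathbf{p})$ as given, but this is half the content of the theorem: in the paper it rests on Eq.~\eqref{eq:sma} together with Lemma~\ref{lem:mu} (identifying the gap sequence $(\mu_n)$) and Lemma~\ref{lem:alpha-1} (identifying the star counts $(\alpha_n)$, which in the case $k=1$ requires the nontrivial inductive analysis of the positions of the $*$'s in Lemma~\ref{lem:vk=1}). Nothing in your write-up addresses this.

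Second, and more seriously, the core of the non-automaticity claim is missing. Your reduction is sound as far as it goes: recovering $\mathbf{p}$ from the positions of the letter $8$ via synchronization, then invoking Cobham and Eilenberg, correctly reduces the problem to showing that $(d_n)$ is not $2$-automatic. (The paper achieves the analogous reduction in the general setting via Durand's theorem, see Lemma~\ref{simplification}; both routes are routine.) But the case $\kappa=2$ is exactly the hard part, and you concede you do not have it (``making the infinitude of the $2$-kernel precise is the main obstacle''). Moreover, the obstruction you propose to exploit---unboundedness of the discrepancy $\epsilon(k)$, equivalently of $D(N)-\tfrac{21}{4}N$---cannot suffice on its own: $2$-automatic sequences routinely have unbounded summatory discrepancy (the Rudin--Shapiro partial sums grow like $\sqrt{N}$), so no contradiction can follow from unboundedness alone; some finer, base-specific structure must be used. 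The paper handles this by applying the coding $\psi:8\to 8,\ 3\to 3,\ 6\to 8$, recognizing $\psi(\mathbf{d})$ as the fixed point of $8\to 833,\ 3\to 8$, and citing its known non-automaticity \cite{Shallit,AAS,Rampersad-Stipulanti}; the underlying argument, carried out for all $k$ in Section~\ref{four}, builds a numeration system on the Jacobsthal-type numbers $W_k(n)$, characterizes the letter at position $n$ through the tail of the valid $W$-expansion of $n+1$ (Proposition~\ref{value-1-2}), and exhibits, for each pair $c<c'$, explicit indices where the kernel sequences $\big(t_k((k+1)^{2c}n-1)\big)_{n\geq 0}$ and $\big(t_k((k+1)^{2c'}n-1)\big)_{n\geq 0}$ disagree. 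Neither this construction nor any substitute for it appears in your proposal, so the theorem remains unproved.
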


We introduce a general version of the period-doubling sequence.
The \emph{period-$k$-folding} sequence $\mathbf{p}^{(k)}$ is the fixed point of the morphism \[\sigma_k:\, 0\to 0^k1 \text{ and }1\to 0^{k+1},\] where $k\geq 1$ is an integer. Note that $\mathbf{p}^{(k)}$ is $(k+1)$-automatic and $\mathbf{p}^{(1)}$ is the classical period-doubling sequence \cite{Damanik:2000}. 
Define the morphism
\begin{equation*}
\tau_k: \begin{cases}1 \to 1^{k-1}2;\\
2\to 1^{k-1}21^{k+1}.\end{cases}
\end{equation*}
The sum-free set generated by the period-$k$-folding sequence is related to the morphism $\tau_{k}$ in the following way.
\begin{theorem}\label{thm:B}
Let $S=(s_n)_{n\geq 1}$ be the sum-free set generated by $\mathbf{p}^{(k)}$, 
where $k\geq 2$. Then $(s_{n+1}-s_n)_{n\geq 1}=\rho_1(\tau_k^{\infty}(1))$, where $\rho_1$ is the coding $1\to k+2$ and $2\to 2k+4$.
\end{theorem}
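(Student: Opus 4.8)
The plan is to run Cameron's construction $\theta$ explicitly on $\mathbf{p}^{(k)}$ and to translate the positions of the letter $1$ in $\mathbf{p}^{(k)}$ into the successive differences of $S$. Recall from the construction of $\theta$ that the bits of $\mathbf{p}^{(k)}$ are read in order, one bit at each \emph{free} integer $n$ (those with $n\notin S+S$ for the portion of $S$ already built), while a \emph{blocked} integer $n\in S+S$ consumes no bit; and $n$ is placed in $S$ exactly when $n$ is free and the bit read there is $1$. Writing $f_1<f_2<\cdots$ for the free integers in increasing order and $i_1<i_2<\cdots$ for the positions of the $1$'s in $\mathbf{p}^{(k)}$, the $i$-th bit is read at $f_i$, so the $1$-bits produce the elements $s_n=f_{i_n}$. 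Consequently
$s_{n+1}-s_n=f_{i_{n+1}}-f_{i_n}=(i_{n+1}-i_n)+\#\{x\in S+S:\ s_n<x\le s_{n+1}\}$, and the statement splits into two parts: (A) determine the gaps $i_{n+1}-i_n$ between consecutive $1$'s of $\mathbf{p}^{(k)}$, and (B) count the blocked integers crossed between two consecutive elements of $S$.

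For (A) I would first note that, since $\sigma_k(0)=0^k1$ and $\sigma_k(1)=0^{k+1}$ both end in $0$ while every $1$ is the last letter of an image of $0$, the sequence $\mathbf{p}^{(k)}$ has no two adjacent $1$'s and the distance between consecutive $1$'s is either $k+1$ or $2k+2$. Coding these gaps by $1$ and $2$ gives a sequence $\mathbf{w}\in\{1,2\}^{\mathbb N}$, and I claim $\mathbf{w}=\tau_k^{\infty}(1)$. The natural tool is the theory of return words: the two return words to the letter $1$ are $r_1=1\,0^{k}$ and $r_2=1\,0^{2k+1}$, coded by $1$ and $2$, and $\mathbf{w}$ is precisely the derived sequence. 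Applying $\sigma_k$ gives $\sigma_k(1\,0^{c})=0^{k+1}(0^k1)^{c}$; here the leading $0^{k+1}$ is a ``carry'' that merges with the trailing $0^{k}$ of the image of the previous return word to form a copy of $r_2$, while the remaining $(0^k1)^{c-1}$ contributes copies of $r_1$. Carrying out this realignment shows $\mathbf{w}$ is a fixed point of $\tau_k$, and since it begins with $1$ we get $\mathbf{w}=\tau_k^{\infty}(1)$. I expect this step to be the main obstacle: because $\sigma_k(1)=0^{k+1}$ does not begin with $1$, the marker convention for return words does not align $\sigma_k$ with $\tau_k$ on the nose, and one must reconcile the induced substitution with $\tau_k$ through the conjugacy created by the carry $0^{k+1}$.

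For (B) I would prove by induction on $n$ the package of invariants: $s_1=k+1$, $s_2=2k+3$, every $s_m\equiv k+1\pmod{k+2}$, and $S+S\subseteq\{x:\ x\equiv k\pmod{k+2}\}$. The base case is read off directly from the prefix $(0^k1)^k0^{k+1}$ of $\mathbf{p}^{(k)}$. For the inductive step, as the construction advances past $s_n$ all intermediate bits up to the next $1$ are $0$, so the only integers that can be blocked before the next $1$-bit is reached are those congruent to $k$ modulo $k+2$; the first such is $s_n+(k+1)=s_n+s_1$, and in the type-$2$ case the second is $s_n+(2k+3)=s_n+s_2$. Both lie in $S+S$, hence are blocked, and no further blocked integer is crossed. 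Counting the free integers traversed then gives $s_{n+1}=s_n+(k+2)$ when $\mathbf{w}_n=1$ and $s_{n+1}=s_n+(2k+4)$ when $\mathbf{w}_n=2$; in either case $s_{n+1}\equiv k+1\pmod{k+2}$, closing the induction and yielding $s_{n+1}-s_n=(k+2)\,\mathbf{w}_n=\rho_1(\mathbf{w}_n)$.

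Combining (A) and (B) gives $(s_{n+1}-s_n)_{n\ge1}=\rho_1(\mathbf{w})=\rho_1(\tau_k^{\infty}(1))$, which is the assertion. The residue bookkeeping in (B) is routine once the one or two blocked positions per gap are pinned down as $s_n+s_1$ and $s_n+s_2$; the genuinely delicate point is (A), namely identifying the derived sequence of $\mathbf{p}^{(k)}$ at the letter $1$ with $\tau_k^{\infty}(1)$ up to the conjugacy introduced by the period-$k$-folding structure.
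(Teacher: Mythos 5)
Your proposal is correct, and its skeleton is in fact the same as the paper's: your splitting $s_{n+1}-s_n=(i_{n+1}-i_n)+\#\{\text{blocked integers crossed}\}$ is Eq.~\eqref{eq:sma}, and your part (B) --- the invariants $s_m\equiv k+1 \pmod{k+2}$ and $S_n+S_n\subseteq\{x\equiv k \pmod{k+2}\}$, the identification of the blocked positions as $s_n+s_1$ and (in the type-$2$ case) $s_n+s_2$, and the two-case count --- is in substance identical to the paper's Lemma~\ref{lem:alpha}. The genuine divergence is part (A). The paper proves the corresponding statement (Lemma~\ref{lem:mu}) by a finite-prefix induction: it introduces the gap map $\Gamma$ and shows $\Gamma(\sigma_k^n(0)\sigma_k(0))=\rho_2(\tau_k^{n-1}(1))$ using the decomposition $\sigma_k^{n+1}(0)=[\sigma_k^n(0)]^k[\sigma_k^{n-1}(0)]^{k+1}$; appending the extra factor $\sigma_k(0)$ to close off the last gap is precisely how the paper sidesteps the alignment problem you flag. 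Your return-word route also works, but the obstacle you name is real and must be written out: regrouping $\sigma_k(\mathbf{p}^{(k)})$ into blocks $B_1=0^k1$, $B_2=0^{2k+1}1$ gives not a fixed-point equation for $\tau_k$ but rather $\mathbf{w}=1^{k-1}2\,\lambda(\mathbf{w})$, where $\lambda$ is the induced substitution $1\to 1^{k-1}2$, $2\to 1^{2k}2$, and $\lambda(2)\neq\tau_k(2)$. The repair is the one-line conjugacy
\begin{equation*}
\tau_k(a)\cdot 1^{k-1}2 \;=\; 1^{k-1}2\cdot\lambda(a)\qquad (a\in\{1,2\}),
\end{equation*}
which yields $\tau_k(w_1\cdots w_n)\,1^{k-1}2=1^{k-1}2\,\lambda(w_1\cdots w_n)$ for every $n$; hence each $\tau_k(w_1\cdots w_n)$ is a prefix of $\mathbf{w}$, so $\tau_k(\mathbf{w})=\mathbf{w}$, and since $\mathbf{w}$ begins with $1$ (and $k\geq 2$), $\mathbf{w}=\tau_k^{\infty}(1)$. (Also fix one slip: $\sigma_k(0)=0^k1$ ends in $1$, not $0$; what you actually need is that every $1$ of $\mathbf{p}^{(k)}$ is the final letter of a block $\sigma_k(0)$ and hence is preceded by $0^k$, so `$11$' never occurs and the gaps are $k+1$ or $2k+2$.) As for what each approach buys: the paper's $\Gamma$-induction is elementary and uniform, needing no return-word formalism, while your argument is a one-step self-similarity computation that explains conceptually where $\tau_k$ comes from --- it is the return substitution of $\mathbf{p}^{(k)}$ at the letter $1$, up to the conjugacy created by the carry $0^{k+1}$ --- at the price of the realignment bookkeeping above.
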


Next result shows the non-automaticity of the sequences $\tau_{k}^{\infty}(1)$. Therefore, by Theorem \ref{thm:B}, the sum-free set generated by period-$k$-folding sequence are not $\kappa$-regular for any $\kappa\geq 2$.

\begin{theorem}\label{thm:C}
The sequence $\tau_k^{\infty}(1)$ is not $\kappa$-automatic for any $\kappa\geq 2$.
\end{theorem}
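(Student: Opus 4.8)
The plan is to derive a contradiction from the rationality of letter frequencies in automatic sequences. Write $\mathbf{w}=\tau_k^{\infty}(1)$ and let $M=\begin{pmatrix} k-1 & 2k \\ 1 & 2\end{pmatrix}$ be the incidence matrix of $\tau_k$, whose $(i,j)$ entry records the number of occurrences of the letter $i$ in $\tau_k(j)$. Since $k\geq 2$, every entry of $M$ is strictly positive, so $\tau_k$ is a primitive morphism. Consequently the frequency $\mu$ of the letter $2$ in $\mathbf{w}$ exists, and the frequency vector is the normalized right Perron--Frobenius eigenvector of $M$. First I would compute this frequency explicitly. The characteristic polynomial of $M$ is $\lambda^2-(k+1)\lambda-2$, with dominant eigenvalue $\lambda=\tfrac12\big((k+1)+\sqrt{(k+1)^2+8}\big)$. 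Solving $Mv=\lambda v$ gives $v=(\lambda-2,\,1)$, with components indexed by the letters $1$ and $2$, so that
\[
\mu=\frac{1}{(\lambda-2)+1}=\frac{1}{\lambda-1}.
\]

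Next I would show $\mu$ is irrational, for which it suffices to show $\lambda$ is irrational, i.e. that $(k+1)^2+8$ is never a perfect square when $k\geq 2$. Indeed, for $k\geq 3$ one has $(k+1)^2<(k+1)^2+8<(k+2)^2$, so the quantity lies strictly between consecutive squares; the remaining case $k=2$ gives $17$, which is not a square. Equivalently, $m^2-(k+1)^2=8$ forces the same-parity factorization $8=2\cdot 4$, yielding only $k=-1$. Hence $\lambda$ is a quadratic irrational and $\mu=1/(\lambda-1)$ is irrational.

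Finally I would invoke the classical fact from the theory of automatic sequences (Allouche and Shallit) that the frequency of any letter in a $\kappa$-automatic sequence, when it exists, is a rational number. Were $\mathbf{w}$ to be $\kappa$-automatic for some $\kappa\geq 2$, its frequency of the letter $2$ would then be simultaneously rational and irrational, a contradiction. This proves that $\mathbf{w}=\tau_k^{\infty}(1)$ is not $\kappa$-automatic for any $\kappa\geq 2$.

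The step I expect to require the most care is the appeal to rationality of frequencies: I would state this theorem precisely, with its existence hypothesis and rationality conclusion, and pin down a clean citation, since the rest of the argument reduces to the eigenvalue computation above. An alternative, fully self-contained route would replace this step by a direct analysis of the counting function $\#\{n<N:\, w_n=2\}$ through the transition matrix of a hypothetical deterministic finite automaton with output recognizing $\mathbf{w}$, whose integer structure and dominant eigenvalue $\kappa$ again force a rational leading coefficient in the count; but the frequency theorem makes this detour unnecessary.
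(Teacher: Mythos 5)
Your proof collapses at the very first computation: the incidence matrix is wrong, and with the correct matrix the entire frequency strategy is unusable. The word $\tau_k(2)=1^{k-1}21^{k+1}$ contains exactly \emph{one} occurrence of the letter $2$, not two, so the incidence matrix is
\[
M_k=\begin{pmatrix} k-1 & 2k \\ 1 & 1 \end{pmatrix},
\]
not $\begin{pmatrix} k-1 & 2k \\ 1 & 2\end{pmatrix}$. Its characteristic polynomial is $\lambda^2-k\lambda-(k+1)=(\lambda-(k+1))(\lambda+1)$, so the dominant eigenvalue is the \emph{integer} $k+1$, not a quadratic irrational. The corresponding right Perron--Frobenius eigenvector is $(k,1)$, so the frequency of the letter $2$ in $\tau_k^{\infty}(1)$ is $1/(k+1)$ --- perfectly rational. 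Consequently the classical theorem that letter frequencies of $\kappa$-automatic sequences are rational (when they exist) yields no contradiction whatsoever, and no variant of the frequency argument can: every letter frequency here is rational. The same objection kills your proposed ``self-contained'' alternative via counting functions, which is just the frequency argument in disguise.

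This miscount is not a repairable slip; it is exactly why the theorem is genuinely difficult. Since the dominant eigenvalue is $k+1$, the sequence $\tau_k^{\infty}(1)$ is $(k+1)$-substitutive, i.e., it lives in the class where automaticity is a priori plausible, and all the soft invariants (frequencies, eigenvalue irrationality, Cobham-type base comparisons alone) are powerless to rule it out. The paper instead first uses Durand's theorem to reduce the problem to showing that the sequence is not $(k+1)$-automatic (Lemma~\ref{simplification}, where the matrix $M_k$ above appears with the entry you got wrong), and then must work hard inside base $k+1$: it introduces a bespoke numeration system with digit values $W_k(n)=((k+1)^n-(-1)^n)/(k+2)$, characterizes the positions of the letter $2$ in terms of valid $W$-expansions (Proposition~\ref{value-1-2}), and constructs explicit integers witnessing that infinitely many $(k+1)$-kernel sequences are pairwise distinct. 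Some argument of this fine, arithmetic nature is unavoidable; your proposal never engages with it.
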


It is also interesting to investigate the sum-free sets generated 
by some non-automatic sequences. For example, the famous class of non-automatic sequences: the Sturmian sequences. 

\begin{theorem}\label{thm:D}
The difference sequences of the sum-free sets generated by the Sturmian sequences 
beginning with `$\,11$' are also Sturmian sequences.
\end{theorem}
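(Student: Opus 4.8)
The plan is to first make Cameron's map $\theta$ explicit and then to show that, for a Sturmian input beginning with `$11$', the resulting sum-free set is nothing but the odd integers sitting at the positions of the $1$'s. Recall from Section~\ref{theta-sec} that $S=\theta(\mathbf{t})$ is built greedily: we scan $n=1,2,3,\dots$, automatically discard every $n$ lying in $S+S$, and at each remaining (``free'') $n$ we read the next bit of $\mathbf{t}$, placing $n$ in $S$ exactly when that bit is $1$. Since $\mathbf{t}$ begins with `$11$' its slope (the frequency of $1$) is strictly larger than $1/2$, so $\mathbf{t}$ contains no factor `$00$'; in particular the gaps between consecutive $1$'s all lie in $\{1,2\}$. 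Writing $A=\{i:t_i=1\}=\{a_1<a_2<\cdots\}$, the goal of the first stage is to prove that $S=\{2a_k-1:k\ge 1\}$.

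Second, I would establish this description by induction on $n$, maintaining the invariant that every even number $\le n$ has been forced out (lies in $S+S$) while every odd number $2k-1\le n$ was a free decision governed by $t_k$. The step for odd $n$ is immediate, since $S$ stays inside the odd integers, so $S+S$ consists of even numbers and no odd number is ever forced out. The step for even $n=2m$ requires the key combinatorial fact
\[
A+A\ \supseteq\ \{2,3,4,\dots\},
\]
because $2m=(2i-1)+(2j-1)$ with $t_i=t_j=1$ is exactly the assertion $m+1\in A+A$, and the relevant indices $i,j\le m$ have already been decided. This additive covering property is the heart of the argument and I expect it to be the main obstacle. For large $m$ it follows from the balance (bounded discrepancy) property of Sturmian words: a prefix of length $N-1$ contains at least $\lfloor(N-1)\alpha\rfloor$ ones, and two subsets of $[1,N-1]$ of size exceeding $(N-1)/2$ must intersect, so $N\in A+A$ once $(N-1)(\alpha-\tfrac12)$ is large enough. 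The delicate part is the finitely many small $N$, where the balance estimate is not yet decisive; there I would argue directly from the absence of `$00$' (the $0$'s are isolated, so both neighbours of every $0$ lie in $A$) together with $1,2\in A$, which anchors the base cases.

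Third, with $S=\{2a_k-1\}$ in hand, the difference sequence is $s_{k+1}-s_k=2(a_{k+1}-a_k)$, that is, twice the sequence of gaps between consecutive $1$'s of $\mathbf{t}$, and it remains to identify this gap sequence as Sturmian. I would use the fact that the positions of the $1$'s in a Sturmian word of slope $\alpha$ form a Beatty sequence $a_k=\lceil k/\alpha\rceil$ (up to the intercept), whose difference sequence $(a_{k+1}-a_k)_k$ is the mechanical word of slope $1/\alpha\in(1,2)$ and is therefore itself Sturmian, taking the two values $1$ and $2$. Equivalently, $\mathbf{t}$ decomposes uniquely into blocks from $\{1,10\}$, and the coding $1\mapsto 1,\ 10\mapsto 2$ is the desubstitution by the Sturmian morphism $1\to1,\ 2\to10$, under which Sturmian words are preserved and reflected; aperiodicity of $\mathbf{t}$ forces aperiodicity of the gap sequence. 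Applying the coding $1\mapsto 2,\ 2\mapsto 4$, a mere relabelling of the two letters, then shows that the difference sequence of $S$ is Sturmian, completing the proof. The only genuine difficulty is the additive covering lemma of the second step; the Beatty and desubstitution facts used in the third step are classical closure properties of Sturmian words.
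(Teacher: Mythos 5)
Your proposal is correct and follows essentially the same route as the paper: your explicit description $S=\{2a_k-1\}$ is exactly the paper's Lemma~\ref{star-periodic} (the stars occur precisely at the even positions), proved by the same induction using $1,2\in A$ and the absence of `$00$', and your final step -- desubstituting the gap sequence via $1\mapsto 1$, $10\mapsto 2$ and then recoding $1\mapsto 2$, $2\mapsto 4$ -- is exactly how the paper concludes, citing \cite[Corollary 2.3.3]{M02}. The only remark worth adding is that the covering property $A+A\supseteq\{2,3,\ldots\}$, which you flag as the main obstacle, is immediate for \emph{every} $N$: since `$00$' does not occur, one of $N-1,N-2$ lies in $A$, and $1,2\in A$, so the balance/discrepancy argument for large $N$ is unnecessary.
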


We focus on the Sturmian sequences beginning with `$11$' for technical reasons.
It remains unknown if similar phenomenon occurs for other Sturmian sequences.
\bigskip

The paper is organized as follows. In Section~\ref{two}, we introduce the bijection $\theta$ and give some basic facts about the sum-free  sets. In Section~\ref{three}, we
study the sum-free sets generated by period-$k$-folding sequences and prove Theorem \ref{thm:A} and \ref{thm:B}. In Section~\ref{four}, we prove Theorem \ref{thm:C}, which is the non-automaticity of the sequence $\tau_k^{\infty}(1)$.
In Section~\ref{five}, sum-free sets generated by certain Sturmian sequences are investigated and Theorem \ref{thm:D} is proved.  Finally, in Section~\ref{six}, we
give a conjecture about subword complexity.

\section{Preliminaries}
\label{two}
\subsection{Notations and definitions} For a detailed discussion about the following terms, such as ``$\kappa$-automatic sequence'', ``$\kappa$-regular sequence'', ``Sturmian sequence'', and so forth, see \cite{kreg1,kreg2,AS,M02}.
\medskip

\noindent\textbf{Words.} An \textit{alphabet} $\mathcal{A}$ is a finite set. The elements of $\mathcal{A}$ are called \textit{letters}. The set of all finite words over the alphabet $\mathcal{A}$ is $\mathcal{A}^{*}:=\cup_{\geq 0}\mathcal{A}^{n}$, where $\mathcal{A}^{0}=\{\varepsilon\}$ and $\varepsilon$ denotes the empty word. For $w\in\mathcal{A}^{*}$, let $|w|$ denote the \textit{length} of $w$. Namely, if $w\in\mathcal{A}^{n}$, then $|w|=n$. For two words $w=w_1w_{2}\cdots w_{|w|},\ v=v_{1}v_{2}\cdots v_{|v|}\in\mathcal{A}^{*}$, their \textit{concatenation} is $wv=w_1w_{2}\cdots w_{|w|}v_{1}v_{2}\cdots v_{|v|}$. 
\medskip

\noindent\textbf{Morphism.} A \textit{morphism} $\sigma$ on $\mathcal{A}$ is the map $\mathcal{A}\to \mathcal{A}^{*}$, which can be extended to $\mathcal{A}^{*}$ satisfying $\sigma(wv)=\sigma(w)\sigma(v)$ for all $w, v\in\mathcal{A}^{*}$. Let $\mathcal{A}^{\omega}$ be the set of infinite sequence on $\mathcal{A}$ and let $\mathcal{A}^{\infty}:=\mathcal{A}^{\omega}\cup \mathcal{A}^{*}$. For any $a\in\mathcal{A}$, by $\sigma^{\infty}(a)$ we mean the limit $\lim_{n\to\infty}\sigma^{n}(a)$, provided the limit exists. The limit is taken under the natural metric on $\mathcal{A}^{\infty}$.
% A natural metric on $\mathcal{A}^{\infty}$ is given by \[\mathrm{dist}(w,v)=2^{\inf\{n\in\mathbb{N} \mid w_n\neq v_n\}}\] where $w=w_{0}w_{1}\dots$, $v=v_0v_1\cdots\in\mathcal{A}^{\infty}$. 
\medskip

\noindent\textbf{$\kappa$-automatic sequences and $\kappa$-regular sequences.} Let $\kappa\geq 2$ be an integer. A sequence $\mathbf{u}=(u_{n})_{n\geq 0}$ over
the alphabet $\mathcal{A}$ is a \textit{$\kappa$-automatic} sequence if and only if its $\kappa$-kernel $K_{\kappa}(\mathbf{u})$ is finite, where $K_{\kappa}(\mathbf{u}):=\{(u_{\kappa^{i}n+j})_{n\geq 0} \mid i\geq 0,\ 0\leq j < \kappa^{i}\}$. A sequence $\mathbf{u}=(u_{n})_{n\geq 0}$ 
taking values in $\mathbb{Z}$ is \textit{$\kappa$-regular} if the $\mathbb{Z}$-module generated by its $\kappa$-kernel $K_{\kappa}(\mathbf{u})$ is finitely generated. 
\medskip

\noindent\textbf{Sturmian sequences.}  For $\mathbf{w}\in\mathcal{A}^{\infty}$, its \textit{subword complexity} function $P_{\mathbf{w}}:\mathbb{N}\to\mathbb{N}$ is defined by 
\[P_{\mathbf{w}}(n)=\#\{w_{i}w_{i+1}\cdots w_{i+n-1} \mid i\geq 0\}.\]
A sequence $\mathbf{w}$ is a \textit{Sturmian} sequence if $P_{\mathbf{w}}(n)=n+1$ for all $n\geq 1$.

\subsection{The bijection $\theta$}
\label{theta-sec}

Let $\mathbf{w}=w_1w_2w_3\cdots\in\{0,1\}^{\infty}$.
We now construct sets $S_i,T_i,U_i$, as follows.
Define $S_0=T_0=U_0=\emptyset$.
For $i=1,2,3,\ldots$, 
let $n_i$ be the least element of
$\mathbb{N}\backslash(S_{i-1}\cup T_{i-1}\cup U_{i-1})$. 
If $w_i=1$, set
\[S_i=S_{i-1}\cup\{n_i\},\quad T_i=S_i+S_i,\quad U_i=S_{i-1},\]
while if $w_i=0$, set
\[S_i=S_{i-1},\quad T_i=T_{i-1},\quad U_i=U_{i-1}\cup\{n_i\}.\]
Let $S=\bigcup_i S_i$. Then, since each $S_i$ is sum-free and $S_i\subset S_{i+1}$, the set $S$ is also sum-free. We define $S$ to be the image of $\mathbf{w}$ under $\theta$, i.e., $\theta(\mathbf{w})=S$. For example,
\begin{eqnarray*}
% \nonumber to remove numbering (before each equation)
\theta:~11111111\cdots &\mapsto & \{1,3,5,7,9,11,13,15,\ldots\}, \\
  \theta:~01010101\cdots &\mapsto & \{2,5,8,11,\ldots\}.
\end{eqnarray*}

The inverse of $\theta$ is given as follows. Let $S\subset\mathbb{N}$ be a sum-free set with $\# S=\infty$.  We define the sequence $\mathbf{v}=(v_n)_{n\geq 1}$ over the alphabet $\{0,1,*\}$ by
\begin{equation}\label{eq:v}
v_n=\begin{cases} 1, &\text{if }n\in S;\\
*, &\text{if }n\in S+S;\\
0, & \text{otherwise.}
\end{cases}
\end{equation}
Deleting all $*$'s in $\mathbf{v}$, we obtain a zero-one sequence $\mathbf{v}^{\prime}$ and one can verify that $\theta(\mathbf{v}^{\prime})=S$.

\subsection{Basic facts}
Note that $S=\{i\in\mathbb{N}_{\geq 1}\mid v_i=1\}=(s_{n})_{n\geq 1}$ and 
\begin{equation}\label{eq:sma}
s_{n+1}-s_{n}=\mu_n+\alpha_n+1,
\end{equation}
where \[\mu_n:= \# \{i\in\mathbb{N}\mid v_i=0,\ s_{n}<i<s_{n+1}\}\]
and \[\alpha_n:=\# \{i\in\mathbb{N}\mid v_i=*,\ s_{n}<i<s_{n+1}\}.\] The quantity $\mu_n$ (resp., $\alpha_n$) is the number of `$0$'s (resp., `$*$'s) between the $n$-th and the $(n+1)$-th occurrence of `1' in $\mathbf{v}$. Moreover, $\mu_n$ also counts the number of `$0$'s between the $n$-th and the $(n+1)$-th occurrence of `1' in $\mathbf{v}^{\prime}$. Let $S^{\prime}=\{i\in\mathbb{N}\mid v^{\prime}_i=1\}=(s^{\prime}_n)_{n\geq 1}$. Then
\begin{equation}\label{eq:mu}
\mu_n=s^{\prime}_{n+1}-s^{\prime}_n.
\end{equation}

\section{Sum-free sets generated by period-\texorpdfstring{$k$}{k}-folding sequences}
\label{three}
Let $k\geq 1$ be an integer. Recall that $\sigma_k$ is the morphism  $0\to 0^k1$ and $1\to 0^{k+1}$ and the \emph{period-$k$-folding sequence} \[\mathbf{p}^{(k)}=(p_n)_{n\geq 0}=\sigma_k^{\infty}(0).\]
The sequence $\mathbf{p}^{(k)}$ can be also defined recursively by the following recurrence relations: $p_0=0$ and for all $n\geq 0$,
\begin{equation}\label{eq:gpd}
p_{(k+1)n+j}=\begin{cases} 0, & \text{ if }j=0,1,\dots, (k-1); \\
1-p_{n}, & \text{ if } j=k.\end{cases}
\end{equation}
Theorem \ref{thm:B} says that the sum-free set corresponding to $\mathbf{p}^{(k)}$ is related to the following morphism
\begin{equation*}
\tau_{k}: \begin{cases}1 \to 1^{k-1}2;\\
2\to 1^{k-1}21^{k+1}.\end{cases}
\end{equation*}

We remark that $\tau_{k}^{\infty}(1)$ is the image of the period-doubling sequence under a non-uniform projection. That is, we have
\begin{equation}\label{eq:projection}
\tau_{k}^{\infty}(1)=\rho_0(\sigma_k^{\infty}(0)),
\end{equation}
where $\rho_0$ maps $ 0\to 1^{k-1}2$ and $1\to 1^{k+1}$. One can verify 
Eq.~\eqref{eq:projection} by arguing that for all $n\geq 1$ we have
\begin{equation}\label{eq:tau-n}
\tau_{k}^{n+1}(1)=\rho_0(\sigma_k^n(0)).
\end{equation} 
Note that $\tau_{k}^2(1)=(1^{k-1}2)^{k}1^{k+1}=\rho_0(\sigma_k(0))$,
and we suppose that $\tau_{k}^{m+1}(1)=\rho_0(\sigma_k^m(0))$ for all $m\leq n$. Then
\begin{align*}
\tau_{k}^{n+2}(1) & = \tau_{k}^{n+1}(1^{k-1}2)\\
& = (\tau_{k}^{n+1}(1))^{k-1}\tau_{k}^{n+1}(2)\\
& =[\tau_{k}^{n+1}(1)]^{k}[\tau_{k}^{n}(1)]^{k+1}\\
& =[\rho_0(\sigma_k^n(0))]^{k}[\rho_0(\sigma_k^{n-1}(0))]^{k+1}\\
& = \rho_0\Big(\sigma_k^{n}(0^k)\sigma_k^{n-1}(0^{k+1})\Big) \\
& = \rho_0(\sigma_k^{n+1}(0)).
\end{align*}
So Eq.~\eqref{eq:tau-n} follows by induction.

\subsection{The blocks of zeros}
Let $\Gamma$ be the map between $\{0,1\}^{*}$ and $\mathbb{N}^{*}$ that
measures the distance between adjacent `$1$'s in finite binary
words. More precisely, if $w=0^{x_0}10^{x_1}1\cdots 0^{x_n}10^{x_{n+1}}$, 
then \[\Gamma(w)=x_1x_2\cdots x_n,\]
where $x_i\in\mathbb{N}$ for $i=0,1,\ldots, n+1$. For $w,v\in\{0,1\}^*$ and $x\geq 0$, we have
\begin{equation}\label{eq:gamma}
\Gamma(w0^x1v)=\Gamma(w0^x1)\Gamma(0^x1v).
\end{equation}

Let $\rho_2$ be the coding $1\to k$ and $2\to 2k+1$.

\begin{lemma}\label{lem:gamma}
For all $k\geq 1$, we have $\Gamma\big([\sigma_{k}^n(0)]^j\sigma_{k}(0)\big)=[\rho_2(\tau_{k}^{n-1}(k))]^j$ for all $n,j\geq 1$.
\end{lemma}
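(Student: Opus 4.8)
The plan is to reduce everything to a study of how $\sigma_k$ transforms the ``gap data'' of a binary word, and then to invoke the already-established identity~\eqref{eq:tau-n}. Write $B_m:=\sigma_k^m(0)$, so that $B_0=0$, $B_1=0^k1=\sigma_k(0)$ and $B_m=\sigma_k(B_{m-1})$. Two elementary facts, each proved by a one-line induction on $m$, are used throughout: $B_m$ begins with the letter $0$ (indeed with $0^k1$ when $m\ge1$); and $B_m$ contains no factor $11$. The second holds because each of $\sigma_k(0)=0^k1$ and $\sigma_k(1)=0^{k+1}$ begins with $0$ and carries its unique $1$, if any, at its right end, so in $\sigma_k(v)$ every $1$ is immediately followed by a $0$ or ends the word; consequently $B_m^{\,j}$ and $B_m^{\,j}0$ also avoid $11$. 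Since $\sigma_k$ is a morphism and $B_1=\sigma_k(0)$, for every $n\ge1$ we have
\begin{equation*}
[\sigma_k^n(0)]^j\,\sigma_k(0)=B_n^{\,j}B_1=\sigma_k\big(B_{n-1}^{\,j}\,0\big),
\end{equation*}
so it is enough to understand $\Gamma\circ\sigma_k$.

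The core of the argument is a pair of substitution rules for gap data. For a binary word $x$ with no factor $11$, let $\eta(x)$ be the word over $\{1,2\}$ recording the successive gaps between consecutive $0$'s of $x$, a gap contributing the letter $1$ if the two $0$'s are adjacent and the letter $2$ if they are separated by a single $1$ (well defined precisely because $x$ has no $11$). I claim
\begin{align*}
\Gamma\big(\sigma_k(u)\big)&=\rho_2\big(\eta(u)\big),\\
\eta\big(\sigma_k(w)\,0\big)&=\rho_0(w),
\end{align*}
the first for every word $u$ with no factor $11$, the second for every word $w$ beginning with $0$. Both are proved by the same block analysis: read $\sigma_k(u)$ (resp.\ $\sigma_k(w)0$) as a concatenation of length-$(k+1)$ blocks, one per input letter, the block $\sigma_k(0)=0^k1$ carrying a single $1$ at its right end and $\sigma_k(1)=0^{k+1}$ carrying none. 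For the first rule the $1$'s of $\sigma_k(u)$ lie precisely at the right ends of the blocks produced by the $0$'s of $u$; if two consecutive such $0$'s are separated in $u$ by $g\in\{0,1\}$ letters $1$ (here $g\le1$ since $u$ has no $11$), the number of $0$'s between the corresponding $1$'s of $\sigma_k(u)$ is $(g+1)(k+1)-1$, equal to $k=\rho_2(1)$ when $g=0$ and to $2k+1=\rho_2(2)$ when $g=1$. For the second rule, reading gaps between consecutive $0$'s, the block $\sigma_k(0)=0^k1$ contributes the $k-1$ adjacencies among its own zeros and then the letter $2$ recording its terminal $1$ before the next $0$, i.e.\ $1^{k-1}2=\rho_0(0)$, while $\sigma_k(1)=0^{k+1}$ contributes the $k$ adjacencies among its zeros followed by one more adjacency at its right boundary, i.e.\ $1^{k+1}=\rho_0(1)$; the appended $0$ closes off the last block so that it too contributes its full $\rho_0$-image, and the hypothesis that $w$ begins with $0$ ensures the first block is not truncated.

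Chaining the three identities proves the lemma. For every $n\ge1$, the reduction above and the first rule give $\Gamma(B_n^{\,j}B_1)=\rho_2\big(\eta(B_{n-1}^{\,j}0)\big)$. If $n=1$ the argument is $B_0^{\,j}0=0^{\,j+1}$, all of whose $0$'s are adjacent, so $\eta(0^{\,j+1})=1^{\,j}=[\tau_k^0(1)]^{\,j}$. If $n\ge2$ then $B_{n-1}^{\,j}=\sigma_k(B_{n-2}^{\,j})$, and the second rule (applicable since $B_{n-2}^{\,j}$ begins with $0$) together with~\eqref{eq:tau-n} gives
\begin{equation*}
\eta(B_{n-1}^{\,j}0)=\rho_0(B_{n-2}^{\,j})=\rho_0(B_{n-2})^{\,j}=\big[\rho_0(\sigma_k^{\,n-2}(0))\big]^{\,j}=[\tau_k^{\,n-1}(1)]^{\,j}.
\end{equation*}
In both cases $\eta(B_{n-1}^{\,j}0)=[\tau_k^{\,n-1}(1)]^{\,j}$, and since $\rho_2$ is a coding it commutes with concatenation, so
\begin{equation*}
\Gamma\big([\sigma_k^n(0)]^j\sigma_k(0)\big)=\rho_2\big([\tau_k^{\,n-1}(1)]^{\,j}\big)=\big[\rho_2(\tau_k^{\,n-1}(1))\big]^{\,j},
\end{equation*}
as asserted (the innermost letter being $1$, forced since $\tau_k$ is defined on $\{1,2\}$). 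Note that the power $j$ is carried along for free by the morphisms and the coding, so no separate induction on $j$ via the concatenation property~\eqref{eq:gamma} is required.

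I expect the only real obstacle to be the boundary bookkeeping inside the two substitution rules: one must verify that $\Gamma$ and $\eta$ correctly ignore the leading and trailing runs, that the first block and the last block (together with the appended $0$ in the second rule) each contribute their full image rather than a truncated one, and that the absence of the factor $11$ is exactly what forces every gap to take one of only two values. Once these endpoint cases are pinned down, the block computation is routine and the displayed identities chain together mechanically.
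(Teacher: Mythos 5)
Your proof is correct, but it takes a genuinely different route from the paper's. The paper first uses the splitting property \eqref{eq:gamma} to pull the exponent $j$ outside, reducing to $\Gamma\big(\sigma_k^n(0)\sigma_k(0)\big)=\rho_2(\tau_k^{n-1}(1))$, and then proves that identity by a fresh induction on $n$ whose step runs the two parallel recursions $\sigma_k^{n+1}(0)=[\sigma_k^n(0)]^k[\sigma_k^{n-1}(0)]^{k+1}$ and $\tau_k^{n}(1)=[\tau_k^{n-1}(1)]^k[\tau_k^{n-2}(1)]^{k+1}$ side by side. You avoid any new induction: your conjugation identity $[\sigma_k^n(0)]^j\sigma_k(0)=\sigma_k\big(\sigma_k^{n-1}(0)^j\,0\big)$, together with the two block-counting rules $\Gamma\circ\sigma_k=\rho_2\circ\eta$ and $\eta(\sigma_k(\cdot)\,0)=\rho_0(\cdot)$, reduces the lemma directly to the identity \eqref{eq:tau-n}, which the paper establishes before the lemma (so there is no circularity). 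What your approach buys is conceptual transparency --- it explains structurally why $\rho_2$ and $\tau_k$ appear, namely because $\rho_0$ is exactly the gap-encoding of $\sigma_k$-images --- and it carries the exponent $j$ along for free, with no separate appeal to \eqref{eq:gamma}; what the paper's approach buys is self-containedness, since its induction needs nothing beyond \eqref{eq:gamma} and the definitions. Two small points you should make explicit: your use of \eqref{eq:tau-n} at $n=2$ invokes the index-$0$ instance $\tau_k(1)=\rho_0(0)$, which is immediate from the definitions but lies outside the range $n\geq 1$ for which the paper states that equation; and your rule $\eta(\sigma_k(w)\,0)=\rho_0(w)$ actually needs no hypothesis on $w$ (both images of $\sigma_k$ begin with $0$, so the first block is never truncated), though assuming $w$ begins with $0$ is harmless. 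You also correctly read the statement's ``$\tau_k^{n-1}(k)$'' as a typo for $\tau_k^{n-1}(1)$, which is what both proofs establish.
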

\begin{proof}
Note that $\sigma_{k}(0)=0^k1$ is a prefix of $\sigma_{k}^{n}(0)$ for all $n\geq 1$.
By Eq.~\eqref{eq:gamma} we have
\begin{equation}\label{eq:g1}
\Gamma([\sigma_{k}^n(0)]^j\sigma_{k}(0))=[\Gamma(\sigma_{k}^n(0)\sigma_{k}(0))]^j
\end{equation}
 for all $j\geq 1$. So we only need to show that for all $n\geq 1$, we have
\begin{equation}\label{eq:g2}
\Gamma(\sigma_{k}^n(0)\sigma_{k}(0))=\rho_2(\tau_{k}^{n-1}(1)).
\end{equation}
Obviously, Eq.~\eqref{eq:g2} holds for $n=1$ and $2$. Suppose that Eq.~\eqref{eq:g2} holds for all $m\leq n$. We have
\begin{align*}
\Gamma(\sigma_{k}^{n+1}(0)\sigma_{k}(0))
& = \Gamma(\sigma_{k}^n(0^k1)\sigma_{k}(0))\\
& = \Gamma([\sigma_{k}^n(0)]^k[\sigma_{k}^{n-1}(0)]^{k+1}\sigma_{k}(0)) \\
& =  \Gamma([\sigma_{k}^n(0)]^k\sigma_{k}(0))\Gamma([\sigma_{k}^{n-1}(0)]^{k+1}\sigma_{k}(0)) & (\text{by Eq.~}\eqref{eq:gamma})\\
& = [\Gamma(\sigma_{k}^n(0)\sigma_{k}(0))]^k[\Gamma(\sigma_{k}^{n-1}(0)\sigma_{k}(0)]^{k+1} & (\text{by Eq.~}\eqref{eq:g1})\\
& = \rho_2\Big([\tau_{k}^{n-1}(1)]^k[\tau_{k}^{n-2}(1)]^{k+1}\Big)  & (\text{by Eq.~}\eqref{eq:g2})\\
& = \rho_2(\tau_{k}^n(1)).
\end{align*}
Thus Eq.~\eqref{eq:g2} holds for $n+1$, which completes the proof.
\end{proof}

\begin{lemma}\label{lem:mu}
For all $k\geq 1$, $(\mu_n)_{n\geq 1}=\rho_2(\tau_{k}^{\infty}(1))$.
\end{lemma}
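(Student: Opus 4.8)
The plan is to recognize the gap sequence $(\mu_n)_{n\geq 1}$ as exactly what the map $\Gamma$ reads off from the generating sequence $\mathbf{p}^{(k)}$, and then to evaluate $\Gamma(\mathbf{p}^{(k)})$ by passing to a limit in Lemma~\ref{lem:gamma}.

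First I would pin down which sequence carries the information about $(\mu_n)$. Since $S$ is generated by $\mathbf{p}^{(k)}$ we have $S=\theta(\mathbf{p}^{(k)})$, and because $\theta$ is a bijection, the word $\mathbf{v}'$ obtained from $S$ via Eq.~\eqref{eq:v} (after deleting the $*$'s) satisfies $\mathbf{v}'=\mathbf{p}^{(k)}$. By definition $\mu_n$ counts the $0$'s lying between the $n$-th and the $(n+1)$-th occurrence of $1$ in $\mathbf{v}'=\mathbf{p}^{(k)}$, which is precisely the quantity recorded by $\Gamma$. Writing $\mathbf{p}^{(k)}=0^{x_0}10^{x_1}10^{x_2}1\cdots$ and using that $\mathbf{p}^{(k)}$ begins with $\sigma_k(0)=0^k1$ (so $x_0=k$), the block lengths produced by $\Gamma$, which discards only the leading block $x_0$, are exactly $x_1x_2\cdots=\mu_1\mu_2\cdots$. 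Hence $(\mu_n)_{n\geq 1}=\Gamma(\mathbf{p}^{(k)})$, once $\Gamma$ is extended to the infinite word in the natural way.

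Next I would compute $\Gamma(\mathbf{p}^{(k)})$ as a limit of $\Gamma$ over well-chosen prefixes. The crucial point is the choice of prefix: for each $n\geq 1$ the word $\sigma_k^n(0)\sigma_k(0)$ is a prefix of $\mathbf{p}^{(k)}$, since $\mathbf{p}^{(k)}=\sigma_k^n(\mathbf{p}^{(k)})$ begins with $\sigma_k^n(0)\sigma_k^n(0)$ when $k\geq 2$ (as then $p_1=0$) and $\sigma_k^n(0)$ in turn begins with $\sigma_k(0)=0^k1$; the case $k=1$ needs only the restriction $n\geq 2$. What makes the bookkeeping work is that this prefix \emph{ends in the letter} $1$, so $\Gamma$ applied to it loses no trailing block and its value is a genuine prefix of $\Gamma(\mathbf{p}^{(k)})$. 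The appended factor $\sigma_k(0)=0^k1$ is present for exactly this reason: to cap the final block of $0$'s with a terminal $1$. As $n\to\infty$ the lengths $|\sigma_k^n(0)\sigma_k(0)|$ tend to infinity, so these nested prefixes exhaust $\Gamma(\mathbf{p}^{(k)})$.

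Finally I would invoke Lemma~\ref{lem:gamma} with $j=1$, giving $\Gamma\big(\sigma_k^n(0)\sigma_k(0)\big)=\rho_2(\tau_k^{n-1}(1))$. Letting $n\to\infty$ and using that the coding $\rho_2$ is continuous (being letter-to-letter) together with $\tau_k^{n-1}(1)\to\tau_k^{\infty}(1)$, I obtain
\[
(\mu_n)_{n\geq 1}=\Gamma(\mathbf{p}^{(k)})=\lim_{n\to\infty}\rho_2\big(\tau_k^{n-1}(1)\big)=\rho_2\big(\tau_k^{\infty}(1)\big),
\]
as claimed. The main obstacle is not the algebra, which Lemma~\ref{lem:gamma} hands us, but the careful passage from finite words to the infinite sequence: one must verify that appending $\sigma_k(0)$ genuinely closes off the trailing $0$-block so that the finite $\Gamma$-values align as nested prefixes of the infinite gap sequence, and (for $k=1$) that $\tau_1^{\infty}(1)$ is understood via Eq.~\eqref{eq:projection}, so that the limit on the right-hand side is meaningful.
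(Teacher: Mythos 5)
Your proposal is correct and follows essentially the same route as the paper: identify $(\mu_n)_{n\geq 1}$ with $\lim_{n\to\infty}\Gamma\bigl(\sigma_k^n(0)\sigma_k(0)\bigr)$ and apply Lemma~\ref{lem:gamma} with $j=1$. You in fact supply details the paper leaves implicit (why $\mathbf{v}'=\mathbf{p}^{(k)}$, why the appended $\sigma_k(0)$ makes the finite $\Gamma$-values nested prefixes, and the $k=1$ caveat that $\sigma_1(0)\sigma_1(0)$ is not a prefix so one should take $n\geq 2$), which only strengthens the argument.
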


\begin{proof}
Recall that $\mu_n$ is the number of `$0$'s between the $n$-th and the $(n+1)$-th occurrence of 
`$1$' in $\mathbf{p}^{(k)}$. Note also that $\sigma_{k}^n(0)\sigma_{k}(0)$ is a prefix of 
$\mathbf{p}^{(k)}$ for all $n\geq 1$. Thus
\begin{equation}\label{eq:mulimit}
(\mu_n)_{n\geq 1}=\lim_{i\to\infty}\Gamma\big(\sigma_{k}^i(0)\sigma_{k}(0)\big).
\end{equation}
The result follows from Eq.~\eqref{eq:mulimit} and Lemma \ref{lem:gamma}.
\end{proof}

\subsection{The gaps for stars when $k\geq 2$}
While we construct the sum-free set $S$ corresponding to $\mathbf{p}^{(k)}$, we actually insert stars into $\mathbf{p}^{(k)}$ and finally obtain the ternary sequence $(v_n)_{n\geq 1}$ satisfying Eq.~\eqref{eq:v}.
\begin{lemma}\label{lem:alpha}
For all $k\geq 2$, $(\alpha_n)_{n\geq 1}=\tau_{k}^{\infty}(1).$
\end{lemma}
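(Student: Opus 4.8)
The plan is to determine the stars gap by gap, using the greedy nature of the construction $\theta$ together with the values of $\mu_n$ from Lemma~\ref{lem:mu}, and to organize everything around the residue of the elements of $S$ modulo $k+2$. Since $\mathbf{p}^{(k)}$ begins with $0^k1$, the first $k$ steps of $\theta$ place $1,\dots,k$ into $U$ and the $(k+1)$-st step places $k+1$ into $S$; hence $s_1=k+1$. Because $k\ge 2$, the word $\tau_k^{\infty}(1)$ begins with the letter $1$, so by Lemma~\ref{lem:mu} the first gap has $\mu_1=k$; this makes the base case of the induction fall into the ``short gap'' situation and, as the scan below shows, forces $s_2=2k+3$.

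The main structural claim, which I would prove by strong induction on $n$, is that $s_j\equiv -1\pmod{k+2}$ for every $j\le n+1$ and, simultaneously, that $\alpha_n$ equals the $n$-th letter of $\tau_k^{\infty}(1)$. Granting $s_1,\dots,s_n\equiv -1\pmod{k+2}$, every sum $s_i+s_j$ with $i,j\le n$ is $\equiv -2\equiv k\pmod{k+2}$. The key observation is that any star lying in the open gap $(s_n,s_{n+1})$ must be such a sum: if $p\in S+S$ and $p<s_{n+1}$, writing $p=s_i+s_j$ with $i\le j$ gives $s_j<p<s_{n+1}$, so $j\le n$. Consequently the only positions of the gap that can carry a star are those $\equiv k\pmod{k+2}$, and the first two of these, measured from $s_n$, sit at offsets $k+1$ and $2k+3$. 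Moreover both are genuinely stars, since $s_n+(k+1)=s_n+s_1$ and $s_n+(2k+3)=s_n+s_2$ lie in $S+S$ (the latter using $n\ge 2$, which holds whenever the letter $2$ occurs because $\tau_k^{\infty}(1)$ starts with $1$).

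With the candidate stars pinned down, I would run the greedy scan from $s_n+1$ upward: a position in $S+S$ receives a star and consumes no symbol of $\mathbf{p}^{(k)}$, while every other position consumes the next symbol of $\mathbf{p}^{(k)}$, which by Lemma~\ref{lem:mu} spells $0^{\mu_n}1$ with $\mu_n\in\{k,2k+1\}$. If $\mu_n=k$, the $k$ forced zeros fill offsets $1,\dots,k$, the star at offset $k+1$ is inserted, and the $1$ is consumed at offset $k+2$; thus $\alpha_n=1$ and $s_{n+1}\equiv -1\pmod{k+2}$. If $\mu_n=2k+1$, the zeros fill offsets $1,\dots,k$ and $k+2,\dots,2k+2$, stars are inserted at offsets $k+1$ and $2k+3$, and the $1$ is consumed at offset $2k+4$; thus $\alpha_n=2$ and again $s_{n+1}\equiv -1\pmod{k+2}$. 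In both cases no further residue-$k$ position occurs before the next $1$, so these are all the stars of the gap. Since the letter of $\tau_k^{\infty}(1)$ that produces $\mu_n=k$ (resp.\ $\mu_n=2k+1$) via $\rho_2$ is exactly $1$ (resp.\ $2$), the value of $\alpha_n$ coincides with that letter; this closes the induction and yields $(\alpha_n)_{n\ge 1}=\tau_k^{\infty}(1)$.

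The step I expect to require the most care is the greedy scan itself: one must argue that the star-status of every position strictly below $s_{n+1}$ is already decided by $s_1,\dots,s_n$ (so that it does not depend on elements of $S$ placed later) and that no accidental star can occur at a position $\not\equiv k\pmod{k+2}$. Both points follow from the residue invariant and the inequality $s_j<s_i+s_j$, but they must be stated precisely in order to make the interleaving of the $\mu_n$ forced zeros with the $\alpha_n$ forced stars rigorous. The role of the hypothesis $k\ge 2$ is exactly to guarantee that $\tau_k^{\infty}(1)$ begins with $1$, which both starts the induction in the short-gap case and makes $s_2=2k+3$ available before any long gap is encountered; for $k=1$ the first gap is already long and the residue pattern fails, which is why the period-doubling case is treated separately.
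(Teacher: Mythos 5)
Your proof is correct and follows essentially the same route as the paper's: an induction on the gaps with a residue invariant modulo $k+2$ (you phrase it as $s_j\equiv -1\pmod{k+2}$, the paper as $x\equiv k\pmod{k+2}$ for all $x\in S_n+S_n$, which is the same thing), a case split on $\mu_n\in\{k,2k+1\}$ via Lemma~\ref{lem:mu}, and identification of the stars in each gap as $s_n+s_1$ and $s_n+s_2$. Your write-up is in fact somewhat more explicit than the paper's on two points it leaves implicit, namely that every star below $s_{n+1}$ is already forced by $s_1,\dots,s_n$, and that $s_2$ exists before the first long gap because $\tau_k^{\infty}(1)$ begins with $1$ when $k\geq 2$.
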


\begin{proof}
We prove that for all $n\geq 1$,
\begin{equation}\label{eq:a1}
\left\{
\begin{aligned}
\alpha_n &=\rho_2^{-1}(\mu_n);\\
\forall x &\in S_n+S_n,\ x\equiv \modd{k} {k+2},
\end{aligned}
\right.
\end{equation}
where $S_n=\{s_1,\ldots,s_n\}$.
% and $s_{i+1}=s_{i}+\mu_i+\alpha_i+1$ for $i=1,2,\ldots,n$.

Since $0^k10^k1$ is a prefix of $\mathbf{p}^{(k)}$ when $k\geq 2$, we have $s_1=k+1$, $s_2=2k+3$ and $\alpha_1=1=\rho_2^{-1}(\mu_1)$. So Eq.~\eqref{eq:a1} holds for $n=1$. Assume that Eq.~\eqref{eq:a1} holds for all $m\leq n$. By the inductive assumption, $v_{s_n+s_1}=*$ and $v_{s_n+s_1\pm i}\neq \ast$ for $i=1,\ldots,k$. By Lemma \ref{lem:mu}, we know that $\mu_j\in\{k,2k+1\}$ for all $j\geq 1$.

\textit{Case I.} If $\mu_{n+1}=k$, then $v_{s_n+i}=0$ for $i=1,\ldots,k$ and $v_{s_n+k+2}=1$. So $\alpha_{n+1}=1=\rho_2^{-1}(\mu_{n+1})$ and $s_{n+1}=s_{n}+k+2$. Therefore, in this case, Eq.~\eqref{eq:a1} holds for $n+1$.

\textit{Case II.} If $\mu_{n+1}=2k+1$, by the inductive assumption, $v_{s_n+s_1}=*$, $v_{s_n+s_2}=*$ and $v_{s_n+s_1\pm i}=0$ for $i=1,\ldots,k$. So $v_{s_n+2k+4}=1$ and $s_{n+1}=s_{n}+2k+4$. It follows that $\alpha_{n+1}=2=\rho_2^{-1}(\mu_{n+1})$ and $S_{n+1}=S_n\cup\{s_n+2k+4\}$, which implies that Eq.~\eqref{eq:a1} holds for $n+1$ in this case. 

By induction, we see Eq.~\eqref{eq:a1} holds for all $n\geq 1$ and this completes the proof.
\end{proof}

\begin{remark}
The stars occur periodically in $(v_n)_{n\geq 1}$. 
Actually, $v_n= *$ if and only if $n\equiv \modd{k} {k+2}$.
\end{remark}

\subsection{The gaps for stars when $k=1$}
In this case, we will show that the stars occur periodically in $(v_n)_{n\geq 1}$ with period $6$. The initial values of $(v_n)_{n\geq 1}$ are
\begin{center}
\begin{tabular}{c|*{13}{c}}
$n$ & 1 & 2 & 3 & 4 & 5 & 6 & 7 & 8 & 9 & 10 & 11 & 12 & 13\\
\hline
$v_n$ & 0 & 1 & 0 & $*$ & 0 & 0 & 1 & 0 & $*$ & 1 & 0 & $*$ & 1\\
\end{tabular}
\end{center}

\begin{lemma}\label{lem:vk=1}
Set $S_n:=\{i\mid v_i=1, 1\leq i\leq 14n+13\}$ and $\tilde{S}_n:=\{x~(\mathrm{mod}~14)\mid x\in S_n+S_n, x> 13\}$ for $n\geq 0$. Then for all $n\geq 1$
we have
\begin{itemize}
\item[(a)] for $0\leq j\leq 13$, $v_{14n+j}=*$ if and only if $j\in\mathcal{I}_*:=\{0,1,3,6,9,12\}$;
\item[(b)] $v_{14n+4}=0$, $\{14n+7,14n+13\}\subset S_n$ and $\tilde{S}_n= \mathcal{I}_* $.
\end{itemize}
\end{lemma}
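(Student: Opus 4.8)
The plan is to prove Lemma~\ref{lem:vk=1} by induction on $n$, simultaneously establishing both statements (a) and (b), which must be carried together because the star-positions claimed in (a) are exactly the residues of the sumset elements recorded in (b). First I would verify the base case by extending the displayed table of initial values far enough to read off $v_j$ for $1\le j\le 27$ (that is, through $n=1$), directly confirming that the stars among $v_{14+j}$ for $0\le j\le 13$ occur precisely at $j\in\mathcal{I}_*=\{0,1,3,6,9,12\}$, that $v_{18}=0$, that $\{21,27\}\subset S_1$, and that $\tilde S_1=\mathcal{I}_*$. The computation of these initial values rests on the bijection $\theta$ from Section~\ref{theta-sec} applied to the concrete prefix of $\mathbf{p}^{(1)}$, the classical period-doubling sequence.

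The inductive step is where the real work lies. Assuming (a) and (b) hold for all indices up to $n$, I would determine the positions of the $1$'s of $\mathbf{p}^{(1)}$ in the window $[14n+14,\,14n+27]$ using the recurrence~\eqref{eq:gpd} for $k=1$, together with Lemma~\ref{lem:mu}, which pins down the gaps $\mu_j\in\{1,3\}$ via $\rho_2(\tau_1^{\infty}(1))$. From the positions of the $1$'s I would compute the new sumset contributions: each newly admitted element $s_m$ of $S$ produces stars at $s_m+s_\ell$ for $\ell\le m$, and the key point is that these new sums land at positions whose residues modulo $14$ stay inside $\mathcal{I}_*$. Concretely, I would show that $\tilde S_{n+1}=\tilde S_n=\mathcal{I}_*$ by checking that every new sum $s_m+s_\ell$ exceeding $13$ is congruent modulo $14$ to one of the six residues in $\mathcal{I}_*$, while no sum ever produces a residue outside this set; this self-consistency is what forces the period-$14$ structure. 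Having fixed which residues carry stars, statement (a) for $n+1$ follows immediately, and the auxiliary facts $v_{14(n+1)+4}=0$ and $\{14(n+1)+7,\,14(n+1)+13\}\subset S_{n+1}$ are read off from the $1$-positions computed at the start of the step.

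The main obstacle I anticipate is the bookkeeping in the sumset computation: unlike the $k\ge 2$ case treated in Lemma~\ref{lem:alpha}, where the stars satisfy the clean congruence $n\equiv k\pmod{k+2}$ and each gap contributes predictably, the $k=1$ case has the larger period $14$ and a richer set $\mathcal{I}_*$ of six residues, so one must verify that the various sums $s_m+s_\ell$ interact to reproduce exactly this residue pattern with no leakage. The delicate part is ruling out spurious stars, namely showing that a sum falling into the window $[14(n+1),\,14(n+1)+13]$ cannot land on a residue outside $\mathcal{I}_*$; this requires understanding how the two possible gap sizes $1$ and $3$ distribute the elements of $S$ among the residue classes modulo $14$, and confirming that all pairwise sums of these classes collapse into $\mathcal{I}_*$. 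Once this residue-closure property is established, the periodicity of the stars — and hence the parallel to the $k\ge 2$ remark — follows, and the induction closes cleanly.
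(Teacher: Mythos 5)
Your overall strategy is the same as the paper's: a simultaneous induction on $n$ carrying (a) and (b) together, a base case checked by hand through $v_{27}$, and an inductive step that combines the gap structure $\mu_j\in\{1,3\}$ (from Lemma~\ref{lem:mu}) with residue arithmetic modulo $14$. However, as described, your plan has two genuine gaps. The first is an ordering problem: you propose to locate the new $1$'s in the window $[14n+14,14n+27]$ \emph{first}, ``using the recurrence~\eqref{eq:gpd} together with Lemma~\ref{lem:mu},'' and only then compute the sums and deduce (a). But the gaps $\mu_j$ count only the $0$'s between consecutive $1$'s of $\mathbf{v}$, with stars interspersed, so you cannot place the $1$'s without already knowing which positions of the window carry stars. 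The paper breaks this circle in the opposite order: it proves (a) for the window $[14(n+1),14(n+1)+13]$ first, using only sums of \emph{old} elements (the hypothesis $\tilde S_n=\mathcal{I}_*$ keeps old sums inside $\mathcal{I}_*$, while $S_0+\{14n+7,14n+13\}$ with $S_0=\{2,7,10,13\}$ puts stars on every residue of $\mathcal{I}_*$), and only afterwards threads the gaps through the now-known star positions.

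The second gap is more serious: you assert that $v_{14(n+1)+4}=0$ and $\{14(n+1)+7,14(n+1)+13\}\subset S_{n+1}$ can be ``read off'' from the $1$-positions, and that the residue-closure check is pure bookkeeping. It is not. Since $(\mu_i)_{i\geq 1}$ is the fixed point of $3\to 311$, $1\to 3$, the factor $311$ does occur, so if $v_{14n+7}$ is the $\ell$-th $1$, the local pattern $\mu_\ell=3$, $\mu_{\ell+1}=\mu_{\ell+2}=1$ is a priori possible; under that pattern the subsequent $1$'s land at $14n+13$, then $14(n+1)+4$, then $14(n+1)+7$. A $1$ at residue $4$ immediately ruins your residue-closure claim, because $[14(n+1)+4]+7$ is a star at residue $11\notin\mathcal{I}_*$; no analysis of ``how the gap sizes distribute elements among residue classes'' can exclude this configuration, since the gap structure alone permits it. The paper excludes it by a contradiction that uses the sum-free interaction itself: $\mu_{\ell+1}=\mu_{\ell+2}=1$ forces $\mu_{\ell+3}=3$ (as $111$ is not a factor of $(\mu_i)$), which requires $v_{14(n+1)+11}=0$, while $v_7=1$ and $v_{14(n+1)+4}=1$ force $v_{14(n+1)+11}=*$ because $14(n+1)+11=[14(n+1)+4]+7$. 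This collision argument is the crux of the inductive step, it is what actually establishes $v_{14(n+1)+4}=0$, and it is absent from your plan; without it the induction does not close.
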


\begin{proof}
From the initial values $v_{1},\dots,v_{13}$, we know that $S_0:=\{2,7,10,13\}\subset S$. Since \[S_0+S_0=\{4,9,12,14,15,17,20,23,26\},\] we see that (a) holds for $n=1$.
Recall that by Lemma \ref{lem:mu}, when $k=1$,
the sequence $(\mu_i)_{i\geq 1}$ is the fixed point of the morphism sending
$3\to 311$ and $1\to 3$. Therefore, $\mu_4=\mu_5=3$ yields that $S_1=S_0\cup\{21,27\}$, which implies that (b) holds for $n=1$.

Now assume that (a) and (b) hold for $m\leq n$. We shall show the validity of them for $n+1$. Using the inductive hypothesis (b) for $n$, $\tilde{S}_n= \mathcal{I}_* $ and \[S_0+S_n\supset S_0+\{14n+7,14n+13\}=\{14n+j\mid  j=9,14,15,17,20,23,26\},\] which imply that $v_{14(n+1)+j}=*$ if and only if $j\in\mathcal{I}_*$. So (a) holds for $n+1$.

Applying (a) for $n$ and $n+1$, we have the following table
\[
\begin{tabular}{c|*{14}c}
$j$ & 0 & 1 & 2 & 3 & 4 & 5 & 6 & 7 & 8 & 9 & 10 & 11 & 12 & 13\\
\hline
$v_{14n+j}$ & $*$ & $*$ &  &  $*$ &   &  & $*$ & 1 &  & $*$ & & & $*$ & 1 \\
$v_{14(n+1)+j}$ & $*$ & $*$ &  &  $*$ &  &  & $*$ &  &  & $*$ & & & $*$ &
\end{tabular}\]
Suppose $v_{14n+7}$ is the $\ell$-th `$1$' in $\mathbf{p}^{(1)}$. There are two cases $\mu_{\ell}=1$ and $\mu_{\ell}=3$.
\begin{itemize}
\item If $\mu_{\ell}=1$, then $\mu_{\ell+1}$ must be $1$ since $v_{14n+13}=1$. Further, $\mu_{\ell+2}$ must be $3$ since `$111$' is not a factor of $(\mu_{n})_{n\geq 1}$, which implies that $v_{14(n+1)+7}=1$ and $v_{14(n+1)+4}=0$. If $\mu_{\ell+3}=3$, then $v_{14(n+1)+13}=1$. If $\mu_{\ell+3}=1$, then $\mu_{\ell+4}=1$. We also have $v_{14(n+1)+13}=1$.
\item If $\mu_{\ell}=3$, then either $\mu_{\ell+1}=3$ or $\mu_{\ell+1}=\mu_{\ell+2}=1$. In both cases, we have $v_{14(n+1)+7}=1$. When $\mu_{\ell+1}=3$, either $\mu_{\ell+2}=3$ or $\mu_{\ell+2}=\mu_{\ell+3}=1$. In both cases, $v_{14(n+1)+13}=1$. When $\mu_{\ell+1}=\mu_{\ell+2}=1$, we have $v_{14(n+1)+4}=1$ and $\mu_{\ell+3}=3$. Note that $\mu_{\ell+3}=3$ indicates $v_{14(n+1)+11}=0$. However, $v_{7}=1$ and $v_{14(n+1)+4}=1$ yields $v_{14(n+1)+11}=*$ since $14(n+1)+11=[14(n+1)+4]+7$. This contradiction implies that $\mu_{\ell+1}=\mu_{\ell+2}=1$ cannot happen.
\end{itemize}
In the above two cases, we have \[S_{n+1}\subset S_n\cup\{14(n+1)+7,14(n+1)+10,14(n+1)+13\},\] which together with the inductive hypothesis $\tilde{S}_n=\mathcal{I}_*$, gives $\tilde{S}_{n+1}=\mathcal{I}_*$. So (b) also holds for $n+1$. This completes the proof.
\end{proof}

By Lemma \ref{lem:vk=1}, we are able to characterize $(\alpha_n)_{n\geq 1}$ through $(\mu_n)_{n\geq 1}$.

\begin{lemma}\label{lem:alpha-1}
Let $\alpha^{\prime}_1=4$ and $\alpha^{\prime}_n=\alpha_n$ for all $n\geq 2$. Then
\[(\alpha^{\prime}_n)_{n\geq 1}=\rho_4(\sigma_1^{\infty}(0))\] 
where $\rho_4:0\to 411,\, 1\to 42$ and $\sigma_1: 0\to 01,\, 1\to 00$.
\end{lemma}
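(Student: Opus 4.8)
The plan is to establish a correspondence between the values $\alpha_n$ (the number of stars in each gap) and the sequence $(\mu_n)_{n\geq 1}$, using the periodicity structure obtained in Lemma~\ref{lem:vk=1}. Recall that for $k=1$ the difference sequence $(\mu_n)_{n\geq 1}$ is the fixed point of the morphism $3\to 311$, $1\to 3$, which is exactly $\rho_2(\sigma_1^{\infty}(0))$ with $\rho_2: 0\to 3,\ 1\to 31$ (or the analogous coding). Since $\alpha_n=\mu_n+\alpha_n+1$ relates to $s_{n+1}-s_n$ via Eq.~\eqref{eq:sma}, and Lemma~\ref{lem:vk=1}(a) tells us that the stars occur \emph{periodically} modulo $14$ at positions $j\in\mathcal{I}_*=\{0,1,3,6,9,12\}$, the count $\alpha_n$ of stars strictly between consecutive $1$'s can be computed directly from the positions of those consecutive $1$'s, which in turn are governed by $\mu_n$.

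First I would make precise the relationship between the gap size $s_{n+1}-s_n$ and the number $\alpha_n$ of stars lying strictly inside that gap. Because the stars sit at the six residues of $\mathcal{I}_*$ within each block of length $14$, and the $1$'s sit at the complementary structure dictated by $(\mu_n)$, the number of starred positions in an interval depends only on the interval's endpoints modulo $14$ together with its length. The key observation to extract from Lemma~\ref{lem:vk=1} is that whenever $\mu_n=1$ (a short gap, $s_{n+1}-s_n=2$ after accounting for the single star) exactly one star is enclosed, i.e.\ $\alpha_n=1$, whereas when $\mu_n=3$ the gap encloses two stars, giving $\alpha_n=2$. This dichotomy, once verified against the periodic star pattern, translates the recoding: each letter $1$ in $\mu$ contributes $\alpha=1$ and each letter $3$ contributes $\alpha=2$.

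Next I would assemble these local contributions into the stated morphic description. Writing $\sigma_1:0\to01,\ 1\to00$, the sequence $\sigma_1^{\infty}(0)$ generates $(\mu_n)_{n\geq 1}$ under the coding $0\to 3,\ 1\to 31$ (matching Lemma~\ref{lem:mu}). Applying the correspondence $\mu=3\mapsto\alpha=2$ and $\mu=1\mapsto\alpha=1$ blockwise, the image of a letter $0$ (which produces $\mu$-block $3$ then the following structure) yields the $\alpha$-block $411$ once the initial-value correction at $n=1$ is installed, and the letter $1$ yields $42$; this is exactly $\rho_4:0\to411,\ 1\to42$. The special value $\alpha'_1=4$ is set by hand to absorb the boundary behaviour at the start of the sequence (where the leading $1$ at position $s_1$ sits adjacent to several stars not counted in the interior rule), and one checks against the table of initial values $v_1,\dots,v_{13}$ that $\alpha'_1=4$ is forced.

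The main obstacle I anticipate is verifying the boundary/initial segment carefully: the clean interior rule $\mu=3\mapsto\alpha=2$, $\mu=1\mapsto\alpha=1$ holds only once the periodic star pattern of Lemma~\ref{lem:vk=1} has stabilized, so the first few terms must be handled separately and reconciled with the $\alpha'_1=4$ adjustment. The rest is a routine bookkeeping argument: one fixes the block decomposition induced by $\sigma_1$, counts stars in each block using part (a) of Lemma~\ref{lem:vk=1}, and confirms that the resulting $\alpha$-blocks coincide letter-for-letter with $\rho_4(\sigma_1^{\infty}(0))$. I would phrase the final step as an induction mirroring the one in Lemma~\ref{lem:vk=1}, invoking part (b) to guarantee that the star positions in the $(n+1)$-st block of length $14$ replicate those in the $n$-th, so that the $\alpha$-count is consistent across all blocks.
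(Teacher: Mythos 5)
Your proof hinges on the claimed dichotomy that $\mu_n=1$ forces $\alpha_n=1$ while $\mu_n=3$ forces $\alpha_n=2$, but this dichotomy is false, and it contradicts the very statement you are proving: $\rho_4(\sigma_1^{\infty}(0))$ begins every block $411$ or $42$ with the letter $4$, so the value $4$ occurs in $(\alpha'_n)$ infinitely often, whereas your rule would confine $(\alpha_n)_{n\geq 2}$ to $\{1,2\}$. Applying your rule blockwise to the $\mu$-blocks $311$ and $33$ yields $211$ and $22$, not $411$ and $42$, and the ``initial-value correction at $n=1$'' cannot repair this, since it changes only a single term. (Your rule would also contradict Theorem \ref{thm:A}: with $\alpha_n\in\{1,2\}$ the differences $d_n=\mu_n+\alpha_n+1$ would lie in $\{3,6\}$, yet $8$ occurs infinitely often in $\rho_8(\sigma_1^{\infty}(0))$.)

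The missing idea is that the number of stars in a gap with $\mu_n=3$ depends on where that gap sits inside the period-$14$ star pattern of Lemma \ref{lem:vk=1}. The paper's proof aligns the block decomposition of $(\mu_n)$ over $\{311,33\}$, namely $(\mu_n)_{n\geq 1}=\rho_3(\sigma_1^{\infty}(0))$ with $\rho_3\colon 0\to 311,\ 1\to 33$ (note that your proposed coding $0\to 3,\ 1\to 31$ does not reproduce $(\mu_n)$), with the length-$14$ blocks of $(v_n)$: if $v_{14n-1}$ is the $\ell$-th `$1$', then the gap from $14n-1$ to the next `$1$' at $14n+7$ has $\mu_\ell=3$ but contains the \emph{four} stars at positions $14n$, $14n+1$, $14n+3$, $14n+6$, so $\alpha_\ell=4$; a gap with $\mu=3$ starting at $14n+7$ contains only the two stars at $14n+9$ and $14n+12$, giving $\alpha=2$; and gaps with $\mu=1$ give $\alpha=1$. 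This position-dependent count is exactly what produces $311\to 411$ and $33\to 42$, i.e., the coding $\rho_4$. Since your proposal never distinguishes the two kinds of $\mu=3$ gaps, the ``routine bookkeeping'' you defer to would not go through as described.
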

\begin{proof}
Note that when $k=1$, if we replace $\rho_0$ by $\tau_{1}\circ\rho_0:0\to 211,\ 1\to 22$, then
Eq.~\eqref{eq:projection} still holds. Recall that $\rho_{2}$ is the coding $1\to 1,\ 2\to 3$ when $k=1$.
Set $\rho_3:=\rho_2\circ\tau_1\circ\rho_0$ which maps $0\to 311$ and $1\to 33$. Applying Eq.~\eqref{eq:projection}, one can decompose $(\mu_n)_{n\geq 1}$ into a sequence over the alphabet $\{311,33\}$ as follows
\begin{align*}
(\mu_n)_{n\geq 1} & = \rho_{2}(\tau_{1}^{\infty}(1))\qquad\qquad\ \ \mbox{(by Lemma \ref{lem:mu})}\\
& = \rho_{2}(\tau_{1}(\tau_{1}^{\infty}(1)))\\
& = (\rho_{2}\circ\tau_{1}\circ\rho_{0})\big(\sigma_{1}^{\infty}(1)\big)\quad \mbox{(by Eq.~\eqref{eq:projection})}\\
& = \rho_3(\sigma_1^{\infty}(1)).
\end{align*}

From Lemma \ref{lem:vk=1}, we have the distribution of $(v_n)_{n\geq 1}$ as follows: for $n\geq 1$,
\[
\begin{tabular}{c|*{15}c}
$j$ & $-1$ &  0 & 1 & 2 & 3 & 4 & 5 & 6 & 7 & 8 & 9 & 10 & 11 & 12 & 13\\
\hline
$v_{14n+j}$ & 1  & $*$ & $*$ & 0 &  $*$ & 0 & 0 & $*$ & 1 & 0 & $*$ & $\diamond$ & 0 & $*$ & 1
\end{tabular}\]
where  $\diamond \in \{ 0, 1\}$. Suppose $v_{14n-1}$ is the $\ell$-th `$1$' in $\mathbf{p}^{(1)}$. Then we have $\mu_{\ell}=3$, which implies $\alpha_{\ell}=4$. If $\mu_{\ell+1}=1$, then $\mu_{\ell+2}=1$ and $\alpha_{\ell+1}=\alpha_{\ell+2}=1$. If $\mu_{\ell+1}=3$, then $\alpha_{\ell+1}=2$. Thus if we treat $(\mu_n)_{n\geq 4}$ as a sequence on $\{311,33\}$, then $(\alpha_n)_{n\geq 4}$ is a sequence on $\{411,42\}$ by projecting $311\to 411,\ 33\to 42$. This proves the  lemma.
\end{proof}

\subsection{Proof of Theorem \ref{thm:A} and \ref{thm:B}} For readers' convenience, we restate   
our Theorem \ref{thm:A} and \ref{thm:B} here.
\begin{resultA}
Let $S=(s_n)_{n\geq 1}$ be the sum-free set generated by $\mathbf{p}^{(1)}$. Set $d_1=8$ and $d_n=s_{n+1}-s_n$ for all $n\geq 2$. Then
\begin{equation}\label{eq:thm2}
\mathbf{d}:=(d_n)_{n\geq 1}=\rho_8(\sigma_1^{\infty}(0)),
\end{equation}
where $\rho_8: 0\to 833,\, 1\to 86$ and $\sigma_1: 0\to 01,\, 1\to 00$. Moreover, $\mathbf{d}$ is not $\kappa$-automatic sequence for all $\kappa\geq 2$.
\end{resultA}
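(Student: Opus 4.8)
The plan is to establish the two assertions of Theorem~\ref{thm:A} separately: first the explicit morphic formula~\eqref{eq:thm2} for the difference sequence $\mathbf{d}$, and then its non-automaticity. For the first part, I would combine the results already proved for the general case $k\geq 1$. By Eq.~\eqref{eq:sma}, each difference $d_n=s_{n+1}-s_n$ equals $\mu_n+\alpha_n+1$, so $\mathbf{d}$ is the termwise sum $(\mu_n+\alpha_n+1)_{n\geq 1}$ (with the adjustment $d_1=8$ absorbing the initial irregularity that forces $\alpha_1'=4$). Specializing Lemma~\ref{lem:mu} to $k=1$ gives $(\mu_n)_{n\geq 1}=\rho_2(\tau_1^\infty(1))$, which by the computation in the proof of Lemma~\ref{lem:alpha-1} equals $\rho_3(\sigma_1^\infty(1))$, a sequence over $\{311,33\}$; and Lemma~\ref{lem:alpha-1} gives $(\alpha_n')_{n\geq 1}=\rho_4(\sigma_1^\infty(0))$, a sequence over $\{411,42\}$. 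Adding these blockwise and adding $1$ to each coordinate, the block $311$ paired with $411$ contributes $8,3,3$ and the block $33$ paired with $42$ contributes $8,6$; thus $\mathbf{d}$ is exactly the image of $\sigma_1^\infty(0)$ under the morphism $\rho_8\colon 0\mapsto 833$, $1\mapsto 86$, which is~\eqref{eq:thm2}.

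For the non-automaticity, the key observation is that $\mathbf{d}=\rho_8(\sigma_1^\infty(0))$ is obtained from the period-doubling sequence $\mathbf{p}^{(1)}=\sigma_1^\infty(0)$ by applying a \emph{non-uniform} coded morphism (images have lengths $3$ and $2$). The strategy is to show that if $\mathbf{d}$ were $\kappa$-automatic for some $\kappa\geq 2$, one could recover from it a contradiction with a known rigidity property of the period-doubling sequence. Concretely, I would count occurrences: in any prefix of $\mathbf{d}$ the symbol $6$ occurs exactly once per image of a letter $1$ under $\rho_8$, and the letters $6$ partition $\mathbf{d}$ into blocks whose lengths lie in $\{2,3\}$ and record, via the pattern of $3$'s, the underlying letters of $\mathbf{p}^{(1)}$. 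Hence $\mathbf{p}^{(1)}$ is recoverable from $\mathbf{d}$ by a finite-state (letter-to-letter after blocking) decoding. Since the class of $\kappa$-automatic sequences over a fixed $\kappa$ is closed under such finite-state transductions and under taking images of positions along an affine progression, automaticity of $\mathbf{d}$ would force the \emph{positions} of a distinguished symbol to form a $\kappa$-automatic set. The crucial point is that the positions of the letter $6$ in $\mathbf{d}$ are the partial sums $\big(\sum_{i<n}|\rho_8(p_i)|\big)$, i.e. they grow like a weighted count governed by the number of $1$'s among $p_0,\dots,p_{n-1}$, and this counting function is not $\kappa$-regular.

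The heart of the argument, and the step I expect to be the main obstacle, is making precise the clash between the base $\kappa$ of the hypothetical automaton and the base $2$ underlying $\mathbf{p}^{(1)}$. Two routes are available. The first uses Cobham's theorem: the period-doubling sequence is $2$-automatic but not ultimately periodic, so it is not $\kappa$-automatic for any multiplicatively independent $\kappa$; if the decoding of $\mathbf{d}$ back to $\mathbf{p}^{(1)}$ preserves $\kappa$-automaticity, then $\mathbf{d}$ being $\kappa$-automatic for $\kappa$ a non-power of $2$ is immediately excluded, and the remaining case $\kappa=2^t$ must be ruled out by a direct kernel computation showing the non-uniform letter-lengths destroy the finiteness of the $\kappa$-kernel. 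The second, cleaner route is to argue directly on the gap structure: the positions of $6$ in $\mathbf{d}$ have successive gaps in $\{2,3\}$ forming precisely $\rho_0$-type data, and a $\kappa$-automatic sequence has the property that the gaps between consecutive occurrences of any fixed letter, read in base $\kappa$, form a $\kappa$-automatic sequence, so it suffices to exhibit that the gap sequence here is not $\kappa$-automatic for any $\kappa$. I would reduce this to Theorem~\ref{thm:C} (the non-automaticity of $\tau_k^\infty(1)$) specialized to $k=1$, since the gap data is, up to a fixed coding, the sequence $\tau_1^\infty(1)$; any finite-state relationship between $\mathbf{d}$ and $\tau_1^\infty(1)$ then transfers non-automaticity from the latter to the former. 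Verifying that this transfer respects a \emph{single fixed} base $\kappa$ — rather than merely some base — is the delicate bookkeeping that the full proof must supply.
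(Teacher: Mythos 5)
Your derivation of the formula \eqref{eq:thm2} is correct and is essentially the paper's own argument: combine Eq.~\eqref{eq:sma} with Lemma~\ref{lem:mu} and Lemma~\ref{lem:alpha-1}, and add the blocks termwise ($311$ with $411$ giving $833$, and $33$ with $42$ giving $86$, after adding $1$ to each coordinate). The non-automaticity half, however, contains genuine gaps in both of your proposed routes, and your ``second, cleaner route'' rests on a false lemma. You assert that in a $\kappa$-automatic sequence the gaps between consecutive occurrences of a fixed letter form a $\kappa$-automatic sequence. This is false, and the counterexample is precisely the phenomenon this paper is about: the period-doubling sequence $\mathbf{p}^{(1)}=\sigma_1^{\infty}(0)$ is $2$-automatic, yet the sequence of gaps between consecutive occurrences of `$1$' in it is $(\mu_n)_{n\geq 1}=\rho_2(\tau_1^{\infty}(1))$ (Lemma~\ref{lem:mu}), a coding of $\tau_1^{\infty}(1)$, which is not $\kappa$-automatic for any $\kappa\geq 2$ (Theorem~\ref{thm:C}; \cite{AAS}). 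Passing from a sequence to its gap sequence re-indexes by occurrence number rather than by position, and this re-indexing is not a finite-state operation; if your transfer principle were valid, Theorem~\ref{thm:A} itself would be false. Your first route (via Cobham--Durand) fails at a different point: it needs the unproved claim that the non-uniform decoding of $\mathbf{d}$ back to $\mathbf{p}^{(1)}$ (compressing blocks $833$ and $86$, of lengths $3$ and $2$, to single letters) preserves $\kappa$-automaticity --- but non-uniform transductions do not preserve automaticity in general, the map $\rho_8$ itself being the forward counterexample --- and even granting that claim, you explicitly defer the essential case $\kappa=2^t$ to ``a direct kernel computation'' that is never performed. That case is the entire content of the theorem.

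What is missing is a \emph{position-preserving} (letter-to-letter) reduction, and that is exactly what the paper uses: let $\psi$ be the coding $8\to 8$, $3\to 3$, $6\to 8$. Then $\psi(\mathbf{d})$ is the iterative fixed point of the morphism $8\to 833$, $3\to 8$, which was shown in \cite{AAS} (with the extension to all bases via \cite{Durand, Rampersad-Stipulanti}) not to be $\kappa$-automatic for any $\kappa\geq 2$. Since a coding takes a $\kappa$-automatic sequence to a $\kappa$-automatic sequence \emph{for the same} $\kappa$ --- precisely the fixed-base bookkeeping you flagged as delicate, which becomes trivial here because a coding does not move positions --- it follows at once that $\mathbf{d}$ is not $\kappa$-automatic for any $\kappa\geq 2$. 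Equivalently, the coding $8\to 2$, $6\to 2$, $3\to 1$ sends $\mathbf{d}$ to $\tau_1^{\infty}(1)$ (by the identification $\tau_1\circ\rho_0: 0\to 211$, $1\to 22$ used in the proof of Lemma~\ref{lem:alpha-1}), so you could instead invoke Theorem~\ref{thm:C} with $k=1$. Either one-line coding argument replaces both of your routes.
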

\begin{proof}
The formula \eqref{eq:thm2} follows from Eq.~\eqref{eq:sma}, Lemma \ref{lem:mu} and Lemma \ref{lem:alpha-1}.

Let $\psi$ be the coding $8\to 8,\ 3\to3$, and $6\to 8$. Then $\psi(\mathbf{d})$   is the fixed point of the morphism $8\to 833$ and $3\to 8$.  Allouche, Allouche, and Shallit \cite{AAS} in 2006 showed that this sequence is not $\kappa$-automatic sequence for all $\kappa\geq 2$. So $\mathbf{d}$ is also not an automatic sequence.
\end{proof}

\begin{resultB}
Let $S=(s_n)_{n\geq 1}$ be the sum-free set generated by $\mathbf{p}^{(k)}$ 
where $k\geq 2$. Then \[(s_{n+1}-s_n)_{n\geq 1}=\rho_1(\tau_{k}^{\infty}(1)),\] where $\rho_1$ is the coding $1\to k+2$ and $2\to 2k+4$.
\end{resultB}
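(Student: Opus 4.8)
The plan is to combine the fundamental gap formula \eqref{eq:sma}, namely $s_{n+1}-s_n=\mu_n+\alpha_n+1$, with the two structural lemmas already established for the case $k\geq 2$: Lemma \ref{lem:mu}, which gives $(\mu_n)_{n\geq 1}=\rho_2(\tau_k^\infty(1))$, and Lemma \ref{lem:alpha}, which gives $(\alpha_n)_{n\geq 1}=\tau_k^\infty(1)$. First I would fix an index $n$ and write $\mathbf{t}=(t_n)_{n\geq 1}:=\tau_k^\infty(1)$, so that $t_n\in\{1,2\}$ for every $n$. By Lemma \ref{lem:alpha} we have $\alpha_n=t_n$, and by Lemma \ref{lem:mu} together with the definition of the coding $\rho_2$ (sending $1\to k$ and $2\to 2k+1$) we have $\mu_n=\rho_2(t_n)$, meaning $\mu_n=k$ when $t_n=1$ and $\mu_n=2k+1$ when $t_n=2$.

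The core of the argument is then a purely local case analysis on the single letter $t_n$, substituting into \eqref{eq:sma}. When $t_n=1$ we get $s_{n+1}-s_n=\mu_n+\alpha_n+1=k+1+1=k+2$, which is exactly $\rho_1(1)$. When $t_n=2$ we get $s_{n+1}-s_n=(2k+1)+2+1=2k+4$, which is exactly $\rho_1(2)$. Since $\rho_1$ is the coding $1\to k+2$, $2\to 2k+4$, these two computations say precisely that the $n$-th gap equals $\rho_1(t_n)$ for each $n$. As this holds letterwise for every $n\geq 1$, I would conclude
\begin{equation*}
(s_{n+1}-s_n)_{n\geq 1}=\bigl(\rho_1(t_n)\bigr)_{n\geq 1}=\rho_1\bigl(\tau_k^\infty(1)\bigr),
\end{equation*}
which is the claim.

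There is essentially no obstacle remaining at the level of Theorem \ref{thm:B} itself: once Lemmas \ref{lem:mu} and \ref{lem:alpha} are in hand, the theorem is an immediate letterwise substitution into \eqref{eq:sma}, and the only care needed is to verify that the arithmetic $k+2$ and $2k+4$ matches the stated coding $\rho_1$ and that the two codings $\rho_1,\rho_2$ are not confused. The genuine difficulty has already been absorbed into the proof of Lemma \ref{lem:alpha}, where one must simultaneously track the insertion of stars and maintain the invariant that every element of $S_n+S_n$ satisfies $x\equiv k\pmod{k+2}$; that invariant is what guarantees the stars fall in predictable positions and hence that $\alpha_{n+1}$ takes the value forced by $\mu_{n+1}$ in each of the two cases. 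I would therefore present the proof of Theorem \ref{thm:B} tersely, simply citing \eqref{eq:sma}, Lemma \ref{lem:mu}, and Lemma \ref{lem:alpha} and carrying out the two-line case split, exactly as the analogous Theorem \ref{thm:A} was deduced from \eqref{eq:sma}, Lemma \ref{lem:mu}, and Lemma \ref{lem:alpha-1}.
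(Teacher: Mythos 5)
Your proposal is correct and is essentially identical to the paper's own proof, which deduces Theorem \ref{thm:B} directly from Eq.~\eqref{eq:sma}, Lemma \ref{lem:mu}, and Lemma \ref{lem:alpha}; you have merely made explicit the two-case letterwise arithmetic ($k+1+1=k+2$ and $(2k+1)+2+1=2k+4$) that the paper leaves implicit. Your assessment that the real work lies in Lemma \ref{lem:alpha} also matches the paper's structure.
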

\begin{proof}
The result follows from Eq.~\eqref{eq:sma}, Lemma \ref{lem:mu} and Lemma \ref{lem:alpha}.
\end{proof}

\section{Non-automaticity of \texorpdfstring{$\tau_k^{\infty}(1)$}{tau\_k\^{}{infty}(1)}}
\label{four}

Here we prove that $\tau_k^{\infty}(1)$ is not an automatic sequence.
\begin{resultC}
$\tau_{k}^{\infty}(1)$ is not a $\kappa$-automatic sequence for any $\kappa\geq 2$.
\end{resultC}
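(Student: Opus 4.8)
The plan is to show that $\tau_k^\infty(1)$ is not $\kappa$-automatic by exploiting the structure of the factor $22$ (or more precisely the occurrences of the letter $2$) inside $\tau_k^\infty(1)$, and then deriving a contradiction with the known rigidity of $\kappa$-automatic sequences. The first observation I would record is that, by the definition of $\tau_k$, the letter $2$ appears exactly once in $\tau_k(1)=1^{k-1}2$ and twice in $\tau_k(2)=1^{k-1}21^{k+1}$... so the positions of the $2$'s encode a ``renormalized'' copy of $\tau_k^\infty(1)$ itself. Concretely, if one reads off the word obtained by recording, for each occurrence of $2$, whether it came from a $1$ or a $2$ of the previous level, one recovers $\tau_k^\infty(1)$ again. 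I would make this precise by computing the gaps between consecutive $2$'s: a short gap (coming from the internal structure of $\tau_k(2)$) versus a long gap, so that the gap sequence between consecutive $2$'s is itself, up to a coding, equal to $\tau_k^\infty(1)$.

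The core of the argument is then a self-similarity / desubstitution step. I would define the sequence $\mathbf{g}=(g_m)_{m\ge 1}$ recording the gaps between the $m$-th and $(m+1)$-th occurrence of $2$ in $\mathbf{u}:=\tau_k^\infty(1)$, and argue that $\mathbf{g}$ takes exactly two values, say $a$ (short) and $b$ (long), and that the induced two-letter sequence is again $\mathbf{u}$ under the relabeling $a\mapsto 1$, $b\mapsto 2$. This gives a functional equation expressing the positions of the $2$'s in terms of $\mathbf{u}$. Assuming for contradiction that $\mathbf{u}$ is $\kappa$-automatic, the set of positions of $2$'s would be a $\kappa$-automatic set, and its difference (gap) sequence would again be $\kappa$-automatic; combined with the self-similarity this would force $\mathbf{u}$ to satisfy a contradictory growth/frequency constraint. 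The cleanest route is to track the positions of the $2$'s: the $N$-th occurrence of $2$ sits at a position that grows like a fixed irrational multiple of $N$ plus a bounded fluctuation governed by $\mathbf{u}$, and I would extract from this an affine relation incompatible with automaticity.

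The technically cleanest contradiction, and the one I would actually push through, uses frequencies and the theorem that a $\kappa$-automatic sequence has rational letter frequencies (when they exist), or more robustly, that the positions of a given letter in a $\kappa$-automatic sequence form a $\kappa$-automatic set whose counting function is well understood. Here I would compute the frequency of the letter $2$ in $\mathbf{u}$ from the incidence matrix of $\tau_k$, namely $M=\begin{pmatrix} k-1 & k-1\\ 1 & 2\end{pmatrix}$ acting on letter counts; its Perron eigenvalue $\lambda=(k+1+\sqrt{(k+1)^2-4})/2$ is a quadratic irrational, and the frequency of $2$ is the corresponding (irrational) eigenvector coordinate. Since an automatic sequence that is the image of a fixed point of a substitution with a quadratic-irrational dominant eigenvalue cannot be automatic unless the substitution degenerates, one gets a contradiction; alternatively, I would invoke the fact that the gap/summatory behaviour of the $2$'s is governed by $\lambda$ and apply Cobham-type rigidity.

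I expect the main obstacle to be making the ``the gap sequence of the $2$'s is again $\mathbf{u}$'' step fully rigorous and uniform in $k$, since the boundary behaviour (the very first block $1^{k-1}$, and the overlap between the images of adjacent letters) requires careful bookkeeping of where each $2$ originates; the cleanest fix is to prove the desubstitution identity by induction on the level $n$ using the already-established relation $\tau_k^{n+1}(1)=\rho_0(\sigma_k^n(0))$ from Eq.~\eqref{eq:tau-n}, which directly ties the $2$-structure of $\mathbf{u}$ to the $1$-structure of the period-$k$-folding sequence, and then to transfer any known non-automaticity statement about $\sigma_k^\infty(0)$-derived non-uniform images (as in the $k=1$ case handled via \cite{AAS}) to general $k$.
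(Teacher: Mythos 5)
Your desubstitution observation is correct: in $\tau_k^{\infty}(1)$ each image block $\tau_k(1)=1^{k-1}2$ and $\tau_k(2)=1^{k-1}21^{k+1}$ contains exactly one $2$, so the gap sequence between consecutive $2$'s is the coding $1\to k$, $2\to 2k+1$ applied to $\tau_k^{\infty}(1)$ itself (this is essentially the content of Lemmas \ref{lem:gamma} and \ref{lem:mu} of the paper). But the contradiction you build on top of it collapses because of a computational error. The incidence matrix of $\tau_k$ is not $\begin{pmatrix} k-1 & k-1\\ 1 & 2\end{pmatrix}$: since $\tau_k(2)$ contains $2k$ ones and one $2$, the matrix is $M_k=\begin{pmatrix} k-1 & 2k\\ 1 & 1\end{pmatrix}$, whose characteristic polynomial is $(\lambda-(k+1))(\lambda+1)$. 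The Perron eigenvalue is the \emph{integer} $k+1$, not a quadratic irrational; the frequency of the letter $2$ is $1/(k+1)$, which is rational; and the $N$-th occurrence of $2$ sits near position $(k+1)N$, an integer (not irrational) multiple of $N$. Consequently every frequency-based, growth-based, or Cobham-rigidity-based criterion you invoke is silent here: all of these invariants of $\tau_k^{\infty}(1)$ are exactly what one would see in a genuine $(k+1)$-automatic sequence. Note also that self-similarity of the type you describe holds just as well for fixed points of uniform morphisms (in Thue--Morse, the positions of one letter likewise reproduce the sequence up to a sliding-block code), so it cannot by itself contradict automaticity.

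The deeper, structural problem is this: precisely because the dominant eigenvalue equals $k+1$, Durand's theorem only reduces the task to proving that $\tau_k^{\infty}(1)$ is not $(k+1)$-automatic --- that is the paper's Lemma \ref{simplification}, and it is the easy part. The hard part, which your proposal leaves entirely untouched, is to show that this fixed point of a \emph{non-uniform} morphism, whose eigenvalue happens to equal $k+1$, still fails to be $(k+1)$-automatic. The paper does this by building a bespoke numeration system on the numbers $W_k(n)=\bigl((k+1)^n-(-1)^n\bigr)/(k+2)$ (Lemma \ref{elementary}), characterizing the positions of the letter $2$ by the parity of the number of trailing zeros in the valid $W$-expansion (Proposition \ref{value-1-2}), and then exhibiting, for any $0\le c<c'$, explicit indices at which the kernel sequences $\bigl(t_k((k+1)^{2c}n-1)\bigr)_{n\geq 0}$ and $\bigl(t_k((k+1)^{2c'}n-1)\bigr)_{n\geq 0}$ differ (Lemma \ref{construction} and the proof of Theorem \ref{thm:C}), so that the $(k+1)$-kernel is infinite. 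Your final fallback --- to ``transfer any known non-automaticity statement'' about images of $\sigma_k^{\infty}(0)$ --- is circular for $k\ge 2$: no such statement exists prior to this theorem, the case $k=1$ treated in \cite{AAS} being the only one in the literature. As it stands, the proposal contains no mechanism that could rule out $(k+1)$-automaticity, and that is the entire content of the theorem.
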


In what follows $\tau_k$ is the morphism defined over the alphabet $\{1, 2\}$ by
$\tau_k(1) = 1^{k-1} 2$, $\tau_k(2) = 1^{k-1} 2 1^{k+1}$. 
The iterative fixed point
of $\tau_k$ is
$$
\tau_k^{\infty}(1) = \underbrace{1^{k-1} \ 2 \ \cdots \ 1^{k-1} \ 2}_{\substack{(k-1) \ 
\mbox{\rm\small  occurrences} \\ \mbox{\rm\small of the word $1^{k-1} 2$}}}
\ 1^{k-1} \ 2 \ 1^{k+1} \cdots
$$

Letting $\sigma_1^{\infty}(1)$ denote the iterative fixed point of the morphism $\sigma_1$ defined by
$\sigma_1(1) = 121$, $\sigma_1(2) = 12221$, it is not very difficult to prove that
$\sigma_1^{\infty}(1) = 1 \tau_1^{\infty}(1)$. It was proved in \cite{Shallit} and written down in \cite{AAS} 
that the sequence $\sigma_1^{\infty}(1)$ is not $2$-automatic. Using methods similar to those in \cite{AAS} for other 
sequences, it can be proved, using a deep result of F. Durand \cite{Durand}, that, for $\kappa \geq 2$, the 
sequence $\sigma_1^{\infty}(1)$ is not $\kappa$-automatic either: this was actually done explicitly in 
\cite{Rampersad-Stipulanti}. Hence $\tau_1^{\infty}(1)$ is not $\kappa$-automatic either, for any $\kappa \geq 2$.

\bigskip

Here we will prove, inspired by the method in \cite{Shallit, AAS}, that the iterative fixed point of $\tau_k$ is 
not $\kappa$-automatic for any $\kappa \geq 2$. First we show, thanks to Durand's theorem \cite{Durand}, that it suffices 
to prove that $\tau_k^{\infty}(1)$ is not $(k+1)$-automatic. 

\begin{lemma}\label{simplification}
If the sequence $\tau_k^{\infty}(1)$ were $\kappa$-automatic for some $\kappa \geq 2$, then it would be $(k+1)$-automatic.
\end{lemma}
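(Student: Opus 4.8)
The plan is to invoke the theorem of F.\ Durand \cite{Durand} that controls which bases can make a given fixed point automatic. The key point is that $\tau_k^{\infty}(1)$ is the fixed point of a \emph{primitive} morphism $\tau_k$ (every letter eventually generates every other letter, since $\tau_k(2)$ contains both $1$ and $2$, and $\tau_k(1)=1^{k-1}2$ contains $2$ which then generates $1$'s). Durand's theorem asserts that if the fixed point of a primitive morphism is $\kappa$-automatic, then $\kappa$ must be \emph{multiplicatively dependent} on the dominant eigenvalue, or more precisely on the relevant growth/incidence data of the morphism. So the whole strategy is: compute the incidence matrix of $\tau_k$, extract its Perron eigenvalue (or the associated ``admissible'' set of bases), and show that the only integer $\kappa \ge 2$ compatible with it is $\kappa = k+1$.

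First I would write down the incidence matrix of $\tau_k$. Reading off the letter counts, $\tau_k(1)=1^{k-1}2$ has $k-1$ ones and one $2$, while $\tau_k(2)=1^{k-1}21^{k+1}=1^{2k}21^{0}$ has $2k$ ones and one $2$. Hence the incidence matrix (with the convention that column $j$ records the image of letter $j$) is
\begin{equation*}
M = \begin{pmatrix} k-1 & 2k \\ 1 & 1 \end{pmatrix}.
\end{equation*}
Its characteristic polynomial is $x^2 - kx - (k+1) = (x-(k+1))(x+1)$, so the eigenvalues are $k+1$ and $-1$, and the dominant (Perron) eigenvalue is exactly $\lambda = k+1$. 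This is the crucial numerical coincidence: the growth rate of the primitive morphism $\tau_k$ equals $k+1$.

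Next I would feed this into Durand's rigidity result. Durand's theorem (in the form used in \cite{AAS,Rampersad-Stipulanti} for the analogous $k=1$ sequence) says that a fixed point of a primitive morphism with dominant eigenvalue $\lambda$ can be $\kappa$-automatic only if $\kappa$ and $\lambda$ are multiplicatively dependent, i.e.\ $\kappa^a = \lambda^b$ for some positive integers $a,b$; equivalently $\kappa$ is an integer power of a fixed real and $\log\kappa/\log\lambda \in \mathbb{Q}$. Since here $\lambda = k+1$ is itself an integer, the integers $\kappa\ge 2$ multiplicatively dependent with $k+1$ are exactly the powers $\kappa=(k+1)^t$, $t\ge 1$. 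But a standard base-change fact (a sequence is $\kappa$-automatic iff it is $\kappa^t$-automatic, Cobham's observation) then collapses all these powers to the base $k+1$: being $(k+1)^t$-automatic is equivalent to being $(k+1)$-automatic. Combining, if $\tau_k^{\infty}(1)$ is $\kappa$-automatic for some $\kappa\ge2$, then $\kappa$ must be a power of $k+1$, and hence $\tau_k^{\infty}(1)$ is $(k+1)$-automatic, which is precisely the claim of the lemma.

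The main obstacle I anticipate is stating Durand's theorem with exactly the right hypotheses and invoking it correctly. One must be careful that $\tau_k^{\infty}(1)$ genuinely is a fixed point of a primitive morphism (primitivity must be verified, which is straightforward here but essential), and that the version of Durand's rigidity theorem being cited applies to \emph{non-uniform} morphisms — $\tau_k$ is non-uniform, with $|\tau_k(1)|=k$ and $|\tau_k(2)|=2k+1$, so a naive ``the base equals the uniform block length'' argument is unavailable and the spectral/eigenvalue formulation is genuinely needed. A secondary subtlety is the reduction from ``$\kappa$ multiplicatively dependent with $k+1$'' down to ``$(k+1)$-automatic'': one should make explicit the elementary fact that $\kappa$-automaticity is preserved under passing between $\kappa$ and any of its integer powers, so that the set of admissible bases reduces to the single canonical base $k+1$. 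Once these citations and the power-collapsing step are in place, the lemma follows immediately from the eigenvalue computation, with no further calculation required.
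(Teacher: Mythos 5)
Your overall strategy is exactly the paper's: compute the incidence matrix $\begin{pmatrix} k-1 & 2k \\ 1 & 1 \end{pmatrix}$ of $\tau_k$, observe that its dominant eigenvalue is $k+1$ (your characteristic-polynomial computation $x^2-kx-(k+1)=(x-(k+1))(x+1)$ is correct, and your explicit check of primitivity is a point the paper leaves implicit), then invoke Durand's theorem \cite{Durand} and collapse powers. However, two steps as you have written them are wrong. First, your statement of Durand's theorem drops the ultimately-periodic exception. The theorem is a dichotomy: if a sequence that is $\alpha$-substitutive (for the dominant eigenvalue $\alpha$ of a primitive morphism) is also $\kappa$-automatic, then \emph{either} it is ultimately periodic \emph{or} $\alpha$ and $\kappa$ are multiplicatively dependent. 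Primitivity does not excuse omitting that branch: a fixed point of a primitive morphism can be periodic (e.g., $0\to 010$, $1\to 101$), and nothing in your argument establishes aperiodicity of $\tau_k^{\infty}(1)$. The paper handles this branch in one clause — an ultimately periodic sequence is automatic in every base, hence $(k+1)$-automatic — and your proof needs the same clause to be complete.

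Second, and more seriously, your claim that ``the integers $\kappa\geq 2$ multiplicatively dependent with $k+1$ are exactly the powers $\kappa=(k+1)^t$'' is false. Multiplicative dependence means $\kappa^a=(k+1)^b$ for some positive integers $a,b$; for $k+1=4$, both $\kappa=2$ (since $2^2=4$) and $\kappa=8$ (since $8^2=4^3$) are multiplicatively dependent with $4$, yet neither is a power of $4$. So your collapsing step, which assumes $\kappa$ literally is a power of $k+1$, does not apply. The repair is the route the paper takes, and it uses only the ingredients you already name: from $\kappa^a=(k+1)^b$ and the fact that a sequence is $\kappa$-automatic if and only if it is $\kappa^a$-automatic, the sequence is $\kappa^a$-automatic, i.e., $(k+1)^b$-automatic, hence $(k+1)$-automatic. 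In short, apply the power-collapsing fact on \emph{both} sides of the dependence relation $\kappa^a=(k+1)^b$, rather than trying to force $\kappa$ itself into the form $(k+1)^t$.
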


\begin{proof}
The transition matrix of the morphism $\tau_k$ is the matrix
$M_k = \begin{pmatrix} 
	  k-1 & 2k \\
	   1  & 1 
       \end{pmatrix}
$
whose dominant eigenvalue is $(k+1)$. Hence the sequence $\tau_k^{\infty}(1)$ is $(k+1)$-substitutive.
Thus, if it were $\kappa$-automatic for some $\kappa \geq 2$, then it would either be ultimately periodic (hence
in particular $(k+1)$-automatic), or the integers $(k+1)$ and $\kappa$ would be multiplicatively dependent 
(see \cite[Theorem~1]{Durand}). If $(k+1)$ and $\kappa$ are multiplicatively dependent, then there exist two nonzero 
integers $a$ and $b$ such that $(k+1)^a = \kappa^b$. Thus the sequence $\tau_k^{\infty}(1)$ is $(k+1)^a$-automatic, 
hence $(k+1)$-automatic. 
\end{proof}

\bigskip

To complete the proof of the non-automaticity of $\tau_k^{\infty}(1)$, we are thus going to prove that 
$\tau_k^{\infty}(1)$ is not $(k+1)$-automatic. We begin with some lemmas.

\begin{lemma}\label{elementary}
Define the sequence of integers $(W_k(n))_{n \geq 0}$ by \[W_k(n) := \dfrac{(k+1)^n - (-1)^n}{k+2}\cdot\] 
Then we have the following properties.

\begin{itemize}

	\item[(i)] $W_k(n+1) = (k+1)W_k(n) + (-1)^n$.

	\item[(ii)] $W_k(n+2) = k W_k(n+1) + (k+1) W_k(n)$.

	\item[(iii)] $W_k(n+1) + W_k(n) = (k+1)^n$.

	\item[(iv)] $k \left(\displaystyle\sum_{1 \leq \ell \leq n} W_k(\ell) \right) = 
		     \begin{cases} 
			     W_k(n+1), \ &\mbox{\rm if $n$ is odd;} \\
			     W_k(n+1) - 1, &\mbox{\rm if $n$ is even.}
		     \end{cases}$

	\item[(v)] $W_k(n) = \displaystyle\sum_{1 \leq j \leq n} (-1)^{j+1} (k+1)^{n-j}$.

	\item[(vi)] $W_k(n) = 
		     \begin{cases} 
			 (k+1)^{n-1} - k \left(\displaystyle\sum_{1 \leq j \leq \frac{n-1}{2}} (k+1)^{n-2j-1}\right),
				 \ &\mbox{\rm if $n$ is odd;} \\
			 (k+1)^{n-1} - k \left(\displaystyle\sum_{1 \leq j \leq \frac{n-1}{2}} (k+1)^{n-2j-1} \right) - 1,
				  \ &\mbox{\rm if $n\not=0$ is even;}
		     \end{cases}$ 

	\item[(vii)] The length of $\tau_k^n(1)$ is equal to $W_k(n+1)$.

\end{itemize}
\end{lemma}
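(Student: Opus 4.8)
The plan is to organize the seven identities as consequences of the closed form $W_k(n) = ((k+1)^n-(-1)^n)/(k+2)$ and of one another, rather than proving each from scratch. Items (i) and (iii) I would dispatch by direct substitution: inserting the closed form into both sides and clearing the denominator $k+2$ reduces (i) to $(k+1)^{n+1}-(k+1)(-1)^n+(k+2)(-1)^n=(k+1)^{n+1}+(-1)^n$ and (iii) to $(k+1)^{n+1}+(k+1)^n=(k+2)(k+1)^n$ once the $(-1)$-terms cancel, both of which are immediate. For (ii) I would avoid a third substitution and instead derive it from (i): applying (i) at index $n+1$ gives $W_k(n+2)=(k+1)W_k(n+1)+(-1)^{n+1}$, and (i) at index $n$ lets me rewrite $(-1)^{n+1}=(k+1)W_k(n)-W_k(n+1)$; substituting and collecting the $W_k(n+1)$-terms yields $kW_k(n+1)+(k+1)W_k(n)$ exactly. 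Equivalently, $x^2-kx-(k+1)=(x-(k+1))(x+1)$ exhibits $k+1$ and $-1$ as the two bases in the closed form, but the argument from (i) is shorter.

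Item (iv) I would obtain by telescoping. Writing (i) as $W_k(\ell+1)-W_k(\ell)=kW_k(\ell)+(-1)^\ell$ and summing over $1\le\ell\le n$, the left side collapses to $W_k(n+1)-W_k(1)=W_k(n+1)-1$, while the right side is $k\sum_\ell W_k(\ell)+\sum_\ell(-1)^\ell$. Since $\sum_{1\le\ell\le n}(-1)^\ell$ is $0$ for even $n$ and $-1$ for odd $n$, solving for $k\sum_\ell W_k(\ell)$ returns precisely the two stated cases.

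The last three items describe the fine base-$(k+1)$ structure of $W_k$ that will later drive the non-automaticity argument. Item (v) is a finite geometric sum of ratio $-1/(k+1)$; I would prove it by induction on $n$ via (i), checking that the new summand $(-1)^n$ added to $(k+1)W_k(n)$ is exactly the $j=n+1$ term, because $(-1)^n=(-1)^{(n+1)+1}(k+1)^0$. Item (vi) is then a regrouping of (v): applying $k(k+1)^i=(k+1)^{i+1}-(k+1)^i$ to each term of the claimed $k$-weighted sum and telescoping reproduces the alternating sum of (v), the only subtlety being the parity bookkeeping on the number of terms, which is what produces the extra $-1$ when $n$ is even.

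For (vii) I would pass to the Parikh (letter-count) vector. Since $\tau_k(1)=1^{k-1}2$ contains $k-1$ ones and one $2$ while $\tau_k(2)=1^{k-1}21^{k+1}$ contains $2k$ ones and one $2$, the vector $(a_n,b_n)^{T}$ of one- and two-counts of $\tau_k^n(1)$ evolves by the matrix $M_k$ from the proof of Lemma \ref{simplification}, so by Cayley--Hamilton the length $L_n:=a_n+b_n=|\tau_k^n(1)|$ satisfies the recurrence with characteristic polynomial $x^2-kx-(k+1)$, namely $L_{n+1}=kL_n+(k+1)L_{n-1}$. This is exactly the recurrence (ii), and since $W_k(1)=1=L_0$ and $W_k(2)=k=L_1$ match the initial data, uniqueness of solutions of a second-order linear recurrence forces $L_n=W_k(n+1)$. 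All the computations are elementary; the one place that genuinely needs care is the index accounting in (vi), where pinning down the summation ranges and the even/odd correction term is the main obstacle to a clean write-up.
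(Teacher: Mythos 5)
Your proof is correct, and in outline it follows the paper's: items (i)--(vi) are elementary manipulations of the closed form, with (v) proved by induction via (i) exactly as the paper does, and (vii) reduced to showing that the lengths satisfy the same recurrence and initial conditions as $W_k(\cdot+1)$. The differences are in the routing. The paper treats (i)--(iv) as direct consequences of the definition and proves (vi) by evaluating the geometric sum $\sum_{1\le j\le (n-1)/2}(k+1)^{n-2j-1}$ directly, whereas you derive (ii) from (i), get (iv) by telescoping (i), and obtain (vi) by regrouping (v) via $k(k+1)^i=(k+1)^{i+1}-(k+1)^i$; these are equivalent computations and all of them check out (including the parity bookkeeping that produces the extra $-1$ for even $n$). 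The one genuine divergence is (vii): the paper tracks the pair $\ell_k(n)=|\tau_k^n(1)|$, $m_k(n)=|\tau_k^n(2)|$, which satisfies the coupled first-order system $\ell_k(n+1)=(k-1)\ell_k(n)+m_k(n)$, $m_k(n+1)=2k\ell_k(n)+m_k(n)$, and then verifies that the explicit ansatz $\ell_k'(n)=W_k(n+1)$, $m_k'(n)=W_k(n+2)-(k-1)W_k(n+1)$ satisfies the same system with the same initial values; you instead apply Cayley--Hamilton to the Parikh matrix $M_k$ to collapse the system to the single scalar recurrence $L_{n+2}=kL_{n+1}+(k+1)L_n$, which is precisely (ii), and then match the two initial values $L_0=1=W_k(1)$ and $L_1=k=W_k(2)$. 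Your route buys a cleaner reduction --- no need to guess the companion formula for $m_k$ --- at the cost of invoking Cayley--Hamilton; the paper's verification is more hands-on but requires writing down $m_k'$ in advance. Both arguments are complete.
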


\begin{proof}
Assertions (i), (ii), (iii) and (iv) are straightforward consequences of the definition 
of $W_k(n)$. Assertion~(v) is proved by induction on $n$ using (i).
Assertion (vi) is proved by calculating the sum
$\displaystyle\sum_{1 \leq j \leq \frac{n-1}{2}} (k+1)^{n-2j-1}$.

\medskip

Finally, to prove (vii), we let $\ell_k(n)$ and $m_k(n)$ denote the lengths of the words $\tau_k^n(1)$ and 
$\tau_k^n(2)$. We clearly have from the definition of $\tau_k$ that $\ell_k(0) = m_k(0) = 1$, and, for $n \geq 0$,
\begin{align*}
		\ell_k(n+1) & = (k-1) \ell_k(n) + m_k(n)\\
		m_k(n+1) & = 2k \ell_k(n) + m_k(n).
\end{align*}
Define $\ell_k'$ and $m_k'$ by $\ell_k'(n) := W_k(n+1)$ and $m_k'(n) := W_k(n+2) - (k-1) W_k(n+1)$.
Since $\ell_k'$ and $m_k'$ have the same initial values and satisfy the same recurrence (use (ii)) 
as $\ell_k$ and $m_k$, we have $\ell_k = \ell_k'$ and $m_k = m_k'$.
\end{proof}

\begin{remark}
The sequence $(W_1(n))_{n \geq 0} = 0 \ 1 \ 1 \ 3 \ 5 \ 11 \ 21 \ 43 \ \cdots$ is the {\it Jacobsthal sequence}
(sequence \seqnum{A001045} in the {\it On-Line Encyclopedia of Integer
Sequences} (OEIS) \cite{OEIS}). The sequence $(W_2(n))_{n \geq 0} = 0 \ 1 \ 2 \ 7 \ 20 \ 61 \ \cdots$ is 
sequence \seqnum{A015518} in the OEIS. The sequences $(W_3(n))_{n \geq 0}$, $(W_4(n))_{n \geq 0}$, $\ldots$, up 
to $(W_9(n))_{n \geq 0}$ are, respectively, the sequences \seqnum{A015521},
\seqnum{A015531}, \seqnum{A015540}, \seqnum{A015552}, \seqnum{A015565},
\seqnum{A015577}, \seqnum{A015585}  in 
the OEIS. The number $W_k(n)$ counts in particular the number of walks of length $n$ between two distinct vertices 
of the complete graph $K_n$. Also see Proposition~\ref{complement} below.
\end{remark}

Now we introduce a numeration system associated with $\tau_k$ (where, as previously, $k \geq 0$).
Two propositions about this numeration sytem and its relation to the sequence $\tau_k^{\infty}(1)$
will prove useful for obtaining that $\tau_k^{\infty}(1)$ is not $(k+1)$-automatic.

\begin{definition}
Let $r$ be a positive integer. Let $x_1$, $x_2$, $\ldots$,  $x_r$ be nonnegative integers.
We let $[x_r x_{r-1} \cdots x_1]_W$ denote the integer $\sum_{1 \leq j \leq r} x_j W_k(j)$.
We say that $[x_r x_{r-1} \cdots x_1]_W$ is a \emph{valid $W$-expansion} of the integer
$\sum_{1 \leq j \leq r} x_j W_k(j)$ if all the $x_j$'s belong to $[0, k]$, with $x_r \neq 0$,
and if the word $x_r x_{r-1} \cdots x_1$ ends with an even number (possibly equal to $0$) of $k$'s.
\end{definition}

\begin{proposition}
Every nonzero integer admits a unique valid $W$-expansion.
\end{proposition}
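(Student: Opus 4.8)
The plan is to partition $\mathbb{Z}_{\geq 1}$ into the blocks $I_r := \{W_k(r), W_k(r)+1, \ldots, W_k(r+1)-1\}$ for $r \geq 1$, and to show that the valid $W$-expansions of length exactly $r$ (those with $x_r \neq 0$) are in bijection with $I_r$; existence and uniqueness then follow at once. First I would record the elementary facts that $W_k(j) \geq 1$ for $j \geq 1$ and that $(W_k(r))_{r \geq 1}$ is nondecreasing (strictly increasing once $k \geq 2$, with the single coincidence $W_1(1) = W_1(2) = 1$ when $k=1$). Since property (iii) of Lemma~\ref{elementary} gives $W_k(r+1) + W_k(r) = (k+1)^r \to \infty$ and since $\max I_r + 1 = W_k(r+1) = \min I_{r+1}$, the (possibly empty) blocks $I_r$ are pairwise disjoint and cover $\mathbb{Z}_{\geq 1} = \bigsqcup_{r \geq 1} I_r$. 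Thus every $N \geq 1$ lies in exactly one block $I_{r}$, and it remains to match valid length-$r$ expansions with $I_r$.

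Next I would pin down the range of a valid expansion. If $[x_r \cdots x_1]_W$ is valid of length $r$, then $x_r \geq 1$ forces the value to be at least $W_k(r)$. For the upper bound, the largest value attainable by any length-$r$ word is obtained by taking all digits equal to $k$, and by property (iv) this maximum equals $W_k(r+1)$ when $r$ is odd and $W_k(r+1)-1$ when $r$ is even. As all weights $W_k(j)$ are strictly positive, the all-$k$ word is the unique maximizer; and when $r$ is odd this word ends in an odd number ($r$) of $k$'s and is therefore not valid. Hence in both parities a valid length-$r$ expansion has value at most $W_k(r+1)-1$, so its value lies in $I_r$.

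The heart of the argument is injectivity. Suppose two distinct valid length-$r$ expansions $[x_r\cdots x_1]_W$ and $[y_r \cdots y_1]_W$ have the same value, and let $i$ be the highest position at which they differ, say $x_i > y_i$. Cancelling the common high part gives $(x_i - y_i) W_k(i) = \sum_{j<i} (y_j - x_j) W_k(j) \leq k \sum_{j<i} W_k(j)$, and property (iv) evaluates the right-hand side as $W_k(i) - 1$ when $i$ is odd and as $W_k(i)$ when $i$ is even. The odd case is immediately impossible, since the left-hand side is at least $W_k(i)$. In the even case equality is forced throughout, so $x_i = y_i + 1$ and $y_j = k$, $x_j = 0$ for every $j < i$; but then $y$ ends in the block $y_1 = \cdots = y_{i-1} = k$ of odd length $i-1$, and since $y_i = x_i - 1 \leq k-1$ the maximal trailing run of $k$'s in $y$ has the odd length $i-1$, contradicting the validity of $y$. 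Therefore the value map is injective on valid length-$r$ expansions, and combined with the block structure $I_r$ this already yields uniqueness across all lengths.

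Finally I would close the loop by counting. Writing $P_n$ for the number of length-$n$ words over $\{0,\ldots,k\}$ ending in an even number of $k$'s, a one-step recurrence (append a low digit, distinguishing $d<k$ from $d=k$) gives $P_n + P_{n-1} = (k+1)^n$ with $P_0 = 1$; comparing with property (iii) yields $P_n = W_k(n+1)$. Since a valid length-$r$ expansion is exactly an even-trailing word with nonzero leading digit, the number of them is $P_r - P_{r-1} = W_k(r+1) - W_k(r) = \# I_r$. An injective map between finite sets of the same cardinality is a bijection, so the valid length-$r$ expansions represent precisely the integers of $I_r$, and existence for every $N \geq 1$ follows. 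The main obstacle is the injectivity step, where the oscillating term $(-1)^n$ in $W_k$ (encoded in the two parities of property (iv)) must be matched exactly against the ``even number of trailing $k$'s'' condition; the counting identity $P_n = W_k(n+1)$ is the clean device that makes surjectivity automatic and avoids a separate greedy construction.
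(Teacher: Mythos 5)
Your proof is correct, but it runs the paper's argument in the opposite direction, and the two routes are genuinely different. The paper proves \emph{existence} directly, by induction: for $n \in [W_k(t), W_k(t+1))$ it extracts a leading digit $\alpha$ with $\alpha W_k(t) < n \leq (\alpha+1)W_k(t)$, recurses on $n - \alpha W_k(t)$, and must then dispose of one delicate exceptional case (the all-$k$ word of odd length, ruled out because its value equals $W_k(t+1)$ by Lemma~\ref{elementary}~(iv)); \emph{uniqueness} is then deduced from the same counting fact you use, namely that the number of length-$r$ words with nonzero leading digit ending in an even number of $k$'s equals $W_k(r+1)-W_k(r)$, so surjectivity onto $[W_k(r), W_k(r+1))$ forces injectivity. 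You instead prove \emph{uniqueness} directly --- your highest-differing-digit argument, in which Lemma~\ref{elementary}~(iv) splits by parity and the even case is killed exactly by the ``even number of trailing $k$'s'' condition, has no counterpart in the paper --- and you recover \emph{existence} from the counting fact, since an injection between finite sets of equal cardinality is a bijection. Each direction requires its own auxiliary step: you need the explicit range bound (valid length-$r$ values lie in $[W_k(r), W_k(r+1))$, using the parity of the all-$k$ maximizer), which the paper can skip because in its direction the range fact falls out of the cardinality argument; conversely you skip the paper's greedy construction and its exceptional case entirely. Your derivation of the count is also cleaner: the recurrence $P_n + P_{n-1} = (k+1)^n$ (strip the final $k$ from a word ending in an odd run) together with Lemma~\ref{elementary}~(iii) replaces the paper's explicit geometric-series subtraction and its appeal to assertion~(vi). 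What the paper's route buys is constructiveness --- its induction is an algorithm computing the expansion; what yours buys is a self-contained uniqueness proof and the avoidance of the exceptional-case analysis. One small point you glossed over: identifying even-trailing length-$r$ words with leading digit $0$ with even-trailing words of length $r-1$ (needed for $P_r - P_{r-1}$) requires the one-line remark that prepending a $0$ does not change the maximal trailing run of $k$'s.
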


\begin{proof}
First we show that every nonzero integer admits a valid $W$-expansion,
by proving by induction on $t$ that, 
for all $n \in [1, W_k(t))$,
$n$ admits a valid $W$-expansion $n = [x_r x_{r-1} \cdots x_1]_W$ with $r < t$,
and $[x_r x_{r-1} \cdots x_1]_W$ ends with an even number (possibly equal to zero) of $k$'s. 
This is true for $t = 2$,
since $W_k(2) = k$ and we have $n = [n]_W$ for all $n \in [1, k)$.
Suppose that the property holds for some $t$,
and let $n$ be an integer belonging to $[W_k(t), W_k(t+1))$. 
Since $W_k(t) = [10^{t - 1}]_W$ 
we can suppose that $n$ belongs to $(W_k(t), W_k(t+1))$. Using 
Assertion~(i) of Lemma~\ref{elementary},
we have $W_k(t) < n < W_k(t+1) \leq (k+1) W_k(t) + 1$.  Hence 
$W_k(t) < n \leq (k+1) W_k(t)$. Thus, if $\alpha$ is the integer such that 
$\alpha W_k(t) < n \leq (\alpha + 1) W_k(t)$,
we have $\alpha < k+1$ and $\alpha + 1 > 1$. 
Hence $1 \leq \alpha \leq k$.
Define $m := n - \alpha W_k(t)$. Then $m$ belongs to $(0, W_k(t)]$.
By the induction hypothesis for $m \neq W_k(t)$ and directly for $m = W_k(t)$, $m$ can be represented as
$[x_r x_{r-1}\cdots x_1]_W$, with $r < t$ and $[x_r x_{r-1} \cdots x_1]_W$ ends with an even number (possibly equal 
to zero) of $k$'s. Then 
$$n = m + \alpha W_k(t) 
= [\alpha \!\! \underbrace{0 \ \cdots \ 0}_{(t-1-r) \ \mbox{\rm\small terms}} \!\! x_r \ x_{r-1} \ \cdots \ x_1]_W.$$
This yields a valid $W$-expansion of $n$, except possibly if $r = t - 1$, $\alpha = k$, all the $x_j$'s are equal 
to $k$, and $t$ is odd. But then $n = k(W_k(t) + W_k(t - 1) + \cdots + W_k(1))$, which is equal to $W_k(t+1)$ 
(Assertion~(iv) of Lemma~\ref{elementary}): but we assumed $n < W_k(t+1)$.

\bigskip

To prove uniqueness of the valid $W$-expansion of every integer, it suffices to prove that the number of words 
$x_r x_{r-1} \cdots x_1$ with $x_i \in [0, k]$, $x_r \neq 0$ and
such that $x_r x_{r-1} \cdots x_1$ ends with an 
even number (possibly $0$) of $k$'s
is equal to the number of integers in the interval $[W_k(r), W_k(r+1))$, 
i.e., $W_k(r+1) - W_k(r)$.
To count this number of words,
we note that it is the difference between the number
of all words of length $r$ over the alphabet $\{0, 1, \ldots, k\}$ beginning with $\alpha \in [1,k]$ 
(i.e., $k (k+1)^{r-1}$) and the number of all such words ending with $\beta k$,
or $\beta k k k$, or $\beta k k k k k \cdots $, where $\beta$ is any letter different from $k$,
except possibly if $r$ is odd and all the letters in $x_r x_{r-1} \cdots x_1$ are equal to $k$.
We thus obtain $k(k+1)^{r-1} - (k^2(k+1)^{r-3} + k^2(k+1)^{r-5} + \cdots) - \eta_r$, 
where $\eta_r$ is equal
to $0$ if $r$ is even and to $1$ if $r$ is odd, which (see Assertion~(vi) of Lemma~\ref{elementary}) 
is equal to $W_k(r) + 1$ if $r$ is even, and to $W_k(r) - 1$ if $r$ is odd, thus to $W_k(r) + (-1)^r$.
And this last quantity is equal to $W_k(r+1) - W_k(r)$ from Assertion~(i) in Lemma~\ref{elementary}.
\end{proof}

\begin{proposition}\label{value-1-2}
Let $\tau_k^{\infty}(1) := (t_k(n))_{n \geq 0} = t_k(0) t_k(1) \cdots \in \{1, 2\}^{\mathbb N}$. Then $t_k(n) = 2$ if
and only if the valid $W$-expansion of $(n+1)$ ends with an odd number of $0$'s (or equivalently if and only if the 
valid $W$-expansion of $(n+1)$ has the form $n = \sum_{2\ell+2 \leq j \leq r} x_j W_k(j)$, for some $\ell \geq 0$,
and $x_{2\ell+2} \neq 0$).
\end{proposition}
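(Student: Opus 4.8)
The plan is to fix the $0$-indexed reading $\tau_k^{\infty}(1)=(t_k(m))_{m\ge 0}$ and to prove, by induction on $R$, the statement $\mathcal{P}(R)$: for every $N$ with $1\le N\le W_k(R+1)$, the letter $t_k(N-1)$ equals $2$ if and only if the valid $W$-expansion of $N$ ends in an odd number of zeros. Since $W_k(R+1)=|\tau_k^R(1)|$ by Lemma~\ref{elementary}(vii) and $W_k(R+1)\to\infty$, this covers all positive integers $N=m+1$. The base case $R=1$ is immediate: $\tau_k^1(1)=1^{k-1}2$, and each $N\in[1,k)=[1,W_k(2))$ has the one-digit valid expansion $[N]_W$, with no trailing zero, matching $t_k(N-1)=1$.

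The inductive step (from $\mathcal{P}(R)$ to $\mathcal{P}(R+1)$) rests on the self-similar decomposition
\[
\tau_k^{R+1}(1)=[\tau_k^{R}(1)]^{k}\,[\tau_k^{R-1}(1)]^{k+1},
\]
already obtained in the verification of Eq.~\eqref{eq:tau-n}; by Lemma~\ref{elementary}(ii) its length is $W_k(R+2)$, and the prefix $\tau_k^{R+1}(1)$ is partitioned into $k$ consecutive copies of $\tau_k^{R}(1)$ followed by $k+1$ consecutive copies of $\tau_k^{R-1}(1)$. Positions $m<W_k(R+1)$ are already handled by $\mathcal{P}(R)$, so I may assume $m\in[W_k(R+1),W_k(R+2))$. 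Each block above is a prefix of $\tau_k^{\infty}(1)$, so if $m$ lies at local offset $m'$ inside a copy of $\tau_k^{R}(1)$, or at local offset $m''$ inside a copy of $\tau_k^{R-1}(1)$, then $t_k(m)=t_k(m')$ (resp.\ $t_k(m)=t_k(m'')$), and the strictly smaller local index, satisfying $m'+1\le W_k(R+1)$ (resp.\ $m''+1\le W_k(R)$), falls under the hypothesis $\mathcal{P}(R)$.

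It then remains to compare the valid $W$-expansion of $N=m+1$ with that of the local index. In the first family one has $m+1=i\,W_k(R+1)+(m'+1)$ with $1\le i\le k-1$; in the second, $m+1=k\,W_k(R+1)+i'\,W_k(R)+(m''+1)$ with $0\le i'\le k$. Because $W_k(t)=[1\,0^{t-1}]_W$, I would obtain the expansion of $N$ by prepending to the (valid) expansion of the local index the digit $i$ at place $R+1$ (resp.\ the digits $k,i'$ at places $R+1,R$), padding the gap with zeros; all digits stay in $[0,k]$. As the least significant digits of $N$ coincide with those of the local index, the number of trailing zeros is unchanged, and by the uniqueness of valid $W$-expansions proved above this prepended word is indeed the valid expansion of $N$. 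Hence the trailing-zero parity of $N$ equals that of the local index, and the induction closes.

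The step I expect to be delicate is the treatment of block boundaries, where the local index $m'+1$ (resp.\ $m''+1$) equals an exact place-value $W_k(R+1)$ (resp.\ $W_k(R)$) and the naive prepending would trigger a carry. These are exactly the $N$ whose valid expansion has the pure form $[\,i\,0^{\,j}\,]_W$, and then $m=N-1$ is the last letter of a copy of $\tau_k^{j}(1)$. I would therefore first isolate the elementary lemma that the last letter of $\tau_k^{j}(1)$ is $2$ precisely when $j$ is odd — which follows from $\tau_k^{j}(1)=[\tau_k^{j-1}(1)]^{k-1}\tau_k^{j-1}(2)$ together with the fact that $\tau_k^{j-1}(2)$ ends in $\tau_k^{j-2}(1)$, giving a period-two alternation from the base values $\tau_k^0(1)=1$ and $\tau_k^1(1)=1^{k-1}2$ — and then check directly that these exceptional $N$, whose number of trailing zeros is exactly $j$, obey the stated criterion. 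This reconciles the boundary values with the generic analysis and keeps the induction intact.
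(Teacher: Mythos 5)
Your proposal is correct and takes essentially the same route as the paper's own proof: both induct along the decomposition $\tau_k^{R+1}(1)=[\tau_k^{R}(1)]^{k}[\tau_k^{R-1}(1)]^{k+1}$ (Eq.~\eqref{astar}), obtain the valid $W$-expansion of $N=m+1$ by prepending digits to that of the local index inside a block and invoking uniqueness of valid expansions, and treat block boundaries separately (the paper by first settling the positions $n=W_k(m)$ directly, you by a last-letter parity lemma), with your sketch even covering the last $k+1$ blocks that the paper omits as ``tedious.'' One small correction: in those last blocks the boundary values $N=kW_k(R+1)+(i'+1)W_k(R)$ with $i'+1\le k$ have valid expansion $[k\,(i'+1)\,0^{R-1}]_W$, which is not of the pure form $[i\,0^{j}]_W$ you assert, but your resolution applies verbatim since the trailing-zero count $R-1$ still matches the fact that $m=N-1$ is the last letter of a copy of $\tau_k^{R-1}(1)$.
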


\begin{proof} First we note that 
\begin{equation}
\tau_k^{m+2}(1) = (\tau_k^{m+1}(1))^k (\tau_k^m(1))^{k+1}, \ \mbox{\rm for all } m \geq 0.
\label{astar}
\end{equation}
For $m=0$ we have
$$\tau_k^2(1) = (1^{k-1}2)^{k-1} (1^{k-1}21^{k+1} )= (1^{k-1}2)^k 1^{k+1} =
(\tau_k^1(1))^k (\tau_k^0(1))^{k+1}.$$ 
It then suffices to apply $\tau_k^m$ to this equality. 
In other words, Eq.~\eqref{astar} means that $\tau_k^{m+1}(1)$ is the concatenation of $k$ blocks equal to $\tau_k^{m+1}(1)$ 
and of $(k+1)$ blocks equal to $\tau_k^m(1)$ (of respective lengths $W_k(m+2)$ and $W_k(m+1)$ from Assertion~(vii) 
of Lemma~\ref{elementary}):
$$
\tau_k^{m+2}(1) = \underbrace{\tau_k^{m+1}(1) \ \cdots \ 
                      \tau_k^{m+1}(1)}_{\substack{\mbox{\rm\small $k$ blocks of} \\ \mbox{\rm\small length $W_k(m+2)$}}} 
		  \
		  \underbrace{\tau_k^m(1) \ \cdots \ 
		      \tau_k^m(1)}_{\substack{\mbox{\rm\small $(k+1)$ blocks of} \\ \mbox{\rm\small length $W_k(m+1)$}}}.
$$

We first prove that if $n = W_k(m)$ for some integer $m$, then $t_k(n) = 2$ if and only if the valid 
$W$-expansion of $n+1$ ends with an odd number of $0$'s.
	
-- if $n= W_k(0) = 0$, then $t_k(0) = 1$ except for $k=1$ since $t_0(0)=2$. The valid $W$-expansion of $0+1=1$ is 
$[1]_W$ if $k \geq 2$ and $[10]_W$ if $k=1$ (since $[1]_W$ is not valid in this case);

-- if $n$ belongs to $\{W_k(1), W_k(2)\} = \{1, k\}$, we have that $n+1$ belongs to $\{2, k+1\}$. Hence either $k=1$, 
thus $n+1=2$ whose valid $W$-expansion is $[11]_W$; or $k=2$ and $n+1 \in \{2,3\}$, thus $2=[10]_W$ and $3=[11]_W$; 
or $k \geq 3$, thus $2 = [2]_W$ and $k+1=[11]_W$. So, for $n \in \{W_k(1), W_k(2)\} = \{1, k\}$, the valid 
$W$-expansion of $n+1$ ends with an odd number of $0$'s if and only if $n+1=k=2$. On the other hand $t_k(1)=2$ if 
and only if $k=2$, and $t_k(k) = 1$ for all $k \geq 1$.

-- if $n = W_k(m)$ for some $m \geq 3$, then $\tau_k^{m-1}(2)$ is followed by $1$, so that $t_k(n) = 1$, 
and $n+1 = W_k(m)+1$ ends with $1$, thus with an even number of $0$'s

\medskip

Now we prove by induction on $m \geq 1$ that, for all $n \in [0, W_k(m)]$, $t_k(n) = 2$ if and only if the valid 
$W$-expansion of $n+1$ ends with an odd number of $0$'s. Note that, from what precedes, it suffices to prove 
the claim for $n \in [0, W_k(m))$:

\medskip

-- For $m=1$, hence $W_k(1) = 1$, the set of relevant $n$ is empty.

\medskip
	
-- For $m=2$, hence $W_k(2) = k$, the set of relevant $n$ that do not already satisfy $n < W_k(1)$ is 
$\{1, 2, \ldots, k-1\}$; thus $n+1$ belongs to $\{2, \ldots, k\}$ and its valid $W$-expansion either belongs to 
$\{[10]_W\}$ if $k=2$, or it belongs to $\{[2]_W, \ldots, [k-1]_W, [10]_W\}$ if $k > 2$. In both cases there is only 
one such $W$-expansion ending with an odd number of $0$'s, namely $[10]_W = 2$, giving $n = k-1$. Since the prefix 
of length $W_k(1) = k$ of $\tau_k^{\infty}(1)$ is $1^{k-1}2$, we are done with the case $m=2$.

\medskip

-- Now suppose that our claim holds for $m+2$ (hence also for $m$ and $m+1$ since $W_k$ is increasing) 
for some $m \geq 0$. We want to prove it for $m+3$. Looking at the decomposition into blocks in Eq.~\eqref{astar}, we see that $n$ 
belongs to one of $(2k+1)$ blocks. If $n < W_k(m+3)$ belongs to one of the first $k$ blocks of length $W_k(m+2)$ that 
compose $\tau_k^{m+2}(1)$, then there exists $j \in [1, k-1]$ such that $n$ belongs to $[jW_k(m+2), (j+1)W_k(m+2)-1]$.
Thus $n = jW_k(m+2)  + \ell$, where $\ell$ belongs to $[0, W_k(m+2)-1]$. Furthermore $t_k(n) = 2$ if and only if 
$t_k(\ell) = 2$. We distinguish between the case where $\ell + 1 = W_k(m+2)$ and the case where $\ell + 1 < W_k(m+2)$.
If $\ell + 1 = W_k(m+2)$, the valid $W$-expansion of $\ell + 1$ is $[10^{m+1}]_W$ if $k > 1$ or $m > 0$, and $[11]_W$ 
if $k = 1$ and $m = 0$. The valid $W$-expansion of $n+1 = (j+1)W_k(m+2)$ then ends with the same number of $0$'s as
the valid $W$-expansion of $\ell + 1$. If $\ell + 1 < W_k(m+2)$, then by the induction hypothesis 
$\ell + 1 = [x_r x_{r-1} \cdots x_1]_W$ with $r < m+2$, and the valid $W$-expansion $[x_r x_{r-1} \cdots x_1]_W$ ends
with an odd number of $0$'s if and only if $t_k(\ell) = 2$. The valid $W$-expansion of $(n+1)$ is clearly equal to  
$[j \ \underbrace{0 \ \cdots \ 0}_{\mbox{\rm\small $m+1-r$ terms}} \ x_r \ x_{r-1} \ \cdots \ x_1]_W$:
it ends with an odd number of $0$'s if and only if this is also the case for $[x_r x_{r-1} \cdots x_1]_W$
(except possibly if $n=0$, but this case has already been dealt with).

\medskip

-- It remains to study what happens when $n$ belongs to the last $(k+1)$ blocks in Eq.~\eqref{astar}.
The proof is tedious and works in exactly the same way, so that we omit it.
\end{proof}

Now we prove one last lemma before our non-automaticity theorem.
\begin{lemma}\label{construction}
For $\ell, r, n \geq 1$
define the integers $b_k(l,n)$ and $c_k(\ell,r,r)$ by
\begin{align*}
b_k(\ell,n) & = [(1 0^{2\ell - 1})^n]_W  \\
   & = \sum_{1 \leq j \leq n} \frac{(k+1)^{2j\ell} - 1}{k+2} = \frac{(k+1)^{2\ell(n+1)} - (k+1)^{2\ell}}{(k+2)((k+1)^{2\ell}-1)} - \frac{n}{k+2}, \\
c_k(\ell,r,n) &= b_k(\ell,n) - b_k(\ell,r) - (k+1) W_k(2\ell). 
\end{align*}
Then the following properties hold.
\begin{itemize}
\item[(i)] For all $n \geq 1$ we have $b_k(\ell, n) = (k+1)^{2\ell} b_k(\ell, n-1) + n \frac{(k+1)^{2\ell} - 1}{k+2}$.

\item[(ii)] For all $\ell \geq 1$ we have $\frac{(k+1)^{2\ell} - 1}{k+2} \ | \ b_k(\ell, n)$.

\item[(iii)] For all $\ell,n \geq 1,$ we have $t_k(b_k(\ell, n) - 1) = 2$. 

\item[(iv)] For all $\ell, r \geq 1$, and $n$ sufficiently
large, we have $t_k(c(\ell,r,n) - 1) = 1$.

\item[(v)] For all $\ell \geq 1$, for all $c \geq 0$, and for all $i \in [0, (k+1)^{2c})$,
there exist infinitely many $r \geq 0$ such that $b(\ell, r) \equiv 
\modd{i} {(k+1)^{2c}}$.
\end{itemize}
\end{lemma}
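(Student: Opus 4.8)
The plan is to reduce everything to the single observation that $b_k(\ell,n)$ is the integer whose valid $W$-expansion is the word $(10^{2\ell-1})^n$; equivalently, writing $q=(k+1)^{2\ell}$ and $D=W_k(2\ell)=\frac{q-1}{k+2}$, one has $b_k(\ell,n)=\sum_{1\le j\le n}W_k(2\ell j)$, with the $1$'s sitting exactly at positions $2\ell,4\ell,\dots,2\ell n$. With this in hand, (i) is a one-line telescoping: forming $b_k(\ell,n)-q\,b_k(\ell,n-1)$ and reindexing the two geometric sums leaves $\sum_{1\le j\le n}\frac{q-1}{k+2}=nD$. Assertion (ii) then follows by induction from (i), since $b_k(\ell,1)=D$ and $b_k(\ell,n)=q\,b_k(\ell,n-1)+nD$ forces $D\mid b_k(\ell,n)$ whenever $D\mid b_k(\ell,n-1)$ (alternatively $b_k(\ell,n)/D=\sum_{1\le j\le n}\frac{q^j-1}{q-1}\in\mathbb Z$ directly). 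For (iii), I would note that $(10^{2\ell-1})^n$ satisfies every condition of a valid $W$-expansion (digits in $[0,k]$, leading digit $1$, and a trailing block of $2\ell-1$ zeros, hence an even—in fact zero—number of trailing $k$'s), so by uniqueness it is \emph{the} valid expansion of $b_k(\ell,n)$; as it ends in the odd number $2\ell-1$ of zeros, Proposition~\ref{value-1-2} applied to $n+1=b_k(\ell,n)$ yields $t_k(b_k(\ell,n)-1)=2$.

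The heart of the argument is (iv). First I would split off the high-order part: since $c_k(\ell,r,n)=\big(\sum_{r+1\le j\le n}W_k(2\ell j)\big)-(k+1)W_k(2\ell)$ and the terms with $j\ge r+2$ all sit at positions $\ge 2\ell(r+2)$, they do not interact with the low-order digits, so it suffices to compute the valid $W$-expansion of the tail
\[E:=W_k\big(2\ell(r+1)\big)-(k+1)W_k(2\ell)\]
and to check that it ends in an \emph{even} number of zeros. Writing $M=2\ell(r+1)$ (even) and $m=2\ell$, I would use Assertion~(iv) of Lemma~\ref{elementary} in the form $W_k(M)=k\sum_{1\le s\le M-1}W_k(s)$ (valid because $M$ is even), peel off $(k+1)W_k(m)=kW_k(m)+W_k(m)$, and then run a borrow cascade governed by the identity $kW_k(p-1)-W_k(p)=-W_k(p-1)+(-1)^p$ from Assertion~(i). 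This cascade moves the leftover debt $-W_k(m)$ down one position at a time, zeroing the coefficients at positions $m-1,m-2,\dots,1$, and terminates exactly at position $1$ because $-W_k(1)+1=0$. What remains is $E=k\sum_{m+1\le s\le M-1}W_k(s)$, i.e.\ the expansion with digit $k$ at positions $M-1,\dots,m+1$ and digit $0$ at positions $m,\dots,1$; this is valid and ends in $m=2\ell$ (even) zeros. Reattaching the high-order $1$'s then gives a valid expansion of $c_k(\ell,r,n)$ ending in $2\ell$ zeros, so Proposition~\ref{value-1-2} yields $t_k(c_k(\ell,r,n)-1)=1$ once $n$ is large enough for the tail to be positive. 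I expect the bookkeeping of this cascade—simultaneously tracking the descending position of the debt and the alternating constants $(-1)^p$—to be the one genuinely delicate point, and I would pin it down against a small case such as $\ell=1$, $r=2$ before writing the general step.

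Finally, (v) is a short congruence computation. Fix $Q=(k+1)^{2c}$. From $(k+2)W_k(2\ell j)=(k+1)^{2\ell j}-1$ we get $(k+2)W_k(2\ell j)\equiv -1\pmod Q$ as soon as $2\ell j\ge 2c$, so summing gives, for all sufficiently large $r$,
\[(k+2)\,b_k(\ell,r)\equiv \mathrm{const}-r \pmod{Q},\]
where the constant collects the finitely many terms with $2\ell j<2c$ and is independent of $r$. Since $\gcd(k+1,k+2)=1$, the factor $k+2$ is invertible modulo $Q$, so $r\mapsto b_k(\ell,r)\bmod Q$ is, for large $r$, an affine bijection of period $Q$ onto $\mathbb Z/Q\mathbb Z$. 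Hence every residue $i\in[0,(k+1)^{2c})$ is attained for infinitely many $r$, which is exactly (v).
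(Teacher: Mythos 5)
Your proposal is correct and follows essentially the same route as the paper: assertions (iii) and (iv) by exhibiting valid $W$-expansions and invoking Proposition~\ref{value-1-2}, with Lemma~\ref{elementary}(iv) doing the digit cancellation, and (v) by inverting $k+2$ modulo $(k+1)^{2c}$. Your isolation of the tail $E=W_k(2\ell(r+1))-(k+1)W_k(2\ell)$ in (iv) and your term-by-term congruence in (v) are only cosmetic reorganizations of the paper's computations; indeed your ``borrow cascade'' collapses to the single identity $k\sum_{1\le s\le 2\ell-1}W_k(s)=W_k(2\ell)$, which is exactly how the paper cancels the low-order block, arriving at the same final expansion $[(1\,0^{2\ell-1})^{n-r-1}\,0\,k^{2\ell r-1}\,0^{2\ell}]_W$.
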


\begin{proof}
Claims (i) and (ii) are clear. To prove Claim~(iii), note the valid $W$-expansion
$b(\ell,n) = [(1 \ 0^{2\ell - 1})^n]_W$ and use Proposition~\ref{value-1-2}. For Claim~(iv), we first 
write (note that some $W$-expansions below are not valid $W$-expansions)
\begin{align*}
	c_k(\ell,r,n) &= [(1 \ 0^{2\ell - 1})^n]_W - [(1 \ 0^{2\ell - 1})^r]_W - (k+1) [1 \ 0^{2\ell-1}]_W \\
	&= [(1 \ 0^{2\ell - 1})^{n-r} \ 0^{2\ell r}]_W - (k+1) [1 \ 0^{2\ell-1}]_W  \\
	&= [(1 \ 0^{2\ell - 1})^{n-r-1} \ 1 \ 0^{2\ell - 1 + 2 \ell r}]_W - (k+1) [1 \ 0^{2\ell-1}]_W \\
	&= [(1 \ 0^{2\ell - 1})^{n-r-1} \ 0 \ k^{2\ell - 1 + 2 \ell r}]_W - (k+1) [1 \ 0^{2\ell-1}]_W \ \
	\mbox{(using Lemma~\ref{elementary}~(iv))} \\
	&= [(1 \ 0^{2\ell - 1})^{n-r-1} \ 0 \ k^{2\ell - 1 + 2 \ell r}]_W - [k \ 0^{2\ell-1}]_W - [1 \ 0^{2\ell-1}]_W  \\
	&= [(1 \ \ 0^{2\ell - 1})^{n-r-1} \ 0 \ k^{2 \ell r - 1} \ 0 \ k^{2 \ell -1}]_W - [1 \ 0^{2\ell-1}]_W \\
	&= [(1 \ \ 0^{2\ell - 1})^{n-r-1} \ 0 \ k^{2 \ell r - 1} \ 0 \ k^{2 \ell -1}]_W - [0 \ k^{2\ell-1}]_W \ \
	\mbox{(using Lemma~\ref{elementary}~(iv))} \\
	&= [(1 \ \ 0^{2\ell - 1})^{n-r-1} \ 0 \ k^{2 \ell r - 1} \ 0^{2 \ell}]_W.
\end{align*}
Since this last $W$-expansion is valid, Proposition~\ref{value-1-2} yields that $t(c_k(\ell,r,n) - 1) = 1$.

\medskip

Finally to prove (v), we note that both $(k+2)$ and $(k+1)^{2 \ell + 1}$ are prime to $k+1$.
Thus $b(k,r) \equiv \modd{i} {(k+1)^{2c}}$ holds if and only if
$$
r((k+1)^{2 \ell} - 1) - (k+1)^{2 \ell (r+1)} + (k+1)^{2 \ell} \equiv 
	\modd{- i (k + 2) ((k+1)^{2\ell} - 1)} {(k+1)^{2c}}.
$$
This holds for $r$ sufficiently large and congruent to 
$\modd{-i (k+2) - (k+1)^{2 \ell}((k+1)^{2 \ell} - 1)^{-1}} {(k+1)^{2c}}$.
\end{proof}

Now we are ready for the non-automaticity theorem (Theorem \ref{thm:C}).

\begin{proof}[Proof of Theorem \ref{thm:C}]
As proved in Lemma~\ref{simplification}, it suffices to show that $\tau_k^{\infty}(1)$ is not $(k+1)$-automatic. 
Recall that this is equivalent to saying that its $(k+1)$-kernel is not finite, where the $(k+1)$-kernel of the 
sequence $\tau_k^{\infty}(1) = (t_k(n))_{n \geq 0}$ is the set of subsequences
$$
\left\{\big(t_k((k+1)^a n + b)\big)_{n \geq 0}\mid \ a \geq 0, b \in [0, (k+1)^a-1]\right\}.
$$
Since $t_k((k+1)^{2c} n - 1) = t_k((k+1)^{2c}(n-1) + (k+1)^{2c} - 1)$ for $n \geq 1$, it suffices to prove that,
for all integers $c, c'$ with $0 \leq c < c'$, the sequences $(t_k((k+1)^{2c} n - 1))_{n \geq 0}$ and 
$(t_k((k+1)^{2c'} n - 1))_{n \geq 0}$ are distinct. Let $\ell = c' - c$. From Lemma~\ref{construction}~(v)
applied to $i \equiv \modd{(k - (k+1)^{2 \ell + 1})(k + 2)^{-1}} {(k+1)^{2c}}$,
there exist infinitely many $r$ such 
that $b_k(\ell, r) \equiv \modd{(k - (k+1)^{2 \ell + 1})(k + 2)^{-1}} {(k+1)^{2c}}$. Let 
$m = k + 1 + \frac{b_k(\ell,r)(k+2)}{(k+1)^{2 \ell}-1}$. This is an integer by Lemma~\ref{construction}~(ii) and
$$
m - 1 = k + \frac{b_k(\ell,r)(k+2)}{(k+1)^{2 \ell} - 1} 
	\equiv \modd{- \frac{(k+1)^{2 \ell}}{(k+1)^{2 \ell} - 1}} {(k+1)^{2c}}.
$$
But (see Lemma~\ref{value-1-2}) the expression of 
$$b_k(\ell, m-1) = \frac{(k+1)^{2\ell m} - (k+1)^{2\ell}}{(k+2)((k+1)^{2\ell}-1)} - \frac{m-1}{k+2}$$
shows that,
provided $\ell m \geq c$ (which holds if $r$ is sufficiently large), we have
$$
b_k(\ell, m-1) \equiv 
\modd{- \frac{(k+1)^{2\ell}}{(k+2)((k+1)^{2\ell}-1)} - \frac{m-1}{k+2}}
{(k+1)^{2c}}.
$$
Thus $b_k(\ell, m-1) \equiv \modd{0} {(k+1)^{2c}}$. Let $j = b_k(\ell, m-1) / (k+1)^{2c}$. Using 
Lemma~\ref{construction}~(i) we have
\begin{align*}
	(k+1)^{2 \ell} b_k(\ell, m - 1) &= b_k(\ell, m) - \dfrac{(k+1)^{2 \ell} - 1}{k+2} m \\
&= b_k(\ell, m) - \dfrac{(k+1)^{2 \ell} - 1}{k+2} \left(k + 1 + \dfrac{b_k(\ell,r)(k+2)}{(k+1)^{2 \ell} - 1 } \right) \\
&= b_k(\ell, m) - (k+1) \dfrac{(k+1)^{2 \ell} - 1}{k+2} - b_k(\ell,r) \\
&= b_k(\ell, m) - b_k(\ell,r) - (k+1) W_k(2 \ell) \\
&= c_k(\ell,r,m).
\end{align*}
	By Lemma~\ref{construction}~(iii) and (iv), we have 
$$
t_k(b_k(m - 1) - 1) = 2 \ \ \mbox{\rm and} \ \ t_k((k+1)^{2 \ell} b_k(\ell, m - 1) - 1)
= t_k(c_k(\ell,r,m) - 1) = 1.
$$
Thus 
$$
	t_k((k+1)^{2c}j - 1) = t_k(b_k(\ell, m-1) - 1) = 2,
$$
while
\begin{align*}
t_k((k+1)^{2c'}j - 1) &= t_k((k+1)^{2c'-2c}b_k(\ell, m-1) - 1)\\
& = t_k((k+1)^{2\ell}b_k(\ell, m-1) - 1) \\
&= t_k(c_k(\ell,r,m)-1) = 1,
\end{align*}
which shows that the sequences $(t_k((k+1)^{2c} n) - 1))_{n \geq 0}$ and $(t_k((k+1)^{2c'} n) - 1))_{n \geq 0}$ 
are distinct.
\end{proof}	
	
As recalled above, the sequence $1 \ \tau_1^{\infty}(1) = 1 \ 2 \ 1 \ 1 \ 2 \ 2 \ \cdots$ is also
the iterative fixed point of a morphism, namely $1 \ \tau_1^{\infty}(1) = \sigma_1^{\infty}(1)$ where
$\sigma_1$ is defined by $\sigma_1(1) = 121$, $\sigma_1(2) = 12221$. One can ask whether a similar 
property holds for $1 \ \tau_k^{\infty}(1)$. The following proposition answers this question.

\begin{proposition}\label{complement}
For $k \geq 1$, define the morphism $\sigma_k$ by 
$$
\sigma_k(1) = 1 \ (1^{k-1} \ 2)^k \ 1^k, \ \sigma_k(2) = 1 \ (1^{k-1} \ 2)^{2k+1} \ 1^k.
$$
Then $\sigma_k$ has an iterative fixed point that satisfies
$$
\sigma_k^{\infty}(1) = 1 \ \tau_k^{\infty}(1).
$$
\end{proposition}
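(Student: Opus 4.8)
The plan is to exhibit $\sigma_k$ as a \emph{conjugate} of $\tau_k^2$ and then read off the fixed point. First note that $\sigma_k(1) = 1\,(1^{k-1}2)^k 1^k$ begins with the letter $1$ and has length $k^2+k+1 \ge 2$, so $\sigma_k$ is prolongable at $1$; hence the iterative fixed point $\sigma_k^{\infty}(1) = \lim_n \sigma_k^n(1)$ exists and is the \emph{unique} fixed point of $\sigma_k$ beginning with $1$. It therefore suffices to check that the word $1\,\tau_k^{\infty}(1)$, which also begins with $1$, is fixed by $\sigma_k$.

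Write $A := 1^{k-1}2$ and set $W := A^k 1^k = (1^{k-1}2)^k 1^k$, so that $\sigma_k(1) = 1W$. The key step is the conjugacy identity
\begin{equation}\label{eq:conjW}
W\,\sigma_k(a) = \tau_k^2(a)\,W \qquad (a \in \{1,2\}).
\end{equation}
This is a finite computation. From the excerpt $\tau_k^2(1) = A^k 1^{k+1}$, while applying $\tau_k$ to $\tau_k(2)=A\,1^{k+1}$ gives $\tau_k^2(2) = A^k 1^{k+1} A^{k+1}$. Using $\sigma_k(2)=1A^{2k+1}1^k$ and $A^{k+1}A^{k}=A^{2k+1}$, both sides of \eqref{eq:conjW} equal $A^k 1^{k+1} A^k 1^k$ when $a=1$ and $A^k 1^{k+1} A^{2k+1} 1^k$ when $a=2$. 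Concatenating \eqref{eq:conjW} letter by letter and inducting on $|v|$ yields the telescoped identity $W\,\sigma_k(v) = \tau_k^2(v)\,W$ for every finite word $v$.

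Now apply this to the prefixes of $\mathbf{t} := \tau_k^{\infty}(1)$. Since $\mathbf{t} = \tau_k^2(\mathbf{t})$, for each prefix $v_m$ of $\mathbf{t}$ the word $\tau_k^2(v_m)$ is a prefix of $\mathbf{t}$ whose length tends to infinity, and the identity $W\,\sigma_k(v_m) = \tau_k^2(v_m)W$ shows that $W\,\sigma_k(\mathbf{t})$ and $\mathbf{t}$ agree on arbitrarily long prefixes. Hence $W\,\sigma_k(\mathbf{t}) = \mathbf{t}$, i.e. $\sigma_k(\mathbf{t}) = W^{-1}\mathbf{t}$; this is legitimate because $W$ is a prefix of $\mathbf{t}$ (as $\mathbf{t}$ begins with $\tau_k^2(1)=A^k1^{k+1}$, of which $W=A^k1^k$ is a prefix). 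The proof then closes in one line:
\[
\sigma_k(1\,\mathbf{t}) = \sigma_k(1)\,\sigma_k(\mathbf{t}) = (1W)(W^{-1}\mathbf{t}) = 1\,\mathbf{t},
\]
so $1\,\tau_k^{\infty}(1)$ is the fixed point of $\sigma_k$ beginning with $1$, which forces $\sigma_k^{\infty}(1) = 1\,\tau_k^{\infty}(1)$.

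The main obstacle is one of discovery rather than difficulty: identifying the correct conjugating word $W = (1^{k-1}2)^k 1^k$. Since $\sigma_k$ and $\tau_k^2$ share the incidence matrix $M_k^2$, one expects them to be conjugate, and comparing $\sigma_k(1)=1A^k1^k$ with $\tau_k^2(1)=A^k1^{k+1}$ suggests that $\sigma_k(a)$ is obtained from $\tau_k^2(a)$ by cyclically moving the prefix $W=A^k1^k$ to the end; the content of \eqref{eq:conjW} is precisely that the \emph{same} $W$ works for $a=2$, which must be verified by hand. The only remaining delicate point is the passage from the finite identity to the infinite word, which is dispatched by the prefix/limit argument above. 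A useful sanity check before writing the details is to confirm, via Lemma~\ref{elementary}, that $|\sigma_k^n(1)|=|\tau_k^{2n}(1)|$, so that the lengths are consistent with the claimed conjugacy.
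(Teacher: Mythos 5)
Your proof is correct and takes essentially the same approach as the paper: the conjugacy identity $W\,\sigma_k(a) = \tau_k^2(a)\,W$ with $W = (1^{k-1}2)^k 1^k$ is exactly the pair of relations the paper's proof begins with, and both arguments then telescope this over prefixes of $\tau_k^{\infty}(1)$ and pass to the limit to conclude that $1\,\tau_k^{\infty}(1)$ is fixed by $\sigma_k$. The only (welcome) addition is that you make explicit why a fixed point of $\sigma_k$ beginning with $1$ must coincide with $\sigma_k^{\infty}(1)$, a point the paper leaves implicit.
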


\begin{proof}
First we note that 
$$
	(1^{k-1} \ 2)^k	\ 1^k \ \sigma_k(1) = \tau_k^2(1) \ (1^{k-1} \ 2)^k \ 1^k \ \mbox{\rm and} \
	(1^{k-1} \ 2)^k \ 1^k \ \sigma_k(2) = \tau_k^2(2) \ (1^{k-1} \ 2)^k \ 1^k
$$
so that for words $z \in \{1, 2\}^*$ we have
$$
\sigma_k(1z) = \sigma_k(1) \sigma_k(z) = 1 \ (1^{k-1} \ 2)^k \ 1^k \ \sigma_k(z) 
= 1 \ \tau_k^2(z) \ (1^{k-1} \ 2)^k \ 1^k.
$$
Applying this equality to $z = z_k(m)$ the prefix of length $m$ of $\tau_k^{\infty}(1)$, and letting $m$ tend 
to infinity, we obtain 
$$
	\sigma_k(1 \ \tau_k^{\infty}(1)) = 1 \ \tau_k^2(\tau_k^{\infty}(1)) = 1 \ \tau_k^{\infty}(1).
$$
Hence $1 \ \tau_k^{\infty}(1)$ is a fixed point of $\sigma_k$, thus the iterative fixed point of $\sigma_k$.
\end{proof}

\subsection{Miscellanea}
\label{misc}

The sequence $\sigma_1^{\infty}(1)$ appears in several places in the literature: it was used by Brlek
\cite{Brlek} for determining the block-complexity of the Thue-Morse sequence. We have already cited 
\cite{AAS} where it is related to an Indian {\it kolam}. It also occurs in \cite{ABPS} in relation to a
piecewise affine map. Finally we want to point out that the sequence of integers $(W_k(n))_{n \geq 1}$
actually occurred (under another name) in \cite{AABBJPS}; the following result that relates the sequence 
$\tau_k^{\infty}(1)$, the sequence $(W_k(n))_{n \geq 0}$, and a ``something-free'' set, is proven there:

\begin{theorem}[\cite{AABBJPS}]
Let $\sigma_k^{\infty}(1) = (s_k(n))_{n \geq 0}$. Define $c_k(n) = \sum_{0 \leq j \leq n} s_k(n)$.
Then the sequence ${\mathbf c} = (c_k(n))_{n \geq 0}$ has the property that $n$ belongs to ${\mathbf c}$
if and only if $(k+1)n$ does not belong to ${\mathbf c}$. Furthermore it admits the following generating function
$$
\sum_{n \geq 0} c_k(n) x^n = \frac{1}{1-x} \prod_{j \geq 1} \frac{1 - x^{(k+1) W_k(j)}}{1 - x^{W_k(j)}} \cdot
$$
\end{theorem}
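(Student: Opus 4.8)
The plan is to handle the two assertions separately, reducing both to statements about the partial sums of $\tau_k^{\infty}(1)=(t_k(n))_{n\ge0}$. Write $\mathcal{C}:=\{0\}\cup\{\sum_{0\le j\le n}t_k(j):n\ge0\}$ for the set of these partial sums. Since $\sigma_k^{\infty}(1)=1\,\tau_k^{\infty}(1)$ by Proposition~\ref{complement}, we have $s_k(0)=1$ and $s_k(n)=t_k(n-1)$ for $n\ge1$, so $c_k(n)=1+\sum_{0\le j\le n-1}t_k(j)$ and therefore $\mathbf{c}=1+\mathcal{C}$. As $(c_k(n))_n$ is the sequence of partial sums of $(s_k(n))_n$, we also get $\sum_{n\ge0}c_k(n)x^n=\frac{1}{1-x}\sum_{n\ge0}s_k(n)x^n$; hence the generating-function formula is equivalent to the identity
\[
\sum_{n\ge0}s_k(n)x^n=\prod_{j\ge1}\frac{1-x^{(k+1)W_k(j)}}{1-x^{W_k(j)}},
\]
which becomes the target of the second half.

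For the complementarity property I would first prove the structural fact
\[
\mathbb{Z}_{\ge0}\setminus\mathcal{C}=\{(k+1)q+k:\ q\in\mathcal{C}\}.
\]
This rests on the self-similarity of $t_k$ under $\tau_k$. Giving each word $w\in\{1,2\}^{*}$ the weight equal to the sum of its letters, one checks $\mathrm{weight}(\tau_k(a))=(k+1)a$ for $a\in\{1,2\}$, so the decomposition $t_k=\prod_{n\ge0}\tau_k(t_k(n))$ cuts $t_k$ into blocks whose cumulative weights begin at the values $(k+1)q$, $q\in\mathcal{C}$. Inspecting the only two possible blocks $\tau_k(1)=1^{k-1}2$ and $\tau_k(2)=1^{k-1}21^{k+1}$ shows that, within the block based at $B=(k+1)q$, the partial sums of $t_k$ hit every integer of the covered interval except $B+k$. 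Since consecutive blocks tile $(0,\infty)$ contiguously, the omitted integers are exactly the numbers $(k+1)q+k$, which is the displayed equality. The property is then immediate: for $m\ge1$ one has $m\in\mathbf{c}\iff m-1\in\mathcal{C}$ and $(k+1)m\in\mathbf{c}\iff(k+1)(m-1)+k\in\mathcal{C}$, while the structural fact makes the latter membership equivalent to $m-1\notin\mathcal{C}$; hence $m\in\mathbf{c}\iff(k+1)m\notin\mathbf{c}$. (Both sides fail at $n=0$, so the statement is to be read for positive $n$.)

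For the generating-function identity I would expand each factor as $\frac{1-x^{(k+1)W_k(j)}}{1-x^{W_k(j)}}=\sum_{0\le i\le k}x^{iW_k(j)}$, so that the coefficient of $x^n$ in the product counts the representations $n=\sum_{j\ge1}i_jW_k(j)$ with every $i_j$ in $[0,k]$; call these the $[0,k]$-expansions of $n$. It then suffices to prove that $n$ has exactly $s_k(n)$ of them. By Proposition~\ref{value-1-2} applied through $s_k(n)=t_k(n-1)$, we have $s_k(n)=2$ exactly when the unique valid $W$-expansion of $n$ ends with an odd number of $0$'s, and $s_k(n)=1$ otherwise; so the goal is that $n$ admits two $[0,k]$-expansions when its valid expansion ends with an odd run of $0$'s, and only one otherwise. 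The valid expansion furnishes one. A second arises from the collision in Assertion~(iv) of Lemma~\ref{elementary}: for odd $s$ one has $W_k(s+1)=k\sum_{1\le j\le s}W_k(j)$, so a valid expansion ending in $x_{s+1}0^{s}$ with $x_{s+1}\neq0$ can be rewritten, in its low-order part, as $(x_{s+1}-1)k^{s}$, giving a second $[0,k]$-expansion with the same value.

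The hard part is the converse accounting: showing that the count never exceeds two and that an even-length run of trailing $0$'s admits no alternative expansion at all. I would carry this out by one of two routes. The first is a normalization (carry) analysis of the numeration system $(W_k(j))_j$: classify all value-preserving rewrites of $[0,k]$-digit strings and verify that they are all generated by the single collision above, so that exactly one alternative is reachable precisely in the odd-run case. The second, which I expect to be more robust, is an induction on the truncated products $F_N(x)=\prod_{1\le j\le N}\big(\sum_{0\le i\le k}x^{iW_k(j)}\big)$, using $F_{N+1}(x)=F_N(x)\sum_{0\le i\le k}x^{i W_k(N+1)}$ together with the block identity $\tau_k^{m+2}(1)=(\tau_k^{m+1}(1))^{k}(\tau_k^{m}(1))^{k+1}$ from Eq.~\eqref{astar} and the lengths $W_k(\cdot)$ of Assertion~(vii) of Lemma~\ref{elementary}, to show that the coefficients of $F_N$ reproduce $s_k(0)s_k(1)\cdots$ up to the relevant index. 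In either route the delicate points are the boundary coefficients near degree $W_k(N+1)$, where the parity correction $\eta_N\in\{0,1\}$ of Assertion~(iv) appears, and the off-by-one shift between $s_k$ and $t_k$ coming from $s_k=1\,t_k$; reconciling these with the block structure is where I expect the real work to lie.
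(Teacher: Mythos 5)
First, a point of comparison that matters here: the paper itself contains \emph{no} proof of this statement --- it is imported verbatim from \cite{AABBJPS}, and the surrounding text only records the connection to $(W_k(n))_{n\geq 0}$. So your attempt can only be judged on its own merits. On those terms, your first half is correct and complete: the reduction $\mathbf{c}=1+\mathcal{C}$ (where $\mathcal{C}$ is your set of partial sums of $t_k$ together with $0$) via Proposition~\ref{complement}, the observation that $\tau_k$ multiplies letter-sums by $k+1$, and the block-tiling argument identifying $\mathbb{Z}_{\geq 0}\setminus\mathcal{C}$ with $\{(k+1)q+k : q\in\mathcal{C}\}$ give a clean, self-contained proof of the complementarity property, including the necessary restriction to $n\geq 1$. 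Your reduction of the generating-function identity is also correct: it is equivalent to the claim that the number $\mathrm{count}(n)$ of representations $n=\sum_{j\geq 1}i_jW_k(j)$ with all $i_j\in[0,k]$ equals $s_k(n)$, and your rewrite $x_{s+1}0^{s}\mapsto(x_{s+1}-1)k^{s}$, legitimate for odd $s$ by Lemma~\ref{elementary}(iv), shows $\mathrm{count}(n)\geq 2$ whenever $s_k(n)=2$, while $\mathrm{count}(n)\geq 1$ always holds by the existence of the valid $W$-expansion.

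The genuine gap is the matching upper bound: that no integer has three representations, and that none beyond the valid one exists when the trailing run of zeros has even length. This is the entire content of the identity, and you only name two strategies (``classify all value-preserving rewrites'', ``induct on truncated products'') without carrying either out; the classification route is genuinely delicate, since one must exclude compound rewrites and rewrites built on the even-index relation $k\sum_{1\leq\ell\leq n}W_k(\ell)=W_k(n+1)-1$. The gap can, however, be closed by a short mass count that avoids any classification. Fix $N\geq 2$ even. Every representation of an integer $n<W_k(N+1)$ has $i_j=0$ for $j>N$ (since $W_k$ is non-decreasing), and by Lemma~\ref{elementary}(iv) every digit string in $[0,k]^N$ has value at most $k\sum_{1\leq j\leq N}W_k(j)=W_k(N+1)-1$; hence $\sum_{0\leq n<W_k(N+1)}\mathrm{count}(n)=(k+1)^N$. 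On the other side, the prefix of length $W_k(N+1)$ of $\tau_k^{\infty}(1)$ is $\tau_k^N(1)$ by Lemma~\ref{elementary}(vii), whose letter-sum is $(k+1)^N$ because $\tau_k$ multiplies letter-sums by $k+1$; moreover $t_k(W_k(N+1)-1)=1$, since the valid expansion $[10^N]_W$ of $W_k(N+1)$ ends in an even run of zeros (Proposition~\ref{value-1-2}). Therefore $\sum_{0\leq n<W_k(N+1)}s_k(n)=1+\bigl((k+1)^N-1\bigr)=(k+1)^N$ as well. Pointwise $\mathrm{count}(n)\geq s_k(n)$ together with equal totals forces $\mathrm{count}(n)=s_k(n)$ for all $n<W_k(N+1)$, and letting $N\to\infty$ finishes. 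With this insertion your outline becomes a complete proof; without it, the generating-function half remains unproved.
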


\section{Sum-free sets generated by certain Sturmian sequences}
\label{five}
Let $\mathbf{t}=(t_n)_{n\geq 0}$ be a sequence on $\{0,1\}$ and let $S=(s_n)_{n\geq 1}$ be the sum-free set corresponding to $\mathbf{t}$. Recall that $\mathbf{v}=(v_n)_{n\geq 1}$ is the sequence defined by Eq. \eqref{eq:v} according to $S$.

\begin{lemma}\label{star-periodic}
If $t_0=t_1=1$ and `$\,00$' does not occur in $\mathbf{t}$, then for all $n\geq1$, 
we have $v_{n}=\ast$ if and only if $n$ is even.
\end{lemma}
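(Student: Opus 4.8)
The plan is to unwind the greedy definition of $\theta$ (Section~\ref{theta-sec}) and track, position by position, how each integer is classified as a $1$, a $\ast$, or a $0$. Writing $w_i=t_{i-1}$ for the letter consumed at the $i$-th step, I would prove by induction on $m$ the invariant $Q(m)$: once positions $1,\dots,m$ have been treated, every even integer in $[2,m]$ has been recorded as a $\ast$ (that is, lies in $S+S$), every odd integer in $[1,m]$ is either in $S$ or has been declared a $0$, and $S\cap[1,m]$ consists only of odd numbers. Granting $Q(m)$ for all $m$, the lemma follows at once: $v_n=\ast$ means $n\in S+S$, which by the invariant holds for every even $n\ge 2$, while $S$ containing only odd integers forces $S+S$ to contain only even integers, so no odd $n$ is a $\ast$.

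The crucial observation that makes the induction go through is a decoupling. If $m=x+y$ with $x,y\in S$, then $x,y<m$, so the two summands already belong to $S$ by the time the process reaches $m$; hence the ``$m$ is a star'' step depends only on $\mathbf t$ and not on the dynamics of the construction. I expect this to be the one real obstacle, because a priori the star status of the even positions and the membership of the odd positions in $S$ look mutually dependent. Setting $A:=\{\,2i-1:\ t_{i-1}=1,\ i\ge 1\,\}$, the invariant will identify $S$ with $A$, and everything reduces to the purely combinatorial

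\medskip
\noindent\emph{Splitting claim:} for every $n\ge 0$ there exist $p,q\ge 0$ with $p+q=n$ and $t_p=t_q=1$.
\medskip

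\noindent Equivalently, writing an even number as $2(n+1)$ and putting $a=2p+1$, $b=2q+1$, every even $m\ge 2$ equals $a+b$ with $a,b\in A$ and $a,b<m$.

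I would settle the splitting claim by a two-line case analysis on $t_n$. If $t_n=1$, take $(p,q)=(0,n)$, which is admissible since $t_0=1$. If $t_n=0$, then $t_{n-1}=1$ because $\mathbf t$ contains no factor `$00$'; since $t_1=1$, take $(p,q)=(1,n-1)$ (valid for $n\ge 1$, while $n=0$ is handled by $(0,0)$). In either case $a+b=2(n+1)$ with $a,b\le 2n+1<2(n+1)$, so every even $m\ge 2$ is represented as required.

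Finally I would carry out the induction on $m$. If $m$ is odd then $m\notin S+S$ (as $S\cap[1,m-1]$ contains only odd integers, so $S+S$ contains only even ones), hence $m$ is the least unclassified position; the construction consults the next letter $t_{i-1}$ with $m=2i-1$ and places $m$ in $S$ precisely when $t_{i-1}=1$, keeping $S$ inside the odd integers and yielding $S\cap[1,m]=A\cap[1,m]$. If $m$ is even, the splitting claim produces $a,b\in A\cap[1,m-1]=S\cap[1,m-1]$ with $a+b=m$, so $m\in S+S$ is recorded as a $\ast$. Both cases preserve $Q$; the hypotheses $t_0=t_1=1$ supply the base data ($s_1=1$, $2=1+1\in S+S$, $s_2=3$), so $Q(m)$ holds for all $m$ and the lemma is proved.
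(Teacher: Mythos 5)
Your proof is correct and follows essentially the same route as the paper's: an induction along positions in which odd positions cannot be stars by parity (since $S$ contains only odd numbers), and each even position $m$ is decomposed as $1+(m-1)$ or $3+(m-3)$ according to whether the governing letter of $\mathbf{t}$ is $1$ or $0$ --- which is exactly the paper's case analysis on $v_{2k-1}$, using $t_0=t_1=1$ and the absence of `$00$'. Your explicit invariant $S=\{2i-1 \mid t_{i-1}=1\}$, the decoupling observation, and the splitting claim phrased over $\mathbf{t}$ are just cleaner bookkeeping for what the paper does implicitly with the sequence $\mathbf{v}$.
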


\begin{proof}
Note that $t_0=t_1=1$. By the construction of $(v_n)_{n\geq 1}$, we have
\[
  \begin{array}{c|cccccc}
    n & 1 & 2 & 3 & 4 & 5 & 6  \\
    \hline
    v_n & 1 & \ast & 1 & \ast &  & \ast \
  \end{array}\cdot
\]
When $t_2=1$, then $v_5=1$ and $v_8=v_{10}=\ast$. We have $s_1=1$, $s_2=3$ and $s_3=5$.
When $t_2=0$, since `00' does not occur in $(t_n)_{n\geq0}$, we have $t_3=1$. Hence $v_5=0$, $v_7=1$, and $v_8=v_{10}=\ast$. We have $s_1=1$, $s_2=3$ and $s_3=7$. So the result holds for all $n\leq 6$. Assume that the result holds for all $m < n$. We prove it for $m=n$.
\medskip

\noindent\textit{Case 1}: $n=2k$. If $v_{2k-1}=1$, then $v_{2k}=\ast$ since $v_{1}=1$. If $v_{2k-1}=0$, then $v_{2k-2}=\ast$ by the inductive hypothesis and $v_{2k-3}=1$ since `00' does not occur. Note that $v_{3}=1$, we also have $v_{2k}=\ast$.
\medskip

\noindent\textit{Case 2}: $n=2k+1$. By the inductive hypothesis, for all $\ell\leq 2k$, if $v_{\ell}=1$, then $\ell$ is odd. If $v_{2k+1}=\ast$, then there exist $i,j\leq 2k$ such that $v_{i}=v_{j}=1$ and $2k+1=i+j$. This contradicts to the fact that both $i$ and $j$ are odd. So $v_{2k+1}\neq \ast$.

Now we see the result is valid for $n$ and our lemma follows from induction.
\end{proof}
\begin{remark}
Under the assumption of Lemma \ref{star-periodic}, we see that $(s_n)_{n\geq 1}$ are odd numbers.
\end{remark}

Now we shall discuss the sum-free set $S=(s_{n})_{n\geq 1}$ generated by a Sturmian sequence 
$\mathbf{t}$ with $t_0=t_1=1$. Note that `$\,00$' cannot occur in $\mathbf{t}$ (namely, a Sturmian
sequence has $\ell + 1$ factors of length $\ell$; since it is not periodic, it always contain `$\,01$' 
and `$\,10$', thus if `$11$' is a factor, the Sturmian sequence has exactly $3$ factors of length $2$, 
i.e., `$00$', `$01$' and `$10$'). Let \[(d_n)_{n\geq 1}:=(s_{n+1}-s_{n})_{n\geq 1}\] be the difference sequence of $S$. It is interesting to see that the difference sequence is still a Sturmian sequence. We restate our 
Theorem \ref{thm:D} as follow.

\begin{resultD}
If $\mathbf{t}$ is a Sturmian sequence with $t_0=t_1=1$, then the sequence $(d_n)_{n\geq 1}$ is a 
Sturmian sequence. 
\end{resultD}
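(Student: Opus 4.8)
The plan is to reduce the statement to the classical fact that the sequence of gaps between consecutive $1$'s in a Sturmian word is again Sturmian. First I would use Lemma~\ref{star-periodic}: since $\mathbf{t}$ begins with $11$ and contains no `$00$', the stars sit exactly at the even positions of $\mathbf{v}$, so every element of $S$ is odd and each difference $d_n=s_{n+1}-s_n$ is even. Fixing $n$ and examining the integers strictly between the consecutive odd values $s_n$ and $s_{n+1}$, the even ones are stars and the odd interior ones are $0$'s; counting them gives $\alpha_n=\mu_n+1$. Substituting into Eq.~\eqref{eq:sma} yields
$$d_n=\mu_n+\alpha_n+1=2(\mu_n+1),$$
where $\mu_n$ is the number of $0$'s between the $n$-th and $(n+1)$-th occurrence of $1$ in $\mathbf{t}$ (recall that $\mathbf{v}'=\mathbf{t}$). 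Because `$00$' is forbidden, $\mu_n\in\{0,1\}$, so $(d_n)_{n\ge1}$ is the image of $(\mu_n)_{n\ge1}$ under the injective coding $0\mapsto2,\ 1\mapsto4$. An injective letter-to-letter coding preserves subword complexity, hence it suffices to prove that $(\mu_n)_{n\ge1}$ is Sturmian.

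For that I would pass to the mechanical-word description of $\mathbf{t}$. Writing $\mathbf{t}$ as a lower mechanical word of irrational slope $\alpha$ equal to the frequency of $1$, the hypothesis that `$11$' occurs while `$00$' does not forces $\alpha\in(1/2,1)$. The counting function $N\mapsto\#\{i<N:t_i=1\}$ telescopes to $\lfloor N\alpha+\rho\rfloor-\lfloor\rho\rfloor$, and inverting this Beatty-type relation shows that the positions $(s'_n)_n$ of the $1$'s form a Beatty sequence of slope $1/\alpha$, i.e. $s'_n=\lfloor n/\alpha+\rho'\rfloor$ for a suitable phase $\rho'$. Consequently the gap sequence
$$g_n=s'_{n+1}-s'_n=\big\lfloor (n+1)/\alpha+\rho'\big\rfloor-\big\lfloor n/\alpha+\rho'\big\rfloor$$
is itself a mechanical word, of slope $1/\alpha\in(1,2)$, hence valued in $\{1,2\}$. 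Subtracting $1$ gives $\mu_n=g_n-1$, a mechanical word of irrational slope $1/\alpha-1\in(0,1)$; since a mechanical word of irrational slope is Sturmian, $(\mu_n)_{n\ge1}$ is Sturmian, and we are done.

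I expect the main obstacle to be the second step, namely the careful justification that the positions of the $1$'s in a Sturmian word are exactly a Beatty sequence of slope $1/\alpha$ (equivalently, that the gap sequence of a Sturmian word is Sturmian). The delicate points are keeping track of the floor conventions and the phase $\rho'$ when inverting the counting function, confirming that the resulting slope $1/\alpha-1$ is irrational (which is immediate from the irrationality of $\alpha$), and using the `$11$'/`no $00$' hypothesis to guarantee $\alpha\in(1/2,1)$, so that the gaps lie in $\{1,2\}$ and no value $\mu_n\notin\{0,1\}$ can occur. Aperiodicity of $(\mu_n)$ is not a genuine issue: an ultimately periodic $(\mu_n)$ would make $\mathbf{t}$ ultimately periodic, contradicting that $\mathbf{t}$ is Sturmian. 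As an alternative to the mechanical-word route one could instead verify directly that $(\mu_n)$ is balanced and aperiodic and invoke the balanced-aperiodic characterization of Sturmian words; however, transferring the balance property across the desubstitution $\phi:0\mapsto1,\ 1\mapsto10$ (where $\mathbf{t}=\phi((\mu_n))$) needs a sharper argument than a naive window comparison, so I would favor the Beatty-sequence computation.
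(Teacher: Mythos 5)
Your proposal is correct, and its reduction is identical to the paper's: both use Lemma~\ref{star-periodic} to place the stars at exactly the even positions, deduce $\alpha_n=\mu_n+1$, hence $d_n=\mu_n+\alpha_n+1=2(\mu_n+1)\in\{2,4\}$ by Eq.~\eqref{eq:sma}, and then note that an injective letter-to-letter coding preserves subword complexity, so everything rests on showing that the gap sequence $\mathbf{u}=(\mu_n)_{n\geq 1}$ is Sturmian. The divergence is in how that key fact is established. The paper writes $\mathbf{t}=\varphi(\mathbf{u})$ with $\varphi\colon 0\to 1,\ 1\to 10$ and cites Lothaire's Corollary~2.3.3 for the desubstitution statement that a word whose image under $\varphi$ is Sturmian is itself Sturmian; this is legitimate because $\varphi$ is a Sturmian morphism (a composition of the exchange $E$ with the elementary morphism $0\to 01,\ 1\to 0$), and for Sturmian morphisms the equivalence ``$\mathbf{u}$ Sturmian $\Leftrightarrow$ $\varphi(\mathbf{u})$ Sturmian'' is part of the packaged theory. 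So the desubstitution route you set aside as needing ``a sharper argument'' is exactly the paper's route, with the sharpness outsourced to the theory of Sturmian morphisms rather than to a hand-made balance transfer. Your mechanical-word argument is a valid, more self-contained alternative: the slope computation is right (writing $1/\alpha=1+\gamma$, the $-1$ in $\mu_n=g_n-1$ absorbs the integer part of the slope, leaving a mechanical word of irrational slope $\gamma=1/\alpha-1$), and it yields strictly more than the paper's proof, namely that $(d_n)_{n\geq 1}$ is the coding under $0\to 2,\ 1\to 4$ of a mechanical word of slope $1/\alpha-1$, from which one reads off the frequencies of $2$ and $4$ in the difference sequence. The cost is the floor-and-phase bookkeeping in inverting the counting function to get the Beatty description of the positions of the $1$'s; if you flesh out your sketch, that inversion is the one step you must either write out carefully or cite, since it too is classical.
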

\begin{proof}
Note that $\mu_n$ is the number of zeros between the $n$-th and the $(n+1)$-th occurrences of 
`$1$' in $\mathbf{t}$. Write $\mathbf{u}:=(\mu_n)_{n\geq 1}$. Since $\mathbf{t}$ is a Sturmian sequence in which `$00$' does not occur (see above), we have $\mu_n \in \{0,1\}$. Moreover, 
\[\mathbf{t}=\varphi(\mathbf{u})\]
where $\varphi$ is the morphism $0\to 1$ and $1\to 10$. By \cite[Corollary 2.3.3]{M02}, we know that $\mathbf{u}$ is also a Sturmian sequence.

 From Lemma \ref{star-periodic}, we obtain that for all $n\geq 1$,
\[
\alpha_n=\begin{cases}
1, & \text{ if }\mu_n=0,\\
2, & \text{ if }\mu_n=1.
\end{cases}
\]
Then by Eq. \eqref{eq:sma}, for all $n\geq 1$,
\[
d_n=\mu_{n}+\alpha_{n}+1=2(\mu_{n}+1)\in\{2,4\}.
\]
This implies that the difference sequence $(d_{n})_{n\geq 1}$ is the image of $(\mu_{n})_{n\geq 1}$ under the coding $0\to 2$ and $1\to 4$. So $(d_{n})_{n\geq 1}$ is a Sturmian sequence.
\end{proof}

\section{Subword complexity}
\label{six}

We close with a conjecture about the subword complexity of the 
infinite fixed points of the morphisms $\tau_k$.  
The subword complexity is the function
counting the number of distinct factors of length $n$.

\begin{conjecture}
Let $(f_n)_{n \geq 1}$ be the subword complexity of $\tau_k^\infty(1)$,
and define $d_n = f_{n+1} - f_n$ for $n \geq 1$.
Then $d_1 d_2 d_3 \cdots = 1^{a_1} 2^{a_2} 1^{a_3} 2^{a_4} \cdots,$
where
\begin{align*}
a_1 &= k-1; \\
a_2 &= k; \\
a_{2n} &= a_{2n-1} + a_{2n-2}, \quad n \geq 2; \\
a_{2n+1} &= k a_{2n} + k (-1)^n, \quad n \geq 1.
\end{align*}
\end{conjecture}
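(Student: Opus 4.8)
The plan is to translate the statement about the complexity $(f_n)$ into a statement about special factors. Write $\mathbf t=\tau_k^{\infty}(1)=(t_k(n))_{n\ge 0}$ over the binary alphabet $\{1,2\}$, and for a factor $w$ let $r(w)$ (resp.\ $\ell(w)$) be the number of letters $a$ with $wa$ (resp.\ $aw$) a factor, and $b(w)$ the number of pairs $(a,c)$ with $awc$ a factor. Since $\mathbf t$ is aperiodic and binary, $d_n=f_{n+1}-f_n$ is exactly the number of right-special factors of length $n$ (those with $r(w)=2$), so the conjecture asserts that there are one or two of them at each length, in the prescribed runs. Passing to the second difference, the standard bispecial formula gives
\[
f_{n+2}-2f_{n+1}+f_n=\sum_{\substack{w\ \text{bispecial}\\ |w|=n}} m(w),\qquad m(w)=b(w)-\ell(w)-r(w)+1,
\]
where on a binary alphabet $m(w)$ lies in $\{-1,0,1\}$, equal to $+1,0,-1$ according as $w$ is strong, neutral, or weak. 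Writing $A_m=a_1+\cdots+a_m$, the conjecture becomes: the net weight $\sum_{|w|=n}m(w)$ equals $+1$ for $n=A_{2i-1}$, equals $-1$ for $n=A_{2i}$, and equals $0$ otherwise. Two elementary facts about $\mathbf t$, read off the blocks $\tau_k(1)=1^{k-1}2$ and $\tau_k(2)=1^{k-1}21^{k+1}$, drive everything: the maximal run of $1$'s has length $2k$, and two consecutive $2$'s are separated by exactly $k-1$ or $2k$ ones. From these I would check by hand that $1^{k-1}$ is strong bispecial of length $A_1=k-1$ and that $1^{2k-1}$ is weak bispecial of length $A_2=2k-1$, which anchors the base of the induction.

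The engine of the proof is the recognizability (circularity) of $\tau_k$. Since $M_k$ is primitive and $\tau_k$ is injective, every sufficiently long factor $w$ of $\mathbf t$ admits a unique desubstitution $w=s\,\tau_k(v)\,p$, where $v$ is a factor, $s$ is a proper suffix of some $\tau_k(a)$, and $p$ a proper prefix of some $\tau_k(c)$. I would use this to transport bispecial factors across scales: up to a finite, explicitly listed set of short exceptions, every bispecial factor $w$ is the synchronized extension of $\tau_k(v)$ for a uniquely determined shorter bispecial factor $v$, and the bilateral-extension data $(\ell,r,b)$ of $w$ is determined by that of $v$ together with the fixed boundary words $s,p$. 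This yields a bijection between bispecial factors of $\mathbf t$ (beyond the exceptions) and bispecial factors of smaller length, hence a recursion on the critical lengths $A_m$, together with a rule permuting the type (strong/neutral/weak). Because $s,p$ are fixed, the length increment from $v$ to $w$ equals $|\tau_k(v)|-|v|$ plus a constant, and the identity $|\tau_k^n(1)|=W_k(n+1)$ from Lemma~\ref{elementary}(vii) feeds the numbers $W_k$ into the $A_m$.

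With the transport map in hand the final step is arithmetic. Writing $A_m=A_{m-1}+a_m$, the relation $a_{2n}=a_{2n-1}+a_{2n-2}$ should reflect that a run of $2$'s is assembled from the contributions of a strong ancestor and the preceding weak one (an additive, Fibonacci-like step with no boundary correction), while $a_{2n+1}=k\,a_{2n}+k(-1)^n$ reflects the $k$-fold block repetition in $\tau_k^{m+2}(1)=(\tau_k^{m+1}(1))^k(\tau_k^m(1))^{k+1}$ from Eq.~\eqref{astar}, the term $k(-1)^n$ being precisely the parity-dependent boundary correction recorded in Lemma~\ref{elementary}(iv), where $k\sum_{\ell\le n}W_k(\ell)$ equals $W_k(n+1)$ or $W_k(n+1)-1$ according to the parity of $n$. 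I would close by checking that the short exceptional bispecials give the initial values $a_1=k-1$, $a_2=k$, and that the strong/weak alternation of ancestors is preserved, so that the $+1/-1$ pattern of the net weight lands exactly at the $A_m$.

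The hard part will be the second paragraph: making the desubstitution of bispecial factors genuinely bijective and controlling $b(w)$ through the boundary words. Neutral bispecial factors do occur (for $k=2$ the word $121$ is neutral of length $3$, alongside the weak $111$), so one must show they are either transported to neutral ones or cancel in the net weight, and thus never disturb the $\pm1$ contributions at the critical lengths; this is the combinatorial heart of the argument. Moreover the boundary increments $s,p$ depend on the letters $a,c$ flanking $v$, and tracking these through the $2k+1$ blocks of Eq.~\eqref{astar} is exactly what produces the sign $k(-1)^n$, whose correctness for every $n$ rests on Lemma~\ref{elementary}(i),(iv). A safer if less conceptual route, should the transport prove unwieldy, is to define candidate bispecial words $B_m$ explicitly from the iterates $\tau_k^{j}(1)$, verify directly via Proposition~\ref{value-1-2} that each is bispecial of the claimed type and length, and then establish completeness by bounding $\sum_{|w|=n}|m(w)|$ to show no other factor contributes to the net weight.
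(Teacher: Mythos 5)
First, a point of context: the statement you are addressing is stated in the paper only as a \emph{conjecture} (Section~\ref{six}); the authors offer no proof of it, so there is no paper argument to compare yours against, and your proposal must stand entirely on its own. As it stands, it is a research plan rather than a proof. The framework you choose is the right one and your anchor computations are correct: for a uniformly recurrent binary sequence $f_{n+1}-f_n$ does count the right-special factors of length $n$; Cassaigne's bilateral formula $f_{n+2}-2f_{n+1}+f_n=\sum_{|w|=n}m(w)$ is valid here because $\tau_k^\infty(1)$, being the fixed point of a primitive morphism, is uniformly recurrent; the maximal run of $1$'s really has length $2k$ and the gaps between consecutive $2$'s really are $k-1$ or $2k$; and your classification of $1^{k-1}$ as strong bispecial and $1^{2k-1}$ as weak bispecial checks out, matching the critical lengths $A_1=k-1$ and $A_2=2k-1$.

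Everything after this base case, however, is deferred, and the deferred part is precisely the theorem. The transport of bispecial factors across one level of desubstitution is asserted, not constructed: you do not determine the boundary words $s,p$, you do not exhibit the ``finite, explicitly listed set of short exceptions,'' and --- as you yourself concede --- you do not show that neutral bispecial factors map to neutral ones or otherwise cancel in the net weight. Since the conjecture's entire content is the location and sign of the nonzero contributions $m(w)$, a single uncontrolled neutral-to-strong or neutral-to-weak transition at an unexpected length falsifies the claimed run structure. Likewise, the two recursions $a_{2n}=a_{2n-1}+a_{2n-2}$ and $a_{2n+1}=k\,a_{2n}+k(-1)^n$ are never derived from the transport map; you say only that they ``should reflect'' the block structure of Eq.~\eqref{astar} and the parity correction in Lemma~\ref{elementary}(iv). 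The length bookkeeping $|w|=|\tau_k(v)|+|s|+|p|$, iterated along the tower of desubstitutions, is exactly where the $W_k(n)$ arithmetic and the sign $(-1)^n$ must emerge, and none of that computation is carried out. Your fallback route (guessing explicit bispecial words $B_m$ and proving completeness by bounding $\sum_{|w|=n}|m(w)|$) has the same status: completeness is the hard part, not the verification. In short, you have identified the standard and very plausibly correct toolkit --- Cassaigne-style bispecial analysis combined with recognizability of the primitive morphism $\tau_k$ --- but what you have written leaves the conjecture exactly as open as the paper leaves it.
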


For example, consider the case $k = 3$.  Then 
\begin{align*}
(f_n)_{n \geq 1} &= (2,3,4,6,8,10,11,12,13,14,15,16,18,20,22,\ldots),\\
(d_n)_{n \geq 1} &= (1,1,2,2,2,1,1,1,1,1,1,2,\ldots),\\
(a_n)_{n \geq 1} &= (2,3,6,9,30,39,114,153,\ldots).
\end{align*}

\section*{Acknowledgements}
The research of Z.-X. Wen was partially supported by NSFC (Nos.~11431007, 11871295). The research of W. Wu was partially supported by China Scholarship Council (File No.~201906155024) and Guangdong Natural Science Foundation (Nos.~2018B0303110005, 2018A030313971).

\end{document}